\newtheorem{theorem}{Theorem}[section]
\newtheorem{lemma}[theorem]{Lemma}
\newtheorem{assumption}[theorem]{Assumption}
\newtheorem{corollary}[theorem]{Corollary}
\newtheorem{remark}[theorem]{Remark}
\let\originalleft\left
\let\originalright\right
\renewcommand{\left}{\mathopen{}\mathclose\bgroup\originalleft}
\renewcommand{\right}{\aftergroup\egroup\originalright}
\renewcommand{\d}{\/\mathrm{d}\/}
\def\w{\textbf{W}^{\varepsilon}_{{\theta}^{\varepsilon}}}
\def\L{\mathbb{L}}
\def\A{\mathrm{A}}
\def\C{\mathrm{C}}
\def\f{\boldsymbol{f}}
\def\B{\mathrm{B}}
\def\D{\mathrm{D}}
\def\y{\boldsymbol{y}}
\def\z{\boldsymbol{z} }
\def\v{\boldsymbol{v}}
\def\w{\boldsymbol{w}}
\def\W{\mathrm{W}}
\def\N{\mathbb{N}}
\def\V{\mathbb{V}}
\def\wi{\widetilde}
\def\u{\mathrm{U}}
\def\u{\boldsymbol{u}}
\def\H{\mathbb{H}}
\newcommand{\R}{\mathbb{R}}
\renewcommand{\d}{\/\mathrm{d}\/}
\newcommand{\Addresses}{{% additional braces for segregating \footnotesize
		\footnote{
			%	\footnotesize
			\noindent \textsuperscript{1,2}Department of Mathematics, Indian Institute of Technology Roorkee-IIT Roorkee,
			Haridwar Highway, Roorkee, Uttarakhand 247667, INDIA.\par\nopagebreak
			\noindent  \textit{e-mail:} \texttt{Manil T. Mohan: maniltmohan@ma.iitr.ac.in, maniltmohan@gmail.com.}
			
			\textit{e-mail:} \texttt{Kush Kinra: kkinra@ma.iitr.ac.in.}
			
			\noindent \textsuperscript{*}Corresponding author.
			
			\textit{Key words:} Global pullback attractor, random pullback attractor, upper semicontinuity, convective Brinkman-Forchheimer equations, unbounded domains.
			
			Mathematics Subject Classification (2020): Primary 35B41, 35Q35; Secondary 37L55, 37N10, 35R60.

}}}
\begin{document}
	%	\linenumbers
	
	\title[Pullback attractors of CBF and SCBF on $\mathbb{R}^d$]{Existence and upper semicontinuity of random pullback attractors for 2D and 3D non-autonomous stochastic convective Brinkman-Forchheimer equations on whole domain
		\Addresses}
	
	\author[K. Kinra and M. T. Mohan]
	{Kush Kinra\textsuperscript{1} and Manil T. Mohan\textsuperscript{2*}}

	\maketitle
	
	\begin{abstract}
		In this work, we analyze the long time behavior of 2D as well as 3D convective Brinkman-Forchheimer (CBF) equations and its stochastic counter part with non-autonomous deterministic forcing term in $\R^d$ $ (d=2, 3)$:
		$$\frac{\partial\boldsymbol{u}}{\partial t}-\mu \Delta\boldsymbol{u}+(\boldsymbol{u}\cdot\nabla)\boldsymbol{u}+\alpha\boldsymbol{u}+\beta|\boldsymbol{u}|^{r-1}\boldsymbol{u}+\nabla p=\boldsymbol{f},\quad \nabla\cdot\boldsymbol{u}=0,$$ where $r\geq1$. We prove the existence of a unique global pullback attractor for non-autonomous CBF equations, for $d=2$ with $r\geq1$, $d=3$ with $r>3$ and $d=r=3$ with $2\beta\mu\geq1$. For the same cases, we show the existence of a unique random pullback attractor for non-autonomous stochastic CBF equations with multiplicative white noise.  Finally, we establish the upper semicontinuity of the random pullback attractor, that is, the random pullback attractor converges towards the global pullback attractor when the noise intensity approaches to zero. Since we do not have compact Sobolev embeddings on unbounded domains, the pullback asymptotic compactness of the solution is proved by the method of energy equations given by Ball. For the case of Navier-Stokes equations defined on $\R^d$, such results are not available and the presence of Darcy term $\alpha\boldsymbol{u}$ helps us to establish the above mentioned results for CBF equations. 
	\end{abstract}

	\section{Introduction} \label{sec1}\setcounter{equation}{0}
	The present work is devoted to the asymptotic analysis of the mathematical model concerning convective Brinkman-Forchheimer (CBF) equations (or damped Navier-Stokes equations (NSE)) with non-autonomous forcing term (deterministic term) defined on whole domain. We establish the existence of a global pullback attractor for non-autonomous CBF equations and also prove the existence and upper semicontinuity of random pullback attractors for its stochastic counter part. Given $\tau\in\R$, we consider non-autonomous CBF equations in $\R^d\ (d=2,3)$, which describe the motion of incompressible fluid flows in a saturated porous medium, as (\cite{PAM}):
	\begin{equation}\label{1}
		\left\{
		\begin{aligned}
			\frac{\partial \u}{\partial t}-\mu \Delta\u+(\u\cdot\nabla)\u+\alpha\u+\beta|\u|^{r-1}\u+\nabla p&=\boldsymbol{f}, \ \ \ \ \ \ \  \text{ in }\  \R^d\times(\tau,\infty), \\ \nabla\cdot\u&=0, \ \ \ \ \ \ \ \  \text{ in } \ \ \R^d\times(\tau,\infty), \\
			\u(x,\tau)&=\u_0(x), \ \  \ x\in \R^d \ \text{ and }\ \tau\in\R,\\
			\u(x,\tau)&\to 0\ \ \ \ \ \ \ \ \  \text{ as }\ |x|\to\infty, 
		\end{aligned}
		\right.
	\end{equation}
	where $\u(x,t) :	\R^d\times(\tau,\infty)\to \R^d$ stands for the velocity field, $p(x,t):	\R^d\times(\tau,\infty)\to\R$ denotes the pressure field and $\f(x,t):	\R^d\times(\tau,\infty)\to \R^d$ is an external forcing. The constant $\mu>0$ stands for the \emph{Brinkman coefficient (effective viscosity)}, the constants $\alpha, \beta>0$ represent the \emph{Darcy (permeability of porous medium)} and \emph{Forchheimer coefficients}, respectively. The \emph{absorption exponent} $r\in[1,\infty)$ and  $r=3$ is called the \emph{critical exponent}. One can also consider the system \eqref{1} as a damped NSE (with $\alpha\u$ as linear damping and $\beta|\u|^{r-1}\u$ as nonlinear damping), since $\alpha=\beta=0$ recovers the classical NSE. Furthermore, the critical homogeneous CBF equations (\eqref{1} with $r=3$) and NSE have the same scaling (Proposition 1.1, \cite{HR}) only when $\alpha=0$ but no scale invariance property for other values of $\alpha$ and $r$. Therefore it is sometimes called NSE modified by an absorption term (\cite{SNA}) or the tamed NSE (\cite{MRXZ}). The CBF equations \eqref{1} is applied to flows when the velocities are sufficiently high and porosities are not too small, that is, when the Darcy law for a porous medium does not apply, that is, \emph{non-Darcy models} (see \cite{PAM} for more details).

	Let us now discuss some of the global solvability results available in the literature for 3D CBF equations. The Cauchy problem for CBF equations with $\alpha=0$ and $\beta=r$ is considered in \cite{ZCQJ}. The authors in \cite{ZCQJ} showed that the Cauchy problem for 3D CBF equations has global weak solutions, for any $r\geq 1$, global strong solutions, for any $r\geq 7/2$ and that the strong solution is unique, for any $7/2\leq r\leq 5$. An improvement for this result was made in \cite{ZZXW} by showing that the system \eqref{1} possesses global strong solutions, for any $r>3$ and the strong solution is unique, when $3<r\leq 5$. Later, the authors in \cite{YZ}  proved that the strong solution exists globally for $r\geq 3$, and they established two regularity criteria, for $1\leq r<3$. Moreover, for any $r\geq 1$, they proved that the strong solution is unique even among weak solutions.  The existence of unique weak as well as strong solutions for deterministic CBF equations defined on bounded and periodic domains can be obtained from \cite{SNA,FHR,HR,PAM,MTM}, etc. 
	
	Next, we list some literature on the  stochastic counterpart of the system \eqref{1} and related models. The existence of a unique strong solution  to stochastic tamed 3D NSE defined on the whole space is proved in \cite{MRXZ1}.  An improvement for their  result  for a slightly simplified system is obtained in \cite{ZBGD}. The local and global existence and uniqueness of solutions for a general class of deterministic and stochastic nonlinear evolution equations with coefficients satisfying some local monotonicity and generalized coercivity conditions is proved in \cite{WLMR}. The existence and uniqueness of strong solutions for a large class of stochastic partial differential equations (SPDEs), where the coefficients satisfy the local monotonicity and Lyapunov condition, is showed \cite{WL},  and the author provided the stochastic tamed 3D NSE as an example. The global solvability of 3D NSE in the whole space with a Brinkman-Forchheimer type term subject to an anisotropic viscosity and a random perturbation of multiplicative type is described in the work \cite{HBAM}. The existence of strong solutions (in the probabilistic sense) and martingale solutions for stochastic CBF equations on bounded domains are proved in \cite{MTM1} and \cite{LHGH1}, respectively. The existence of a unique pathwise strong solution to 3D stochastic NSE is a well known open problem, and the same is open for 3D stochastic CBF equations with  $r\in[1,3)$. Therefore, we restrict ourselves to $d=2$ with $r\geq1$ for any $\mu, \beta>0$, $d=3$ with $r>3$ for any $\mu, \beta>0$ and $d=r=3$ for $2\beta\mu\geq1$ (for more details, see \cite{MTM1}).
	
	For the dynamical systems generated by fluid flow models, the theory of attractors have great importance in the  study of asymptotic behaviors and their qualitative properties. For a comprehensive study on the theory of global attractors for autonomous dynamical systems, the interested readers are referred to see \cite{JCR,R.Temam} etc. The concept of global pullback attractors for non-autonomous dynamical systems using pullback asymptotically compactness was first introduced in \cite{CLR}, which is an extension of the similar concepts in the autonomous framework. Moreover, the authors in \cite{CLR} applied this theory to 2D non-autonomous NSE on some unbounded domains. There is a good number of literature available on the theory of attractors for autonomous (see \cite{KM3,MTM2,Rosa}, etc) and non-autonomous dynamical systems (see \cite{CLR1,CLR2,LW,ZXZZ} etc and the references therein).
	
	It is well-investigated in the literature that a large class of SPDEs generate random dynamical systems (RDS) (cf. \cite{Arnold}). The fundamental theory of attractors for infinite dimensional RDS was established in \cite{BCF,CDF,CF}, etc and several authors applied this theory to various physically relevant SPDEs, see for example \cite{FY,FS,KM3,LG,You}, etc and the references therein. In particular, the work \cite{KM}, etc discussed random attractors for autonomous stochastic CBF equations on some unbounded domains like Poincar\'e domains. Furthermore, the existence of weak pullback mean random attractors for non-autonomous stochastic CBF equations was proved recently in \cite{KM4}. To the best of authors' knowledge,  results on the existence of pullback random attractors for non-autonomous stochastic NSE or CBF equations on whole domains is not available in literature. In this work, we resolve this problem for stochastic CBF equations  with the help of Darcy term $\alpha\u$ and the abstract theory developed in  \cite{SandN_Wang}. Necessary and sufficient criteria for the existence of periodic random pullback attractors for non-autonomous non-compact RDS is presented in \cite{SandN_Wang}. Moreover, the author  proved the existence of a unique random pullback attractor for non-autonomous stochastic Reaction-Diffusion equations on whole domains as an application of this theory. Using this theory, the periodic pullback random attractor for stochastic NSE with time dependent deterministic term on Poincar\'e domains is established in \cite{PeriodicWang}. For more applications of this theory, the interested readers are referred to see  \cite{CZ,GGW,HZ,Wang,RB1,ZZ}, etc.
	
	In this work, we consider the stochastic CBF equations perturbed by multiplicative white noise of  the form $\varepsilon\u\circ\frac{\d \W(t)}{\d t}$, where  $\varepsilon>0$, $\W(\cdot)$ be the two-sided Wiener process on the probability space $(\Omega,\mathscr{F},\mathbb{P})$ and $\circ$ means that the stochastic integral is understood in the  sense of Stratonovich (see section \ref{sec4}). The \emph{stability of the random pullback attractors under stochastic perturbations} is one of the important properties of the random pullback attractors, that is, the convergence of random pullback attractors	$\mathscr{A}_{\varepsilon}(\tau,\omega)$ towards the deterministic global pullback attractor $\mathscr{A}_0(\tau)$, when the noise intensity $\varepsilon$ approaches to zero. For a metric space $(\mathbb{X},d)$, there are two types of convergences,
	\vskip 1mm
	\noindent
	\begin{align*}
		\lim\limits_{\varepsilon\to0^{+}} \text{dist}_{\mathbb{X}}\left(\mathscr{A}_{\varepsilon}(\tau,\omega),\mathscr{A}_0(\tau)\right)=0\ \  \text{ and }\ \  \lim\limits_{\varepsilon\to0^{+}} \text{dist}_{\mathbb{X}}\left(\mathscr{A}_0(\tau),\mathscr{A}_{\varepsilon}(\tau,\omega)\right)=0,
	\end{align*}
	\vskip 1mm
	\noindent
	which is called \emph{upper semicontinuity} and \emph{lower semicontinuity}, respectively. Here, $\text{dist}_{\mathbb{X}}(\cdot,\cdot)$ denotes the Hausdorff semi-distance between two non-empty subsets of $\mathbb{X}$, that is, for non-empty sets $A,B\subset \mathbb{X}$ $$\text{dist}_{\mathbb{X}}(A,B)=\sup_{a\in A}\inf_{b\in B} d(a,b).$$ 
	For lower semicontinuity, we need a more detailed study either on the equi-attraction of the family of the random attractors of perturbed systems or on the structure of the deterministic attractors. Therefore we prove the results of upper semicontinuity only in this work and the lower semicontinuity will be addressed in a future work. Abstract results on upper semicontinuity of the random attractors for autonomous compact RDS was introduced in \cite{CLR_USC}. Later, the author in \cite{Wang} presented the theory of upper semicontinuity for autonomous non-compact RDS also. For upper semicontinuity of random attractors for autonomous stochastic models on bounded and unbounded domains, see \cite{XJXD,KM1,LF,Wang,ZW}, etc and the references therein. In particular, upper  and lower semicontinuity results for stochastic NSE and stochastic CBF equations on periodic domains is presented in \cite{HCPEK} and \cite{KM5}, respectively. In \cite{non-autoUpperWang}, the author extended the results on upper semicontinuity of the works \cite{CLR_USC,Wang} to  non-autonomous RDS (compact as well as non-compact), and it has been applied in the works  \cite{CZ,RB1,ZZ}, etc. 
	
	We fix  $r\in[1,\infty)$ with any $\mu,\beta>0$ in 2D, and $r\in(3,\infty)$ with any $\mu,\beta>0$  and $r=3$ with $2\mu\beta\geq1$ in 3D. For  2D and 3D non-autonomous CBF equations on whole domains, the major  three aims of this article are as follows:   For $\f\in\mathrm{L}^2(0,T;\V')$, 
	\begin{itemize}
		\item [(i)] the existence of a unique \emph{global pullback attractor},
		\item[(ii)] the existence of a unique \emph{random pullback attractor},
		\item[(iii)] \emph{upper semicontinuity of random attractors}.
	\end{itemize}
	As mentioned earlier, the results which we prove in this article for CBF equations are not available for NSE  on whole domain. That is, the existence of global pullback attractors for NSE and, existence and upper semicontinuity of random pullback attractors for stochastic NSE on whole domains is still an open problem. It is worth to mention here that linear damping term or Darcy term (that is, $\alpha\u$ with $\alpha>0$) plays a pivotal role in obtaining the above mentioned results on whole domains.

	Let us now discuss the parametric spaces $\Omega_1$ and $\Omega_2$, which we need to generate cocycles for the solution of non-autonomous deterministic as well as stochastic CBF equations. Here, $\Omega_1$ is a non-empty set needed to deal with time-dependent forcing term $\f$ and $\Omega_2$ is a probability space which manages stochastic term. In particular, we examine how to choose the parametric space $\Omega_1$ for the time-dependent forcing term $\f$ so that the solution operators of equations \eqref{CBF} and \eqref{SCBF} can be formulated into the setting of cocycles.  It is presented in \cite{SandN_Wang} that we can choose the space $\Omega_1$ by at least two ways. We may take $\Omega_1$ either as the set of all translations of the deterministic terms or as the set of all initial times, that is, $\Omega_1=\R$. It is demonstrated in \cite{SandN_Wang} that we get the same results for the choices of $\Omega_1$ given above. In this article, we will choose $\Omega_1=\R$ and $\Omega_2=\Omega$, where $\Omega$ is given by \eqref{Omega}. 
	
	When one considers evolution equations on unbounded domains and tries to demonstrate the existence of attractors, both in deterministic and stochastic settings, the major drawback is non-compactness of Sobolev embeddings. To avoid this difficulty, one can use the asymptotic compactness method. One way to obtain the asymptotic compactness is the use of  weak convergence and energy equation (cf. \cite{KM,MTM2,PeriodicWang} etc). An another way to achieve this by approximating unbounded domains by bounded domains and using the fact that the approximation error of the norm of solutions is arbitrarily small for large time (uniformly)  or sometimes it is called uniform tail estimate for the solutions (see \cite{Wang,SandN_Wang,ZW} etc). We point out here that we are able to prove uniform tail estimate for the solutions only if $r\in[2,\infty),$ where nonlinear damping term $\beta|\u|^{r-1}\u$ (for $\beta>0$) plays key role, and $\f\in\mathrm{L}^2(0,T;\H)$ (see Lemma \ref{largeradius}). Therefore we use the method of energy equation given in \cite{Ball} to prove the asymptotic compactness which gives the results for all $r\in[1,\infty)$ and $\f\in\mathrm{L}^2(0,T;\V')$ (see Theorem \ref{D-asymp}, Lemmas \ref{Asymptotic_v} and \ref{precompact}).

	\iffalse 
	As we are working on whole domains, the Helmholtz-Hodge (or Leray) projection $\mathcal{P}$ and $-\Delta$ commutes, therefore the following equality  
	\begin{align}\label{2}
		&	\int_{\R^d}(-\Delta\u(x))\cdot|\u(x)|^{r-1}\u(x)d x\nonumber\\&=\int_{\R^d}|\nabla\u(x)|^2|\u(x)|^{r-1}\d x+4\left[\frac{r-1}{(r+1)^2}\right]\int_{\R^d}|\nabla|\u(x)|^{\frac{r+1}{2}}|^2\d x.
	\end{align} can be used. In \cite{KT2,MTM1}, it is discussed that the equality \eqref{2} may not be useful except on periodic domains or whole domains. In literature, it is also noticed that some authors used equality \eqref{2} in their work on bounded domains also, which is not true always (see \cite{HZ,You} etc).
	
	We notice that we do not need to use equality \eqref{2} when we prove the results using the idea od energy equation given by \cite{Ball}. But, if one prove the results using uniform tail estimate then equality \eqref{2} is needed to prove the results. Moreover, in uniform tail estimate method, we need to have more regularity on non-autonomous deterministic forcing term (see section \ref{sec7}), hence this method is not suitable for this work.
	\fi 
	
	The rest of the paper is organized as follows. In the next section, we define some functional spaces which are needed in the further analysis. Moreover, linear, bilinear and nonlinear operators along with their important properties are also defined in the same section. In section \ref{sec3}, we consider the non-autonomous deterministic CBF equations \eqref{CBF} and establish the existence of a unique global pullback attractor (Theorem \ref{Main_T1}) by proving asymptotic compactness using  the method of energy equation given in \cite{Ball} (Theorem \ref{D-asymp}). In section \ref{sec4}, we consider the non-autonomous stochastic CBF equations perturbed by multiplicative noise \eqref{SCBF}. In order to deal with the stochastic term, we convert the system \eqref{SCBF} into the system \eqref{CCBF} (a system which is deterministic for each $\omega\in\Omega$) with the help of a transformation given by \eqref{Trans2}. Also, we define the non-autonomous random dynamical system (NRDS) for the system \eqref{SCBF} in the same section. We establish the existence of a random pullback attractor for the system \eqref{SCBF} in section \ref{sec5}. Initially, we prove an absorbing set (Lemma \ref{LemmaUe}) and asymptotic compactness (Lemma \ref{Asymptotic_v}) for the system \eqref{CCBF}. Later, using the transformation given in \eqref{Trans2}, we prove the existence of a unique random pullback attractor for the system \eqref{SCBF} (Theorem \ref{PullbackAttractor}). In section \ref{sec6}, we demonstrate the upper semicontinuity of random pullback attractors for the system \eqref{SCBF} using the abstract theory given in \cite{non-autoUpperWang} (Theorem \ref{Main_T3}). In the Appendix \ref{sec7}, we discuss the uniform tail estimate for the solution of stochastic CBF equations \eqref{SCBF}.

	\section{Mathematical Formulation}\label{sec2}\setcounter{equation}{0}
	This section is devoted for providing the necessary function spaces needed to obtain the results of this work.
	\subsection{Function spaces} 
	We define the space $$\mathcal{V}:=\{\u\in\C_0^{\infty}(\mathbb{R}^d;\mathbb{R}^d):\nabla\cdot\u=0\},$$ where $\C_0^{\infty}(\mathbb{R}^d;\mathbb{R}^d)$ denotes the space of all infinitely differentiable functions  ($\mathbb{R}^d$-valued) with compact support in $\mathbb{R}^d$. Let $\H$, $\V$ and $\wi\L^p$ denote the completion of $\mathcal{V}$ in 	$\mathrm{L}^2(\mathbb{R}^d;\mathbb{R}^d)$, $\mathrm{H}^1(\mathbb{R}^d;\mathbb{R}^d)$ and $\mathrm{L}^p(\mathbb{R}^d;\mathbb{R}^d)$, $p\in(2,\infty)$,  norms, respectively. The space $\H$ is endowed with the norm $\|\u\|_{\H}^2:=\int_{\R^d}|\u(x)|^2\d x,$ the norm on the space $\widetilde{\L}^{p}$ is defined by $\|\u\|_{\wi \L^p}^2:=\int_{\R^d}|\u(x)|^p\d x,$ for $p\in(2,\infty)$ and the norm on the space $\V$ is given by $\|\u\|^2_{\V}=\int_{\R^d}|\u(x)|^2\d x+\int_{\R^d}|\nabla\u(x)|^2\d x.$ The inner product in the Hilbert space $\H$ is denoted by $( \cdot, \cdot)$. The duality pairing between the spaces $\V$ and $\V'$, and $\widetilde{\L}^p$ and its dual $\widetilde{\L}^{\frac{p}{p-1}}$ is represented by $\langle\cdot,\cdot\rangle.$ It should be noted that $\H$ can be identified with its own dual $\H'$. We endow the space $\V\cap\widetilde{\L}^{p}$ with the norm $\|\u\|_{\V}+\|\u\|_{\widetilde{\L}^{p}},$ for $\u\in\V\cap\widetilde{\L}^p$ and its dual $\V'+\widetilde{\L}^{p'}$ with the norm $$\inf\left\{\max\left(\|\v_1\|_{\V'},\|\v_1\|_{\widetilde{\L}^{p'}}\right):\v=\v_1+\v_2, \ \v_1\in\V', \ \v_2\in\widetilde{\L}^{p'}\right\}.$$ 
	\iffalse 
	Moreover, we have the following continuous  embedding also:
	$\V\cap\widetilde{\L}^{p}\hookrightarrow\H\equiv\H'\hookrightarrow\V'+\widetilde\L^{\frac{p}{p-1}}.$ One can define equivalent norms on $\V\cap\widetilde\L^{p}$ and $\V'+\widetilde\L^{\frac{p}{p-1}}$ as 
	\begin{align*}
		\|\u\|_{\V\cap\widetilde\L^{p}}=\left(\|\u\|_{\V}^2+\|\u\|_{\widetilde\L^{p}}^2\right)^{1/2}\ \text{ and } \ \|\u\|_{\V'+\widetilde\L^{\frac{p}{p-1}}}=\inf_{\u=\v+\w}\left(\|\v\|_{\V'}^2+\|\w\|_{\widetilde\L^{\frac{p}{p-1}}}^2\right)^{1/2}.
	\end{align*}
	\fi 
	
	\subsection{Linear operator}
	Let $\mathcal{P}: \L^2(\R^d) \to\H$ be the Helmholtz-Hodge (or Leray) projection. Note that the projection opertaor $\mathcal{P}$ can be expressed in terms of the Riesz transform (cf. \cite{MTSS}).  We define the Stokes operator
	\begin{equation*}
		\A\u:=-\mathcal{P}\Delta\u,\;\u\in\D(\A):=\V\cap\H^{2}(\R^d).
	\end{equation*}
	It should be noted that $\mathcal{P}$ and $\Delta$ commutes, that is, $\mathcal{P}\Delta=\Delta\mathcal{P}$. 
	\subsection{Bilinear operator}
	Let us define the \emph{trilinear form} $b(\cdot,\cdot,\cdot):\V\times\V\times\V\to\R$ by $$b(\u,\v,\w)=\int_{\R^d}(\u(x)\cdot\nabla)\v(x)\cdot\w(x)\d x=\sum_{i,j=1}^n\int_{\R^d}\u_i(x)\frac{\partial \v_j(x)}{\partial x_i}\w_j(x)\d x.$$ If $\u, \v$ are such that the linear map $b(\u, \v, \cdot) $ is continuous on $\V$, the corresponding element of $\V'$ is denoted by $\B(\u, \v)$. We also denote $\B(\u) = \B(\u, \u)=\mathcal{P}[(\u\cdot\nabla)\u]$.	An integration by parts gives 
	\begin{equation}\label{b0}
		\left\{
		\begin{aligned}
			b(\u,\v,\v) &= 0,\ \text{ for all }\ \u,\v \in\V,\\
			b(\u,\v,\w) &=  -b(\u,\w,\v),\ \text{ for all }\ \u,\v,\w\in \V.
		\end{aligned}
		\right.\end{equation}
	\begin{remark}
		The following estimates on the trilinear form $b(\cdot,\cdot,\cdot)$ are used in the sequel (see Chapter 2, section 2.3, \cite{Temam1}): For all $\u, \v, \w\in \V$, 
			\begin{align}
				|b(\u,\v,\w)|&\leq
				C\|\u\|^{1/2}_{\H}\|\nabla\u\|^{1/2}_{\H}\|\nabla\v\|_{\H}\|\w\|^{1/2}_{\H}\|\nabla\w\|^{1/2}_{\H},\quad\text{ for }n=2,\label{b1}
			\end{align}
		and 
			\begin{align}
				|b(\u,\v,\w)|&\leq
				C\|\u\|^{1/4}_{\H}\|\nabla\u\|^{3/4}_{\H}\|\nabla\v\|_{\H}\|\w\|^{1/4}_{\H}\|\nabla\w\|^{3/4}_{\H},\quad\text{ for }n=3. \label{b3}
			\end{align}
	\end{remark}
	\begin{remark}
		Note that $\langle\B(\u,\u-\v),\u-\v\rangle=0$ , which  implies that
		\begin{equation}\label{441}
			\begin{aligned}
				\langle \B(\u)-\B(\v),\u-\v\rangle =\langle\B(\u-\v,\v),\u-\v\rangle=-\langle\B(\u-\v,\u-\v),\v\rangle.
			\end{aligned}
		\end{equation} 
	\end{remark}
	\subsection{Nonlinear operator}
	Let us consider the nonlinear operator $\mathcal{C}(\u):=\mathcal{P}(|\u|^{r-1}\u),$ for $\u\in\V\cap\wi\L^{r+1}$. It is obvious that $\langle\mathcal{C}(\u),\u\rangle =\|\u\|_{\widetilde{\L}^{r+1}}^{r+1}$. Moreover, the map $\mathcal{C}(\cdot):\V\cap\widetilde{\L}^{r+1}\to\V'+\widetilde{\L}^{\frac{r+1}{r}}$.  For any $r\in [1, \infty)$ and $\u, \v \in \V\cap\widetilde{\L}^{r+1}$, we have (see subsection 2.4, \cite{MTM1})
	\begin{align}\label{MO_c}
		\langle\mathcal{C}(\u)-\mathcal{C}(\v),\u-\v\rangle\geq\frac{1}{2}\||\u|^{\frac{r-1}{2}}(\u-\v)\|_{\H}^2+\frac{1}{2}\||\v|^{\frac{r-1}{2}}(\u-\v)\|_{\H}^2 \geq 0.
	\end{align}
	\begin{lemma}[Interpolation inequality] \label{Interpolation}
		Assume $1\leq s_1\leq s\leq s_2\leq \infty$, $\ell\in(0,1)$ such that $\frac{1}{s}=\frac{\ell}{s_1}+\frac{1-\ell}{s_2}$ and $\u\in\L^{s_1}(\R^d)\cap\L^{s_2}(\R^d)$, then
		\begin{align}\label{211}
			\|\u\|_{\L^s(\R^d)}\leq\|\u\|_{\L^{s_1}(\R^d)}^{\ell}\|\u\|_{\L^{s_2}(\R^d)}^{1-\ell}. 
		\end{align}
	\end{lemma}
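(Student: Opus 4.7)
The plan is to derive this as a direct consequence of H\"older's inequality with a carefully chosen pair of exponents. First I would handle the generic case $s_1 \leq s \leq s_2 < \infty$ with $\ell \in (0,1)$, and afterwards deal with the boundary cases $s = s_1$, $s = s_2$, and $s_2 = \infty$ as easy observations (the first two being trivial since $\ell \in \{0,1\}$ makes the right-hand side reduce to one of the factors, and the last being the standard $\mathrm{L}^p \cap \mathrm{L}^{\infty}$ interpolation).

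For the main case, the key step is to write the integrand as a product that splits the exponent $s$ into two pieces weighted by $\ell$ and $1-\ell$. Specifically, I would write
\begin{align*}
\int_{\R^d} |\u(x)|^s \, \d x = \int_{\R^d} |\u(x)|^{s\ell} \cdot |\u(x)|^{s(1-\ell)} \, \d x,
\end{align*}
and then apply H\"older's inequality with conjugate exponents $p = \frac{s_1}{s\ell}$ and $q = \frac{s_2}{s(1-\ell)}$. The relation $\frac{1}{s} = \frac{\ell}{s_1} + \frac{1-\ell}{s_2}$ is precisely what forces $\frac{1}{p} + \frac{1}{q} = 1$, so H\"older's inequality is applicable. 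Note also that the assumption $s_1 \leq s \leq s_2$ guarantees $p, q \geq 1$ (which is needed for H\"older).

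This yields
\begin{align*}
\int_{\R^d} |\u(x)|^s \, \d x \leq \left(\int_{\R^d} |\u(x)|^{s_1} \, \d x\right)^{\frac{s\ell}{s_1}} \left(\int_{\R^d} |\u(x)|^{s_2} \, \d x\right)^{\frac{s(1-\ell)}{s_2}},
\end{align*}
and raising both sides to the power $1/s$ gives exactly the desired inequality \eqref{211}. I do not anticipate any genuine obstacle here: this is a textbook result and the only care required is in verifying the exponent bookkeeping and in treating the $s_2 = \infty$ case separately, where one bounds $|\u|^{s(1-\ell)}$ pointwise by $\|\u\|_{\mathrm{L}^{\infty}}^{s(1-\ell)}$ and then applies a single integration to the remaining $|\u|^{s\ell}$ factor.
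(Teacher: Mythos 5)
Your proof is correct: the Hölder split with exponents $p=\frac{s_1}{s\ell}$, $q=\frac{s_2}{s(1-\ell)}$ is the standard argument, the identity $\frac{1}{p}+\frac{1}{q}=s\left(\frac{\ell}{s_1}+\frac{1-\ell}{s_2}\right)=1$ checks out, and $p,q\geq1$ follows since $\frac{1}{s}\geq\frac{\ell}{s_1}$ and $\frac{1}{s}\geq\frac{1-\ell}{s_2}$. For comparison, the paper offers no proof at all — it records Lemma \ref{Interpolation} as a known textbook fact — so there is nothing to diverge from; the only cosmetic remark is that under the stated hypothesis $\ell\in(0,1)$ the boundary cases $s=s_1$ or $s=s_2$ force $s_1=s_2$, so they need no separate treatment beyond the $s_2=\infty$ case you already handle.
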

	
	\section{Global pullback attractor for non-autonomous CBF equations }\label{sec3}\setcounter{equation}{0} This section is dedicated for establishing the global pullback attractor for non-autonomous CBF equations. 
	\subsection{Abstract formulation}
	Taking orthogonal projection $\mathcal{P}$ to the system \eqref{1}, we get 
	\begin{equation}\label{CBF}
		\left\{
		\begin{aligned}
			\frac{\d\u(t)}{\d t}+\mu \A\u(t)+\B(\u(t))+\alpha\u(t) +\beta\mathcal{C}(\u(t))&=\f(t) , \quad \text{ in } \R^d\times(\tau,\infty), \\ 
			\u(x,\tau)&=\u_{0}(x),\quad x\in \R^d \text{ and } \tau\in\R,
		\end{aligned}
		\right.
	\end{equation} 
	in $\V'+\widetilde{\L}^{\frac{r+1}{r}}$, where $\u_0\in\H$ and $\f\in\mathrm{L}^2_{\mathrm{loc}}(\R;\V')$. The existence and uniqueness of solution to the system \eqref{CBF} is proved in \cite{MTM}. Moreover, the solution $\u(\cdot)$ belongs to $\mathrm{C}([\tau,+\infty);\H)\cap\mathrm{L}^2_{\mathrm{loc}}(\tau,+\infty;\V)\cap\mathrm{L}^{r+1}_{\mathrm{loc}}(\tau,+\infty;\widetilde{\L}^{r+1})$. 
	
	In order to define the non-autonomous dynamical system generated by \eqref{CBF}, we consider $\{\theta_t\}_{t\in\R}$ be a family of shift operator on $\R,$ which is given by
	\begin{align}\label{theta}
		\theta_t \tau=\tau+t, \ \ \ \text{ for all } \tau\in\R.
	\end{align} and define
	\begin{align}\label{phi}
		\Phi_0(t,\tau,\u_0)=\u(t+\tau;\tau,\u_{0}),\qquad\tau\in\R, \ t\geq0,\ \u_0\in\H.
	\end{align}
	Since, $\u(\cdot)$ is the unique solution to the system \eqref{CBF}, it implies that
	\begin{align}\label{phi1}
		\Phi_0(t+s,\tau,\u_0)=\Phi_0(t,s+\tau,\Phi_0(s,\tau,\u_0)),\qquad\tau\in\R, \ t, s\geq0,\ \u_0\in\H.
	\end{align}
	Also, it is easy to prove by a standard method that for all $\tau\in\R, t\geq0$ the mapping $\Phi_0(t,\tau,\cdot)$ defined in \eqref{phi} is continuous from $\H$ into itself. Therefore, the mapping $\Phi_0$ given by \eqref{phi} is continuous $\theta$-cocycle on $\H$. 
	Throughout this section, we assume that external forcing term also satisfies the following:
	\begin{align}\label{forcing}
		\int_{-\infty}^{\tau}e^{\alpha \xi}\|\f(\cdot,\xi)\|^2_{\V'}\d\xi<+\infty, \text{ for all } \tau\in \R.
	\end{align}
	Let $E\subseteq\H$ and denote by 
	$$\|E\|_{\H}=\sup_{x\in E}\|x\|_{\H}.$$ Assume that $D=\{D(\tau)\}_{\tau\in\R}$ is a family of non-empty subsets of $\H$ satisfying
	\begin{align}\label{D_0}
		\lim_{t\to-\infty}e^{\alpha t}\|D(\tau+t)\|^2_{\H}=0,
	\end{align}
	where $\alpha>0$ is the Darcy coefficient. Also, let $\mathfrak{D}_0$ denote the set of all families of subsets of $\H$ satisfying \eqref{D_0}, that is,
	\begin{align}\label{D_01}
		\mathfrak{D}_0=\{D=\{D(\tau)\}_{\tau\in\R}:D \text{ satisfying } \eqref{D_0}\}.
	\end{align} 
	\begin{theorem}\label{D-abH}
		For $d=2$ with $r\geq1$, $d=3$ with $r>3$ and $d=r=3$ with $2\beta\mu\geq1$, assume that $\f\in \mathrm{L}^2_{\mathrm{loc}}(\mathbb{R};\V')$ satisfies \eqref{forcing}. Then for every $\tau\in\R$ and $K=\{K(\tau)\}_{\tau\in\R}\in\mathfrak{D}_{0}$, there exists $\mathscr{T}=\mathscr{T}(\tau,K)>0$ such that for all $t\geq \mathscr{T}$, 
		\begin{align}\label{S1*}
			\|\u(\tau;\tau-t,\u^0)\|^2_{\H}\leq 1 + \frac{e^{-\alpha \tau}}{\min\{\mu,\alpha\}}\int_{-\infty}^{\tau}e^{\alpha \xi}\|\f(\cdot,\xi)\|^2_{\V'}\d\xi,
		\end{align}
		where $\u^0\in K(\tau-t)$.
	\end{theorem}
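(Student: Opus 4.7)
The plan is to obtain the estimate via the standard energy method, exploiting the Darcy damping $\alpha\u$ as the source of exponential decay and the integrability hypothesis \eqref{forcing} to control the forcing. First, I would take the $\H$-inner product of \eqref{CBF} with $\u(t)$. Because $\u \in \mathrm{L}^2_{\mathrm{loc}}(\tau,\infty;\V)\cap\mathrm{L}^{r+1}_{\mathrm{loc}}(\tau,\infty;\wi\L^{r+1})$, the pairing is justified, and by \eqref{b0} the convective term vanishes, $\langle\B(\u),\u\rangle = b(\u,\u,\u) = 0$, while $\langle\mathcal{C}(\u),\u\rangle = \|\u\|_{\wi\L^{r+1}}^{r+1} \geq 0$. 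This yields
\begin{equation*}
\frac{1}{2}\frac{\d}{\d t}\|\u\|_{\H}^2 + \mu\|\nabla\u\|_{\H}^2 + \alpha\|\u\|_{\H}^2 + \beta\|\u\|_{\wi\L^{r+1}}^{r+1} = \langle\f,\u\rangle.
\end{equation*}

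Next, I would estimate the right-hand side by Young's inequality, tuning the parameter to $\min\{\mu,\alpha\}$:
\begin{equation*}
|\langle\f,\u\rangle| \leq \|\f\|_{\V'}\|\u\|_{\V} \leq \frac{\|\f\|_{\V'}^2}{2\min\{\mu,\alpha\}} + \frac{\min\{\mu,\alpha\}}{2}\bigl(\|\u\|_{\H}^2 + \|\nabla\u\|_{\H}^2\bigr).
\end{equation*}
Combining with the previous identity and dropping the (non-negative) Forchheimer term, I obtain the differential inequality
\begin{equation*}
\frac{\d}{\d t}\|\u\|_{\H}^2 + \alpha\|\u\|_{\H}^2 + \mu\|\nabla\u\|_{\H}^2 \leq \frac{\|\f\|_{\V'}^2}{\min\{\mu,\alpha\}}.
\end{equation*}
After dropping the gradient term, I multiply by the integrating factor $e^{\alpha t}$ and integrate over $[\tau-t,\tau]$, giving
\begin{equation*}
\|\u(\tau;\tau-t,\u^0)\|_{\H}^2 \leq e^{-\alpha t}\|\u^0\|_{\H}^2 + \frac{e^{-\alpha \tau}}{\min\{\mu,\alpha\}}\int_{\tau-t}^{\tau} e^{\alpha \xi}\|\f(\cdot,\xi)\|_{\V'}^2 \d\xi.
\end{equation*}

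Finally, since $\u^0 \in K(\tau-t)$, I have $e^{-\alpha t}\|\u^0\|_{\H}^2 \leq e^{-\alpha t}\|K(\tau-t)\|_{\H}^2$, and hypothesis \eqref{D_0} (applied after the substitution $s=-t$) forces this quantity to tend to zero as $t\to\infty$. Thus there exists $\mathscr{T}=\mathscr{T}(\tau,K)>0$ such that $e^{-\alpha t}\|K(\tau-t)\|_{\H}^2 \leq 1$ for all $t\geq\mathscr{T}$. Enlarging the integration range from $[\tau-t,\tau]$ to $(-\infty,\tau]$ (which is finite by \eqref{forcing}) yields \eqref{S1*}. There is no serious obstacle here: the argument is a direct absorbing-set calculation in which the Darcy term $\alpha\u$ does all the heavy lifting — the convective and Forchheimer terms contribute non-negatively to the energy identity, which is precisely why the same scheme is not known to work for the pure Navier--Stokes system on $\R^d$.
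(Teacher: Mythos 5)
Your proposal is correct and follows essentially the same route as the paper: the same energy identity from \eqref{CBF} with \eqref{b0} and the positivity of the Forchheimer term, the same Young's inequality tuned to $\min\{\mu,\alpha\}$ leading to $\frac{\d}{\d t}\|\u\|^2_{\H}+\alpha\|\u\|^2_{\H}\leq\frac{1}{\min\{\mu,\alpha\}}\|\f\|^2_{\V'}$, the same integrating factor $e^{\alpha t}$, and the same use of \eqref{D_0} to absorb $e^{-\alpha t}\|\u^0\|^2_{\H}$ into the constant $1$ for $t\geq\mathscr{T}$. No discrepancies to report.
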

	\begin{proof}
		Let us take $\u(\cdot)=\u(\cdot;\tau-t,\u^0)$. 	From the first equation of \eqref{CBF}, we get
		\begin{align}
			\frac{1}{2}&\frac{\d}{\d s}\|\u(s)\|^2_{\H}+\mu\|\nabla\u(s)\|^2_{\H}+\alpha\|\u(s)\|^2_{\H}+\beta\|\u(s)\|^{r+1}_{\wi \L^{r+1}}\nonumber\\&=\left\langle\f(s),\u(s)\right\rangle\leq\|\f(s)\|_{\V'}\|\u(s)\|_{\V}\nonumber\\&\leq\frac{1}{2\min\{\mu,\alpha\}}\|\f(s)\|^2_{\V'}+\frac{\min\{\mu,\alpha\}}{2}\|\u(s)\|^2_{\V},\label{S*1}
		\end{align}
		so that 
		\begin{align}
			&\frac{\d}{\d s}\|\u(s)\|^2_{\H}+\alpha\|\u(s)\|^2_{\H}\leq\frac{1}{\min\{\mu,\alpha\}}\|\f(s)\|^2_{\V'},\label{S*3}
		\end{align}
		for a.e. $[\tau-t,\tau-t+T]$ with $T>0$ and it follows from the variation of constants formula that
		\begin{align}\label{S*4}
			e^{\alpha \tau}\|\u(\tau;\tau-t,\u^0)\|^2_{\H} &\leq e^{\alpha (\tau-t)}\|\u^0\|^2_{\H}+ \frac{1}{\min\{\mu,\alpha\}}\int_{\tau-t}^{\tau}e^{\alpha \xi}\|\f(\cdot,\xi)\|^2_{\V'}\d\xi.
		\end{align}
		Since $\u^0\in K(\tau-t)$, there exists $\mathscr{T}=\mathscr{T}(\tau,K)$ such that for all $t\geq \mathscr{T},$
		\begin{align*}
			e^{-\alpha t}\|\u^0\|^2_{\H}\leq 1,
		\end{align*}
		which conclude the proof together with \eqref{S*4}.
	\end{proof}
	The next result will be used to prove asymptotically compactness of $\Phi_0$. A proof of the following Theorem can be obtained similar to that of  Theorem \ref{D-abH}, and hence we omit here.
	\begin{theorem}\label{D-abH2}
		For $d=2$ with $r\geq1$, $d=3$ with $r>3$ and $d=r=3$ with $2\beta\mu\geq1$, assume that $\f\in \mathrm{L}^2_{\mathrm{loc}}(\mathbb{R};\V')$ satisfies \eqref{forcing}. Then for every $\tau\in\R$ and $K=\{K(\tau)\}_{\tau\in\R}\in\mathfrak{D}_{0}$, there exists $\mathscr{T}=\mathscr{T}(\tau,K)>0$ such that for every $k\geq0$ and for all $t\geq \mathscr{T}+k$, 
		\begin{align}\label{S*2}
			\|\u(\tau-k;\tau-t,\u^0)\|^2_{\H}\leq e^{\alpha k} + \frac{e^{-\alpha (\tau-k)}}{\min\{\mu,\alpha\}}\int_{-\infty}^{\tau}e^{\alpha \xi}\|\f(\cdot,\xi)\|^2_{\V'}\d\xi,
		\end{align}
		where $\u^0\in K(\tau-t)$.
	\end{theorem}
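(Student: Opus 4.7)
The plan is to mirror the energy estimate of Theorem \ref{D-abH} verbatim, changing only the upper endpoint of integration from $\tau$ to $\tau-k$. Since the dissipative structure of \eqref{CBF} is unaffected by the choice of endpoint, the same differential inequality drives both proofs; the only bookkeeping is that the extra shift $k$ propagates through the exponential factors.

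First, test the first equation of \eqref{CBF} against $\u(s)$ in the duality pairing and use the identities $\langle\B(\u),\u\rangle=0$ from \eqref{b0} and $\langle\mathcal{C}(\u),\u\rangle=\|\u\|_{\widetilde{\L}^{r+1}}^{r+1}\geq 0$ from the monotonicity discussion. Applying $\langle\f,\u\rangle\leq\|\f\|_{\V'}\|\u\|_{\V}$ followed by Young's inequality exactly as in \eqref{S*1}, then discarding $\mu\|\nabla\u\|_{\H}^{2}$ and $\beta\|\u\|_{\widetilde{\L}^{r+1}}^{r+1}$, one arrives at the differential inequality
\begin{equation*}
\frac{\d}{\d s}\|\u(s)\|_{\H}^{2}+\alpha\|\u(s)\|_{\H}^{2}\leq\frac{1}{\min\{\mu,\alpha\}}\|\f(s)\|_{\V'}^{2},
\end{equation*}
valid for a.e. $s\in[\tau-t,\tau-k]$ (this needs $t\geq k$, which is included in $t\geq\mathscr{T}+k$).

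Next, multiply by the integrating factor $e^{\alpha s}$ and integrate from $\tau-t$ to $\tau-k$ to obtain
\begin{equation*}
e^{\alpha(\tau-k)}\|\u(\tau-k;\tau-t,\u^{0})\|_{\H}^{2}\leq e^{\alpha(\tau-t)}\|\u^{0}\|_{\H}^{2}+\frac{1}{\min\{\mu,\alpha\}}\int_{\tau-t}^{\tau-k}e^{\alpha\xi}\|\f(\cdot,\xi)\|_{\V'}^{2}\d\xi.
\end{equation*}
Dividing by $e^{\alpha(\tau-k)}$ and enlarging the interval of integration to $(-\infty,\tau]$ (allowed by the assumption \eqref{forcing}) yields
\begin{equation*}
\|\u(\tau-k;\tau-t,\u^{0})\|_{\H}^{2}\leq e^{-\alpha(t-k)}\|\u^{0}\|_{\H}^{2}+\frac{e^{-\alpha(\tau-k)}}{\min\{\mu,\alpha\}}\int_{-\infty}^{\tau}e^{\alpha\xi}\|\f(\cdot,\xi)\|_{\V'}^{2}\d\xi.
\end{equation*}

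Finally, because $\u^{0}\in K(\tau-t)$ with $K\in\mathfrak{D}_{0}$, the defining property \eqref{D_0} gives $e^{-\alpha t}\|K(\tau-t)\|_{\H}^{2}\to 0$ as $t\to+\infty$, so there exists $\mathscr{T}=\mathscr{T}(\tau,K)>0$ with $e^{-\alpha(t-k)}\|\u^{0}\|_{\H}^{2}\leq 1$ whenever $t-k\geq\mathscr{T}$, i.e., $t\geq\mathscr{T}+k$; combined with $e^{\alpha k}\geq 1$, this absorbs the initial datum term into $e^{\alpha k}$ and completes the bound \eqref{S*2}. The only step requiring any care is choosing $\mathscr{T}$ uniformly in $k$, which is handled by replacing $t\geq\mathscr{T}$ in the original argument by $t\geq\mathscr{T}+k$; no genuine new obstacle appears.
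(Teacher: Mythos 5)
Your proposal is correct and follows exactly the route the paper intends: the paper omits this proof as ``similar to that of Theorem \ref{D-abH}'', and your argument is precisely that proof with the integration stopped at $\tau-k$ instead of $\tau$, the extra factor $e^{\alpha k}$ appearing when you divide by $e^{\alpha(\tau-k)}$ rather than $e^{\alpha\tau}$. One small inaccuracy: the temperedness condition \eqref{D_0} gives you $e^{-\alpha t}\|K(\tau-t)\|^2_{\H}\leq 1$ for $t\geq\mathscr{T}$, hence $e^{-\alpha(t-k)}\|\u^{0}\|^2_{\H}=e^{\alpha k}\,e^{-\alpha t}\|\u^{0}\|^2_{\H}\leq e^{\alpha k}$; your stronger intermediate claim that $e^{-\alpha(t-k)}\|\u^{0}\|^2_{\H}\leq 1$ whenever $t-k\geq\mathscr{T}$ is not justified uniformly in $k$ (since $\u^{0}$ lies in $K(\tau-t)$, not $K(\tau-t+k)$), but this is harmless because the bound you actually need, and the one appearing in \eqref{S*2}, is $e^{\alpha k}$.
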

	\begin{lemma}\label{D-convege}
		For $d=2$ with $r\geq1$, $d=3$ with $r>3$ and $d=r=3$ with $2\beta\mu\geq1$, assume that $\f\in\mathrm{L}^2_{\mathrm{loc}}(\R;\V')$ and $\{\u_n^0\}_{n\in\N}\subset\H$ be a sequence converging weakly to $\u^{0}\in\H$, in $\H$. Then
		\begin{align*}
			\u(t;\tau,\u_n^0)\xrightharpoonup{w} \u(t;\tau,\u^{0}) \ \text{ in }\  \H, \quad\text{ for all } t\geq\tau,\quad \tau\in\R,
		\end{align*}
		\begin{align*}
			\u(\cdot\ ;\tau,\u_n^0)\xrightharpoonup{w} \u(\cdot\ ;\tau,\u^{0}) \ \text{ in }\  \mathrm{L}^2(\tau,\tau+T;\V) \text{ and }\  \mathrm{L}^{r+1}(\tau,\tau+T;\wi\L^{r+1}), \text{ for every } T>0.
		\end{align*}
	\end{lemma}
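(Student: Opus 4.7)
The plan is a standard weak-compactness-plus-uniqueness argument. First, I would observe that since $\u_n^0\rightharpoonup\u^0$ in $\H$, the sequence $\{\u_n^0\}$ is norm-bounded, so the energy identity used in the proof of Theorem~\ref{D-abH} (keeping the viscous and Forchheimer terms on the left of \eqref{S*1}) yields uniform-in-$n$ bounds on $\u_n:=\u(\cdot\,;\tau,\u_n^0)$ in
\[
\mathrm{L}^{\infty}(\tau,\tau+T;\H)\cap\mathrm{L}^2(\tau,\tau+T;\V)\cap\mathrm{L}^{r+1}(\tau,\tau+T;\wi\L^{r+1}),
\]
for every $T>0$. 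Combined with the equation \eqref{CBF} and the trilinear estimates \eqref{b1}--\eqref{b3}, one also gets a uniform bound on $\partial_t\u_n$ in some $\mathrm{L}^q(\tau,\tau+T;\V'+\wi\L^{(r+1)/r})$ with $q>1$ (the exponent depends on the dimension but its precise value is irrelevant below).

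Next, I would extract a subsequence (still indexed by $n$) such that
\begin{align*}
\u_n &\xrightharpoonup{w^{*}} \v \quad\text{in } \mathrm{L}^{\infty}(\tau,\tau+T;\H),\\
\u_n &\xrightharpoonup{w} \v \quad\text{in } \mathrm{L}^{2}(\tau,\tau+T;\V),\\
\u_n &\xrightharpoonup{w} \v \quad\text{in } \mathrm{L}^{r+1}(\tau,\tau+T;\wi\L^{r+1}),\\
\partial_t\u_n &\xrightharpoonup{w} \partial_t\v \quad\text{in } \mathrm{L}^q(\tau,\tau+T;\V'+\wi\L^{(r+1)/r}).
\end{align*}
The principal obstacle is the lack of compactness of $\V\hookrightarrow\H$ on $\R^d$, which prevents a direct upgrade of these weak limits to strong limits globally. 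I would bypass it by localising: on every ball $\mathcal{O}_R=\{|x|<R\}$ the embedding $\V|_{\mathcal{O}_R}\hookrightarrow\L^2(\mathcal{O}_R)$ is compact, so the Aubin-Lions lemma provides $\u_n\to\v$ strongly in $\mathrm{L}^2(\tau,\tau+T;\L^2(\mathcal{O}_R))$, and a diagonal extraction then yields a.e.\ convergence on the whole cylinder $(\tau,\tau+T)\times\R^d$.

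Third, I would identify $\v$ with $\u(\cdot\,;\tau,\u^0)$ by passing to the limit in the weak form of \eqref{CBF} tested against smooth, compactly supported, divergence-free $\boldsymbol{\varphi}$. The linear terms converge by the weak convergences above. The trilinear contribution $\int b(\u_n,\u_n,\boldsymbol{\varphi})\,\d t$ and the absorption $\int\langle\mathcal{C}(\u_n),\boldsymbol{\varphi}\rangle\,\d t$ are supported on some ball $\mathcal{O}_R\supset\mathrm{supp}\,\boldsymbol{\varphi}$, where the local strong $\mathrm{L}^2$ and a.e.\ convergence, together with the uniform $\mathrm{L}^{r+1}$ bound, let us pass the limit inside via Vitali's theorem. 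The initial condition $\v(\tau)=\u^0$ follows from $\u_n(\tau)=\u_n^0\rightharpoonup\u^0$ combined with the weak continuity of $t\mapsto\v(t)$ in $\H$ inherited from the $\partial_t$-bound. The uniqueness result for \eqref{CBF} from \cite{MTM} then forces $\v=\u(\cdot\,;\tau,\u^0)$, and the standard subsequence-subsequence argument upgrades convergence from a subsequence to the entire sequence, giving the two $\mathrm{L}^2$ and $\mathrm{L}^{r+1}$ claims.

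Finally, for the pointwise weak convergence $\u_n(t)\xrightharpoonup{w}\u(t;\tau,\u^0)$ in $\H$ at every fixed $t\geq\tau$, I would pick any $h\in\H$, approximate it by elements of $\V\cap\wi\L^{r+1}$, and note that $s\mapsto(\u_n(s),h)$ is uniformly bounded and equicontinuous on $[\tau,\tau+T]$ thanks to the $\partial_t\u_n$-bound. Arzel\`a-Ascoli then extracts a uniformly convergent subsequence whose limit is forced to be $(\u(\cdot;\tau,\u^0),h)$ by the identification carried out above. The hard part throughout is the absence of global compactness on $\R^d$; this is handled entirely by localisation on balls together with compactly supported test functions, which is why no uniform-tail estimate (and hence no restriction such as $r\geq2$ or extra regularity on $\f$) is needed, consistently with the hypotheses $r\geq1$ and $\f\in\mathrm{L}^2_{\mathrm{loc}}(\R;\V')$ stated in the lemma.
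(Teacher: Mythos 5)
The paper does not supply a proof of this lemma: it simply states that the argument is analogous to Lemma 4.1 of \cite{MTM2} and omits it. Your proposal is a correct and complete version of exactly that standard argument (uniform energy bounds from weak boundedness of the initial data, weak-* / weak extraction, Aubin--Lions on balls plus diagonalization to get local strong and a.e.\ convergence, identification of the limit through the weak formulation and uniqueness, and the Arzel\`a--Ascoli/density step for the pointwise weak convergence in $\H$), so it matches the approach the paper is implicitly invoking and fills in the details it leaves out.
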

	\begin{proof}
		The proof of this lemma is analogous to Lemma 4.1 in \cite{MTM2} (in particular, we need to prove it for $\Phi_0$ given in \eqref{phi}) and hence we omit it here.
	\end{proof}
	\begin{theorem}\label{D-asymp}
		For $d=2$ with $r\geq1$, $d=3$ with $r>3$ and $d=r=3$ with $2\beta\mu\geq1$, assume that $\f\in\mathrm{L}^2_{\mathrm{loc}}(\R;\V')$ and satisfies \eqref{forcing}. Then, $\Phi_0$ is $\mathfrak{D}_{0}$-pullback asymptotically compact in $\H$.
	\end{theorem}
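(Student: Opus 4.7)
My plan is to follow Ball's energy equation method adapted to the CBF setting with Darcy term, exploiting the exponential $e^{\alpha s}$ weight that naturally arises from the $\alpha\u$ damping. Fix $\tau\in\R$, $K\in\mathfrak{D}_0$, and take sequences $t_n\to\infty$ and $\u_n^0\in K(\tau-t_n)$; set $\u_n(\cdot):=\u(\cdot;\tau-t_n,\u_n^0)$. By Theorem \ref{D-abH}, $\{\u_n(\tau)\}$ is bounded in $\H$, so after passing to a subsequence $\u_n(\tau)\xrightharpoonup{w}\xi$ in $\H$. By Theorem \ref{D-abH2}, for each integer $k\geq 0$ the sequence $\{\u_n(\tau-k)\}$ is bounded in $\H$ for $n$ large. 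A Cantor diagonal extraction then yields a further subsequence (still denoted $\u_n$) such that $\u_n(\tau-k)\xrightharpoonup{w}\xi_k$ in $\H$ for every integer $k\geq 0$, with $\xi_0=\xi$. Applying Lemma \ref{D-convege} on each window $[\tau-k,\tau]$ and using uniqueness of solutions, I obtain a single limit trajectory $\tilde\u(\cdot)$ on $(-\infty,\tau]$ with $\tilde\u(\tau-k)=\xi_k$, such that $\u_n\xrightharpoonup{w}\tilde\u$ in $\mathrm{L}^2(\tau-k,\tau;\V)\cap\mathrm{L}^{r+1}(\tau-k,\tau;\wi\L^{r+1})$ for every $k$.

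The key step is to write the energy equation: multiplying
\[
\frac{\d}{\d s}\|\u(s)\|_{\H}^2+2\alpha\|\u(s)\|_{\H}^2+2\mu\|\nabla\u(s)\|_{\H}^2+2\beta\|\u(s)\|_{\wi\L^{r+1}}^{r+1}=2\langle\f(s),\u(s)\rangle
\]
by $e^{\alpha s}$ and integrating from $\tau-k$ to $\tau$ gives, for every strong solution,
\[
e^{\alpha\tau}\|\u(\tau)\|_{\H}^2=e^{\alpha(\tau-k)}\|\u(\tau-k)\|_{\H}^2-\!\!\int_{\tau-k}^{\tau}\!\!e^{\alpha s}\bigl(\alpha\|\u\|_{\H}^2+2\mu\|\nabla\u\|_{\H}^2+2\beta\|\u\|_{\wi\L^{r+1}}^{r+1}\bigr)\d s+2\!\!\int_{\tau-k}^{\tau}\!\!e^{\alpha s}\langle\f,\u\rangle\d s.
\]
Writing this for both $\u_n$ and $\tilde\u$, taking $\limsup_{n\to\infty}$ in the identity for $\u_n$, using weak lower semicontinuity of the three dissipation integrals (each is a continuous convex functional on the corresponding weighted $\mathrm{L}^2$ or $\mathrm{L}^{r+1}$ space), and using that the forcing term is linear and continuous in the weak-convergence topology on $\mathrm{L}^2(\tau-k,\tau;\V)$, I obtain
\[
e^{\alpha\tau}\limsup_{n\to\infty}\|\u_n(\tau)\|_{\H}^2\leq e^{\alpha(\tau-k)}\limsup_{n\to\infty}\|\u_n(\tau-k)\|_{\H}^2-e^{\alpha(\tau-k)}\|\xi_k\|_{\H}^2+e^{\alpha\tau}\|\tilde\u(\tau)\|_{\H}^2,
\]
after substituting the identity for $\tilde\u$ back in.

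Dropping the nonnegative $\|\xi_k\|_{\H}^2$ term and invoking the estimate from \eqref{S*4} gives
\[
e^{\alpha(\tau-k)}\limsup_{n\to\infty}\|\u_n(\tau-k)\|_{\H}^2\leq\limsup_{n\to\infty}e^{-\alpha(t_n-k)}e^{\alpha k}\|\u_n^0\|_{\H}^2+\frac{1}{\min\{\mu,\alpha\}}\int_{-\infty}^{\tau-k}e^{\alpha\xi}\|\f(\xi)\|_{\V'}^2\d\xi.
\]
Since $\u_n^0\in K(\tau-t_n)$ with $K\in\mathfrak{D}_0$, the first term on the right vanishes as $n\to\infty$ (the $e^{\alpha k}$ factor is harmless for fixed $k$). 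Letting $k\to\infty$ and using \eqref{forcing} to kill the tail integral yields
\[
\limsup_{n\to\infty}\|\u_n(\tau)\|_{\H}^2\leq\|\tilde\u(\tau)\|_{\H}^2=\|\xi\|_{\H}^2.
\]
Combined with the weak lower semicontinuity $\|\xi\|_{\H}\leq\liminf_{n\to\infty}\|\u_n(\tau)\|_{\H}$, this forces $\|\u_n(\tau)\|_{\H}\to\|\xi\|_{\H}$, which together with weak convergence in the Hilbert space $\H$ gives strong convergence $\u_n(\tau)\to\xi$ in $\H$. This is precisely $\mathfrak{D}_0$-pullback asymptotic compactness of $\Phi_0$.

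The main obstacle is the rigorous justification of the energy equation as an \emph{equality} (not merely an inequality). For the CBF system this amounts to verifying that the regularity class $\mathrm{C}([\tau,+\infty);\H)\cap\mathrm{L}^2_{\mathrm{loc}}(\V)\cap\mathrm{L}^{r+1}_{\mathrm{loc}}(\wi\L^{r+1})$ is enough to apply the chain rule $\frac{\d}{\d s}\|\u(s)\|_{\H}^2=2\langle\partial_s\u,\u\rangle$; the nonlinear convective term $\langle\B(\u),\u\rangle=0$ by \eqref{b0}, and the absorption term is handled by $\langle\mathcal{C}(\u),\u\rangle=\|\u\|_{\wi\L^{r+1}}^{r+1}$, so the main technical point is an approximation (Galerkin or mollification) argument ensuring that the Lions–Magenes type identity holds in the present functional framework — this should already be available from the well-posedness argument of \cite{MTM}.
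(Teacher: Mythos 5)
Your argument is correct and follows essentially the same route as the paper's proof: the same decomposition $\u(\tau;\tau-t_n,\u_n^0)=\u(\tau;\tau-k,\u(\tau-k;\tau-t_n,\u_n^0))$, diagonal extraction of the weak limits $\tilde{\u}_k$, the weak-convergence Lemma \ref{D-convege}, and Ball's energy identity with an exponential weight, passing first $n\to\infty$ (weak lower semicontinuity of the dissipation integrals, weak continuity of the forcing term) and then $k\to\infty$ via \eqref{forcing}. The only cosmetic differences are your choice of the weight $e^{\alpha s}$, which retains $\alpha\int e^{\alpha s}\|\u\|^2_{\H}\d s$ as an extra weakly lower semicontinuous dissipation term, versus the paper's $e^{2\alpha(\xi-\tau)}$, which absorbs it, together with a harmless slip in the exponential prefactor of the vanishing initial-data term.
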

	\begin{proof}
		To prove the required result, we need to show that for every $\tau\in\R, K=\{K(\tau)\}_{\tau\in\R}\in\mathfrak{D}_{0}$, and $t_n\to\infty, \u_n^0\in K(\tau-t_n),$ the sequence $\Phi_0(t_n,\tau-t_n,\u_n^0)$ has a convergent subsequence in $\H$. From Theorem \ref{D-abH}, it follows  that there exists $\mathscr{T}=\mathscr{T}(\tau,K)>0$ such that for all $t\geq \mathscr{T}$,
		\begin{align}\label{S1}
			\|\u(\tau;\tau-t,\u^0)\|^2_{\H} \leq 1+ \frac{e^{-\alpha \tau}}{\min\{\mu,\alpha\}}\int_{-\infty}^{\tau}e^{\alpha \xi}\|\f(\cdot,\xi)\|^2_{\V'}\d\xi,
		\end{align}
		where $\u^0\in K(\tau-t).$ Since $t_n\to \infty$, there exists $N_0\in\N$ such that $t_n\geq \mathscr{T},$ for all $n\geq N_0$. Since $\u_n^0\in K(\tau-t_n)$,  then \eqref{S1} gives that 
		\begin{align}\label{S2}
			\|\u(\tau;\tau-t_n,\u_n^0)\|^2_{\H} \leq 1+ \frac{e^{-\alpha \tau}}{\min\{\mu,\alpha\}}\int_{-\infty}^{\tau}e^{\alpha \xi}\|\f(\cdot,\xi)\|^2_{\V'}\d\xi,
		\end{align}
		for all $n\geq N_0.$ From \eqref{S2}, it is clear that $\u(\tau;\tau-t_n,\u_n^0)$ is bounded in $\H$ for all $n\geq N_0$, therefore there exists a subsequence (denote here by same notation) and $\tilde{\u}\in \H$ such that 
		\begin{align}\label{S3}
			\u(\tau;\tau-t_n,\u_n^0)\xrightharpoonup{w}\tilde{\u} \ \text{ in }\  \H,
		\end{align}
		and the above weak convergence also implies that
		\begin{align}\label{S4}
			\|\tilde{\u}\|_{\H}\leq\liminf_{n\to\infty}\|\u(\tau;\tau-t_n,\u_n^0)\|_{\H}.
		\end{align}
		In order to show that the convergence in \eqref{S3} is in fact a strong convergence, we only need to prove 
		\begin{align}\label{S5}
			\|\tilde{\u}\|_{\H}\geq\limsup_{n\to\infty}\|\u(\tau;\tau-t_n,\u_n^0)\|_{\H}.
		\end{align}
		To prove \eqref{S5}, we apply the idea of energy equations presented in \cite{Ball}.  For a given $k\in \N$, we have 
		\begin{align}\label{S6}
			\u(\tau;\tau-t_n,\u_n^0)=\u(\tau;\tau-k,\u(\tau-k;\tau-t_n,\u_n^0)).
		\end{align} 
		For each $k$, let $N_k$ be sufficiently large such that $t_n\geq \mathscr{T}+k$ for all $n\geq N_k$. From Theorem \ref{D-abH2}, we find 
		\begin{align}\label{S_6}
			\|\u(\tau-k;\tau-t_n,\u_n^0)\|^2_{\H} \leq e^{\alpha k}+ \frac{e^{-\alpha (\tau-k)}}{\min\{\mu,\alpha\}}\int_{-\infty}^{\tau}e^{\alpha \xi}\|\f(\cdot,\xi)\|^2_{\V'}\d\xi,
		\end{align}
		for $k\geq N_k.$ For each fixed $k\in \N$, it is obvious from \eqref{S_6} that $\u(\tau-k;\tau-t_n,\u_n^0)$ is a bounded sequence in $\H$. By the diagonal process, there exists a subsequence (for convenience, we use the same) and $\tilde{\u}_k\in \H$ for each $k\in\N$ such that 
		\begin{align}\label{S7}
			\u(\tau-k;\tau-t_n,\u_n^0)\xrightharpoonup{w}\tilde{\u}_k \ \text{ in } \ \H.
		\end{align}
		It follows from \eqref{S6},\eqref{S7} and Lemma \ref{D-convege} that for $k\in\N$,
		\begin{align}\label{S8}
			\u(\tau;\tau-t_n,\u_n^0)\xrightharpoonup{w}\u(\tau;\tau-k,\tilde{\u}_{k}) \ \text{ in } \ \H,
		\end{align}
		\begin{align}\label{S9}
			\u(\cdot\ ;\tau-k,\u(\tau-k;\tau-t_n,\tilde{\u}_{k}))\xrightharpoonup{w}\u(\cdot\ ;\tau-k,\tilde{\u}_{k}) \ \text{ in } \ \mathrm{L}^2(\tau-k,\tau;\V).
		\end{align}
		and
		\begin{align}\label{S9'}
			\u(\cdot\ ;\tau-k,\u(\tau-k;\tau-t_n,\tilde{\u}_{k}))\xrightharpoonup{w}\u(\cdot\ ;\tau-k,\tilde{\u}_{k}) \ \text{ in } \ \mathrm{L}^{r+1}(\tau-k,\tau;\wi\L^{r+1}).
		\end{align}
		Using  the uniqueness of weak limits, from \eqref{S3} and \eqref{S8}, we get
		\begin{align}\label{S10}
			\u(\tau;\tau-k,\tilde{\u}_{k})=\tilde{\u}.
		\end{align}
		From \eqref{S*1}, we also have
		\begin{align}\label{S11}
			\frac{\d}{\d t} \|\u\|^2_{\H} +2\alpha\|\u\|^2_{\H} + 2\mu\|\nabla\u\|^2_{\H} +  2\beta \|\u\|^{r+1}_{\wi \L^{r+1}} = 2\left\langle\f,\u\right\rangle.
		\end{align}
		Applying the variation of constant formula to \eqref{S11}, we get that for each $s\in \R$ and $\tau\geq s$,
		\begin{align}\label{S12}
			\|\u(\tau;s,\u_s)\|^2_{\H} &= e^{2\alpha(s-\tau)}\|\u_s\|^2_{\H} -2\mu\int_{s}^{\tau}e^{2\alpha(\xi-\tau)}\|\nabla\u(\xi;s,\u_s)\|^2_{\H}\d\xi\nonumber\\&\quad-2\beta\int_{s}^{\tau}e^{2\alpha(\xi-\tau)}\|\u(\xi;s,\u_s)\|^{r+1}_{\wi \L^{r+1}}\d\xi +2\int_{s}^{\tau}e^{2\alpha(\xi-\tau)}\left\langle\f(\cdot,\xi),\u(\xi;s,\u_s)\right\rangle\d\xi.
		\end{align}
		Using \eqref{S10} in \eqref{S12}, we get 
		\begin{align}\label{S13}
			\|\tilde{\u}\|^2_{\H}&=\|\u(\tau;\tau-k,\tilde{\u}_k)\|^2_{\H} \nonumber\\&= e^{-2\alpha k}\|\tilde{\u}_k\|^2_{\H} -2\mu\int_{\tau-k}^{\tau}e^{2\alpha(\xi-\tau)}\|\nabla\u(\xi;\tau-k,\tilde{\u}_k)\|^2_{\H}\d\xi\nonumber\\&\quad-2\beta\int_{\tau-k}^{\tau}e^{2\alpha(\xi-\tau)}\|\u(\xi;\tau-k,\tilde{\u}_k)\|^{r+1}_{\wi \L^{r+1}}\d\xi\nonumber\\&\quad+2\int_{\tau-k}^{\tau}e^{2\alpha(\xi-\tau)}\left\langle\f(\cdot,\xi),\u(\xi;\tau-k,\tilde{\u}_k)\right\rangle\d\xi.
		\end{align}
		Similarly, \eqref{S6} and \eqref{S12} imply that
		\begin{align}\label{S14}
			\|\u(\tau;\tau-t_n,\u_n^0)\|^2_{\H}&=\|\u(\tau;\tau-k,\u(\tau-k;\tau-t_n,\u_n^0))\|^2_{\H} \nonumber\\&= e^{-2\alpha k}\|\u(\tau-k;\tau-t_n,\u_n^0)\|^2_{\H} \nonumber\\&\quad-2\mu\int_{\tau-k}^{\tau}e^{2\alpha(\xi-\tau)}\|\nabla\u(\xi;\tau-k,\u(\tau-k;\tau-t_n,\u_n^0))\|^2_{\H}\d\xi\nonumber\\&\quad-2\beta\int_{\tau-k}^{\tau}e^{2\alpha(\xi-\tau)}\|\u(\xi;\tau-k,\u(\tau-k;\tau-t_n,\u_n^0))\|^{r+1}_{\wi \L^{r+1}}\d\xi\nonumber\\&\quad+2\int_{\tau-k}^{\tau}e^{2\alpha(\xi-\tau)}\left\langle\f(\cdot,\xi),\u(\xi;\tau-k,\u(\tau-k;\tau-t_n,\u_n^0))\right\rangle\d\xi.
		\end{align}
		Our next aim is to pass the limit as $n\to \infty$ to \eqref{S14}. We estimate the first term by applying the variation of constants formula again to \eqref{S*3} as
		\begin{align}\label{S15}
			&e^{-2\alpha k}\|\u(\tau-k;\tau-t_n,\u_n^0)\|^2_{\H}\leq e^{-\alpha k}\|\u(\tau-k;\tau-t_n,\u_n^0)\|^2_{\H} \nonumber\\&\quad\leq e^{-\alpha t_n}\|\u_n^0\|^2_{\H}+ \frac{e^{-\alpha \tau}}{\min\{\mu,\alpha\}}\int_{-\infty}^{\tau-k}e^{\alpha \xi}\|\f(\cdot,\xi)\|^2_{\V'}\d\xi.
		\end{align}
		Since $\u_n^0\in K(\tau-t_n)$, we get
		\begin{align}\label{S16}
			e^{-\alpha t_n}\|\u_n^0\|^2_{\H}\leq e^{-\alpha t_n}\|K(\tau-t_n)\|^2_{\H}\to 0 \ \text{ as }\  n\to \infty.
		\end{align}
		Making use of the fact given in \eqref{S16}, we obtain from \eqref{S15} that
		\begin{align}\label{S17}
			\limsup_{n\to\infty}e^{-2\alpha k}\|\u(\tau-k,\tau-t_n,\u_n^0)\|^2_{\H} \leq \frac{e^{-\alpha \tau}}{\min\{\mu,\alpha\}}\int_{-\infty}^{\tau-k}e^{\alpha \xi}\|\f(\cdot,\xi)\|^2_{\V'}\d\xi.
		\end{align} 
		By \eqref{S9}, we have
		\begin{align}\label{S18}
			&	\lim_{n\to\infty} 2\int_{\tau-k}^{\tau}e^{2\alpha(\xi-\tau)}\left\langle\f(\cdot,\xi),\u(\xi;\tau-k,\u(\tau-k;\tau-t_n,\u_n^0))\right\rangle\d\xi \nonumber\\&\quad= 2\int_{\tau-k}^{\tau}e^{2\alpha(\xi-\tau)}\left\langle\f(\cdot,\xi),\u(\xi;\tau-k,\tilde{\u}_k)\right\rangle\d\xi.
		\end{align}
		Since $e^{-2\alpha k}\leq e^{2\alpha (\xi-\tau)}\leq 1,$ for $\xi\in(\tau-k,\tau)$, 
		\begin{align*}
			\left(\int_{\tau-k}^{\tau}e^{2\alpha (\xi-\tau)}\|\cdot\|^2_{\H}\d \xi\right)^{1/2} \text{ and } \left(\int_{\tau-k}^{\tau}e^{2\alpha(\xi-\tau)}\|\cdot\|^{r+1}_{\wi \L^{r+1}}\d \xi\right)^{\frac{1}{r+1}} 
		\end{align*}
		define norms, which are equivalent to the standard norms in 
		$$\mathrm{L}^2(\tau-k,\tau;\H) \ \text{ and }\  \mathrm{L}^{r+1}(\tau-k,\tau;\wi\L^{r+1}),$$ 
		respectively. Weak lower semicontinuity property of norm with \eqref{S9} and \eqref{S9'} imply
		\iffalse 
		\begin{align*}
			&\liminf_{n\to\infty}	\left\{2\mu\int_{\tau-k}^{\tau}e^{2\alpha(\xi-\tau)}\|\nabla\u(\xi;\tau-k,\u(\tau-k;\tau-t_n,\u_n^0))\|^2_{\H}\d\xi\right\} \nonumber\\&\quad\geq 2\mu\int_{\tau-k}^{\tau}e^{2\alpha(\xi-\tau)}\|\nabla\u(\xi;\tau-k,\tilde{\u}_k)\|^2_{\H}\d\xi,
		\end{align*} 
		or
		\fi 
		\begin{align}\label{S19}
			&\limsup_{n\to\infty}	\left\{-2\mu\int_{\tau-k}^{\tau}e^{2\alpha(\xi-\tau)}\|\nabla\u(\xi;\tau-k,\u(\tau-k;\tau-t_n,\u_n^0))\|^2_{\H}\d\xi\right\} \nonumber\\&\quad\leq - 2\mu\int_{\tau-k}^{\tau}e^{2\alpha(\xi-\tau)}\|\nabla\u(\xi;\tau-k,\tilde{\u}_k)\|^2_{\H}\d\xi,
		\end{align} 
		and 
		\begin{align}\label{S21}
			&	\limsup_{n\to\infty}\left\{-2\beta\int_{\tau-k}^{\tau}e^{2\alpha(\xi-\tau)}\|\u(\xi;\tau-k,\u(\tau-k;\tau-t_n,\u_n^0))\|^{r+1}_{\wi \L^{r+1}}\right\}\nonumber\\&\quad\leq- 2\beta\int_{\tau-k}^{\tau}e^{2\alpha(\xi-\tau)}\|\u(\xi;\tau-k,\tilde{\u}_k)\|^{r+1}_{\wi \L^{r+1}}\d\xi,
		\end{align}
		respectively. Combining \eqref{S17}-\eqref{S21}, and using it in \eqref{S14}, we get
		\begin{align}\label{S22}
			&\limsup_{n\to\infty}\|\u(\tau;\tau-t_n,\u_n^0)\|^2_{\H}\nonumber\\&\leq\frac{e^{-\alpha \tau}}{\min\{\mu,\alpha\}}\int_{-\infty}^{\tau-k} e^{\alpha\xi}  \|\f(\cdot,\xi)\|^2_{\V'}\d \xi-2\mu\int_{\tau-k}^{\tau}e^{2\alpha(\xi-\tau)}\|\nabla\u(\xi;\tau-k,\tilde{\u}_k)\|^2_{\H}\d\xi\nonumber\\&\quad-2\beta\int_{\tau-k}^{\tau}e^{2\alpha(\xi-\tau)}\|\u(\xi;\tau-k,\tilde{\u}_k)\|^{r+1}_{\wi \L^{r+1}}\d\xi+2\int_{\tau-k}^{\tau}e^{2\alpha(\xi-\tau)}\left\langle\f(\cdot,\xi),\u(\xi;\tau-k,\tilde{\u}_k)\right\rangle\d\xi.
		\end{align}
		Now, using \eqref{S13} in \eqref{S22}, we get
		\begin{align}\label{S23}
			\limsup_{n\to\infty}\|\u(\tau;\tau-t_n,\u_n^0)\|^2_{\H}\leq\frac{e^{-\alpha \tau}}{\min\{\mu,\alpha\}}\int_{-\infty}^{\tau-k} e^{\alpha\xi}  \|\f(\cdot,\xi)\|^2_{\V'}\d \xi+\|\tilde{\u}\|^2_{\H}.
		\end{align}
		Passing the limit as $k\to \infty$ in \eqref{S23}, we deduce
		\begin{align}\label{S24}
			\limsup_{n\to\infty}\|\u(\tau;\tau-t_n,\u_n^0)\|^2_{\H}\leq\|\tilde{\u}\|^2_{\H}.
		\end{align}
		By \eqref{S3}-\eqref{S4} and \eqref{S24}, we conclude that
		\begin{align*}
			\lim_{n\to\infty}\|\u(\tau;\tau-t_n,\u_n^0)\|_{\H}=\|\tilde{\u}\|_{\H},
		\end{align*}
		which completes the proof.
	\end{proof}
	We define 
	\begin{align}\label{ab_H1}
		\mathcal{K}_{0}(\tau)=\{\u\in\H:\|\u\|^2_{\H}\leq\mathcal{M}_{0}(\tau)\},
	\end{align}
	where $\mathcal{M}_{0}(\tau)$ is given by
	\begin{align}\label{ab_H2}
		\mathcal{M}_{0}(\tau)=1+ \frac{1}{\min\{\mu,\alpha\}}\int_{-\infty}^{\tau} e^{\alpha(\xi-\tau)} \|\f(\cdot,\xi)\|^2_{\V'}\d \xi.
	\end{align}
	It is immediate from Theorem \ref{D-abH} that $\mathcal{K}_0$ is $\mathfrak{D}_0$-pullback absorbing set of $\Phi_0$ in $\H$. Also, Theorem \ref{D-asymp} implies that $\Phi_0$ is $\mathfrak{D}_0$-pullback asymptotically compact. Hence, by the concept introduced in \cite{CLR} (see Theorem 7 and Remark 14 in \cite{CLR}), we obtain the main result of this section.
	\begin{theorem}\label{Main_T1}
		For $d=2$ with $r\geq1$, $d=3$ with $r>3$ and $d=r=3$ with $2\beta\mu\geq1$, assume that $\f\in\mathrm{L}^2_{\mathrm{loc}}(\R;\V')$ and satisfying \eqref{forcing}. Then, there exists a unique global $\mathfrak{D}_0$-pullback attractor for the continuous cocycle $\Phi_0$ given by \eqref{phi}.
	\end{theorem}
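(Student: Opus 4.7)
The plan is to combine the absorbing set estimate from Theorem \ref{D-abH} with the $\mathfrak{D}_0$-pullback asymptotic compactness from Theorem \ref{D-asymp}, then invoke the abstract existence result for pullback attractors (Theorem 7 and Remark 14 of \cite{CLR}). Since continuity of $\Phi_0(t,\tau,\cdot):\H\to\H$ is already noted after \eqref{phi1}, the three standing hypotheses of that abstract theorem (continuity, existence of a pullback absorbing family in $\mathfrak{D}_0$, and pullback asymptotic compactness) reduce to verifying that $\mathcal{K}_0$ defined in \eqref{ab_H1}--\eqref{ab_H2} is itself a member of $\mathfrak{D}_0$ and does absorb every element of $\mathfrak{D}_0$.

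First I would check that $\mathcal{K}_0\in\mathfrak{D}_0$. From \eqref{ab_H2}, for any $\tau\in\R$,
\begin{align*}
e^{\alpha t}\|\mathcal{K}_0(\tau+t)\|_{\H}^2 \leq e^{\alpha t}+\frac{e^{-\alpha\tau}}{\min\{\mu,\alpha\}}\int_{-\infty}^{\tau+t}e^{\alpha\xi}\|\f(\cdot,\xi)\|_{\V'}^2\d\xi,
\end{align*}
and both terms on the right tend to $0$ as $t\to-\infty$: the first trivially, and the second by the integrability assumption \eqref{forcing} together with the dominated convergence theorem. Hence $\mathcal{K}_0$ satisfies \eqref{D_0}.

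Next, the absorbing property is an immediate rereading of Theorem \ref{D-abH}: for every $\tau\in\R$ and every $K\in\mathfrak{D}_0$, if $t\geq\mathscr{T}(\tau,K)$ and $\u^0\in K(\tau-t)$, then \eqref{S1*} gives $\|\Phi_0(t,\tau-t,\u^0)\|_{\H}^2\leq\mathcal{M}_0(\tau)$, so that $\Phi_0(t,\tau-t,K(\tau-t))\subset\mathcal{K}_0(\tau)$. Combined with the $\mathfrak{D}_0$-pullback asymptotic compactness of $\Phi_0$ provided by Theorem \ref{D-asymp} and the continuity of the cocycle, all hypotheses of the abstract theorem are met, and it yields a unique global $\mathfrak{D}_0$-pullback attractor, explicitly representable as
\begin{align*}
\mathscr{A}(\tau)=\bigcap_{s\geq 0}\overline{\bigcup_{t\geq s}\Phi_0(t,\tau-t,\mathcal{K}_0(\tau-t))}^{\H},\qquad\tau\in\R.
\end{align*}

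I do not anticipate any real obstacle in this last step: the heavy lifting has already been done in Theorems \ref{D-abH} and \ref{D-asymp}, where the energy-equation argument of Ball circumvents the absence of compact Sobolev embeddings on $\R^d$. The only mild point requiring care is confirming that the constructed absorbing family $\mathcal{K}_0$ lies in the universe $\mathfrak{D}_0$, which would fail without the exponential weight $e^{\alpha\xi}$ provided by the Darcy term $\alpha\u$; this is precisely the place where the mechanism highlighted in the abstract ultimately enters. Uniqueness then follows from the minimality of the pullback attractor among closed sets that pullback-attract all of $\mathfrak{D}_0$.
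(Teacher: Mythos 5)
Your proposal is correct and follows essentially the same route as the paper: defining the absorbing family $\mathcal{K}_0$ via Theorem \ref{D-abH}, invoking the asymptotic compactness of Theorem \ref{D-asymp}, and applying the abstract result of \cite{CLR}. The only difference is that you explicitly verify $\mathcal{K}_0\in\mathfrak{D}_0$ (correctly, via \eqref{forcing} and dominated convergence), a detail the paper leaves implicit.
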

	
	\section{Random dynamical system for non-autonomous stochastic CBF equations}\label{sec4}\setcounter{equation}{0}
	Let us consider the stochastic CBF equations perturbed by multiplicative white noise for $t\geq \tau,$ $\tau\in\mathbb{R}$ as follows:
	\begin{equation}\label{SCBF}
		\left\{
		\begin{aligned}
			\frac{\d\u_{\varepsilon}(t)}{\d t}+\mu \A\u_{\varepsilon}(t)+\B(\u_{\varepsilon}(t))+\alpha\u_{\varepsilon}(t) +\beta\mathcal{C}(\u_{\varepsilon}(t))&=\f(t) +\varepsilon\u_{\varepsilon}(t)\circ\frac{\d \W(t)}{\d t} , \\ 
			\u_{\varepsilon}(x,\tau)&=\u_{\varepsilon,\tau}(x)=\u_{0}(x),
		\end{aligned}
		\right.
	\end{equation}
	$x\in \R^d,$	where  $\varepsilon>0$ and $\circ$ stands for the fact that the stochastic integral is understood in the sense of Stratonovich. The probability space, which will be used later, is denoted by
	\begin{align}\label{Omega}
		\Omega=\{\omega\in\mathrm{C}(\R,\R):\omega(0)=0\}, 
	\end{align}
	where	$\mathscr{F}$ is the Borel sigma-algebra induced by the compact-open topology of $\Omega$, and $\mathbb{P}$ is the two-sided Wiener measure on $(\Omega,\mathscr{F})$. Also, define $\{\vartheta_{t}\}_{t\in\R}$ by 
	\begin{equation}\label{vartheta}
		\vartheta_{t}\omega(\cdot)=\omega(\cdot+t) -\omega(t), \ \ \   t\in\R\ \text{ and }\ \omega\in\Omega.
	\end{equation}
	Hence, $(\Omega,\mathscr{F},\mathbb{P},\{\vartheta_{t}\}_{t\in\R})$ is a metric dynamical system. Moreover, there exists a $\vartheta_{t}$-invariant set $\widetilde{\Omega}\subseteq\Omega$ of full $\mathbb{P}$ measure such that for each $\omega\in\widetilde{\Omega},$  
	\begin{align}\label{omega}
		\frac{\omega(t)}{t}\to 0 \ \text{ as }\  t\to\pm\infty.
	\end{align} 
	Throughout this work, we will not distinguish between $\widetilde{\Omega}$ and $\Omega$. 
	
	Next, for a given $t\in\R$ and $\omega\in \Omega$, let $\z(t,\omega)=e^{-\varepsilon\omega(t)}$. Then, $\z$ satisfies the equation 
	\begin{align}\label{Trans1}
		\d\z=-\varepsilon\z\circ\d \W.
	\end{align}
	Let $\v_{\varepsilon}$ be a new variable given by 
	\begin{align}\label{Trans2}
		\v_{\varepsilon}(t,\tau,\omega,\v_{\varepsilon,\tau})=\z(t,\omega)\u_{\varepsilon}(t,\tau,\omega,\u_{\varepsilon,\tau}) \ \ \
		\text{ with }
		\ \ \	\v_{\varepsilon,\tau}=\z(\tau,\omega)\u_{\varepsilon,\tau}.
	\end{align}
	Then $\v_{\varepsilon}(\cdot)$ satisfies the following:
	\begin{equation}\label{CCBF}
		\left\{
		\begin{aligned}
			\frac{\d\v_{\varepsilon}(t)}{\d t}+\mu \A\v_{\varepsilon}(t)+\frac{1}{\z(t,\omega)}\B\big(\v_{\varepsilon}(t)\big)&+\alpha\v_{\varepsilon}(t)+\frac{\beta}{[\z(t,\omega)]^{r-1}} \mathcal{C}\big(\v_{\varepsilon}(t)\big)\\&=\z(t,\omega)\f(t) , \quad t\geq \tau, \\ 
			\v_{\varepsilon}(x,\tau)&=\v_{\varepsilon,\tau}(x), x\in \R^d, \tau\in\mathbb{R},
		\end{aligned}
		\right.
	\end{equation}
	in $\V'+\widetilde{\L}^{\frac{r+1}{r}}$. By a standard Faedo-Galerkin approximation method, it can be proved that for all $t>\tau, \tau\in\R,$ and for every $\v_{\varepsilon,\tau}\in\H$, \eqref{CCBF} has a unique solution $\v_{\varepsilon}\in\mathrm{C}([\tau,+\infty);\H)\cap\mathrm{L}^2_{\mathrm{loc}}(\tau,+\infty;\V)\cap\mathrm{L}^{r+1}_{\mathrm{loc}}(\tau,+\infty;\widetilde{\L}^{r+1})$ (cf. \cite{HR,PAM}, etc). Moreover, $\v_{\varepsilon}(t,\tau,\omega,\v_{\varepsilon,\tau})$ is continuous with respect to initial data $\v_{\varepsilon,\tau}$ (see Lemma \ref{Continuity} below) and $(\mathscr{F},\mathscr{B}(\H))$-measurable in $\omega\in\Omega.$ This ensure us to define a cocycle $\Phi_{\varepsilon}:\R^+\times\R\times\Omega\times\H\to\H$ for the system \eqref{SCBF} by using \eqref{Trans2}. Given $t\in\R^+, \tau\in\R, \omega\in\Omega$ and $\u_{\varepsilon,\tau}\in\H$, let
	\begin{align}\label{Phi1}
		\Phi_{\varepsilon}(t,\tau,\omega,\u_{\varepsilon,\tau}) =\u_{\varepsilon}(t+\tau,\tau,\vartheta_{-\tau}\omega,\u_{\varepsilon,\tau})=\frac{\v_{\varepsilon}(t+\tau,\tau,\vartheta_{-\tau}\omega,\v_{\varepsilon,\tau})}{\z(t+\tau,\vartheta_{-\tau}\omega)},
	\end{align}
	where $\v_{\varepsilon,\tau}=\z(\tau,\vartheta_{-\tau}\omega)\u_{\varepsilon,\tau}$. By \eqref{Phi1}, for every $t\geq0,\tau\geq0, s\in\R,\omega\in\Omega$ and $\u_{\varepsilon,0}\in\H$, we obtain 
	\begin{align}\label{Phi2}
		\Phi_{\varepsilon}(t+\tau,s,\omega,\u_{\varepsilon,0}) =\frac{\v_{\varepsilon}(t+\tau+s,\tau+s,\vartheta_{-s}\omega,\v_{\varepsilon,0})}{\z(t+\tau+s,\vartheta_{-s}\omega)},
	\end{align}
	where $\v_{\varepsilon,0}=\z(s,\vartheta_{-s}\omega)\u_{\varepsilon,0}$. Similarly, we get
	\begin{align}\label{Phi3}
		&\Phi_{\varepsilon}(t,\tau+s,\vartheta_{\tau}\omega,\Phi_{\varepsilon}(\tau,s,\omega,\u_{\varepsilon,0}))\nonumber\\&\quad =\frac{\v_{\varepsilon}(t+\tau+s,\tau+s,\vartheta_{-s}\omega,\z(\tau+s,\vartheta_{-s}\omega)\Phi_{\varepsilon}(\tau,s,\omega,\u_{\varepsilon,0}))}{\z(t+\tau+s,\vartheta_{-s}\omega)}\nonumber\\&\quad=\frac{\v_{\varepsilon}(t+\tau+s,\tau+s,\vartheta_{-s}\omega,\v_{\varepsilon}(\tau+s,s,\vartheta_{-s}\omega,\v_{\varepsilon,0}))}{\z(t+\tau+s,\vartheta_{-s}\omega)}
		\nonumber\\&\quad=\frac{\v_{\varepsilon}(t+\tau+s,s,\vartheta_{-s}\omega,\v_{\varepsilon,0})}{\z(t+\tau+s,\vartheta_{-s}\omega)}.
	\end{align}
	Therefore, \eqref{Phi2} and \eqref{Phi3} imply that
	\begin{align}\label{Phi4}
		\Phi_{\varepsilon}(t+\tau,s,\omega,\u_{\varepsilon,0})=\Phi_{\varepsilon}(t,\tau+s,\vartheta_{\tau}\omega,\Phi_{\varepsilon}(\tau,s,\omega,\u_{\varepsilon,0})).
	\end{align}
	Since $\v_{\varepsilon}$ is a measurable solution to the system \eqref{CCBF} and continuous with respect to initial data in $\H$, we can see from \eqref{Phi4} that $\Phi_{\varepsilon}$ is continuous cocycle on $\H$ over $(\R,\{\theta_t\}_{t\in\R})$ and $(\Omega,\mathscr{F},\mathbb{P},\{\vartheta_{t}\}_{t\in\R})$, where $\{\theta_t\}_{t\in\R}$ and $\{\vartheta_t\}_{t\in\R}$ are given by \eqref{theta} and \eqref{vartheta}, respectively.
	
	Assume that $\widetilde{D}=\{\widetilde{D}(\tau,\omega):\tau\in\R,\omega\in\Omega\}$ is a family of non-empty subsets of $\H$ satisfying, for every $c>0, \tau\in\R$ and $\omega\in\Omega$, 
	\begin{align}\label{D_1}
		\lim_{t\to\infty}e^{-ct}\|\widetilde{D}(\tau-t,\vartheta_{-t}\omega)\|^2_{\H}=0.
	\end{align}
	Let $\mathfrak{D}$ be the set of all tempered families of bounded non-empty subsets of $\H$, that is,
	\begin{align}\label{D_11}
		\mathfrak{D}=\{\widetilde{D}=\{\widetilde{D}(\tau,\omega):\tau\in\R\text{ and }\omega\in\Omega\}:\widetilde{D} \text{ satisfying } \eqref{D_1}\}.
	\end{align} 
	It is immediate that $\mathfrak{D}$ is neighborhood closed (Definition 2.2, \cite{PeriodicWang}). 
	
	\begin{assumption}
		For proving the results of this section, we need following assumptions on the external forcing term $\f$. There exists a number $\delta\in[0,\alpha)$ such that
		\begin{itemize}
			\item [(i)] 	\begin{align}\label{forcing1}
				\int_{-\infty}^{\tau} e^{\delta\xi}\|\f(\cdot,\xi)\|^2_{\V'}\d \xi<\infty, \ \ \text{ for all } \tau\in\R.
			\end{align}
			\item [(ii)] for every $c>0$
			\begin{align}\label{forcing2}
				\lim_{s\to-\infty}e^{cs}\int_{-\infty}^{0} e^{\delta\xi}\|\f(\cdot,\xi+s)\|^2_{\V'}\d \xi=0.
			\end{align}
		\end{itemize}
		Note that \eqref{forcing2} implies \eqref{forcing1} for $\f\in \mathrm{L}^2_{\mathrm{loc}}(\mathbb{R};\V')$. Also the conditions \eqref{forcing1}-\eqref{forcing2}  do not need $\f$ to be bounded in $\V'$ at $\pm\infty$ (cf. \cite{PeriodicWang}).
	\end{assumption}

	\begin{lemma}\label{Continuity}
		For $d=2$ with $r\geq1$, $d=3$ with $r>3$ and $d=r=3$ with $2\beta\mu\geq1$, assume that $\f\in \mathrm{L}^2_{\mathrm{loc}}(\mathbb{R};\V')$. Then, the solution of \eqref{CCBF} is $(\mathscr{F},\mathscr{B}(\H))$-measurable and continuous in initial data $\v_{\varepsilon,\tau}(x).$
	\end{lemma}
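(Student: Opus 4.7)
The plan is to exploit that for each fixed $\omega\in\Omega$ the coefficients $1/\z(t,\omega)$ and $1/[\z(t,\omega)]^{r-1}$ appearing in \eqref{CCBF} are continuous and bounded on every compact $t$-interval, so \eqref{CCBF} is pathwise a deterministic CBF-type evolution. Its well-posedness in $\mathrm{C}([\tau,\infty);\H)\cap\mathrm{L}^2_{\mathrm{loc}}(\tau,\infty;\V)\cap\mathrm{L}^{r+1}_{\mathrm{loc}}(\tau,\infty;\wi\L^{r+1})$ follows by the Faedo--Galerkin scheme already used for \eqref{CBF} in Section \ref{sec3}. What remains is (i) continuity of the solution map in the initial datum and (ii) $(\mathscr{F},\mathscr{B}(\H))$-measurability of $\omega\mapsto\v_\varepsilon(t,\tau,\omega,\v_{\varepsilon,\tau})$.

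For (i), I would fix $\omega,\tau$, take $\v_\tau^1,\v_\tau^2\in\H$, set $\v^i(t):=\v_\varepsilon(t,\tau,\omega,\v_\tau^i)$ and $\w:=\v^1-\v^2$. Subtracting the two copies of \eqref{CCBF} and testing with $\w$, using \eqref{441} together with the antisymmetry \eqref{b0}, yields
\begin{align*}
\tfrac{1}{2}\tfrac{\d}{\d t}\|\w\|_{\H}^2 + \mu\|\nabla\w\|_{\H}^2 + \alpha\|\w\|_{\H}^2 + \tfrac{\beta}{[\z(t,\omega)]^{r-1}}\langle\mathcal{C}(\v^1)-\mathcal{C}(\v^2),\w\rangle = -\tfrac{1}{\z(t,\omega)}\,b(\w,\v^2,\w).
\end{align*}
The $\mathcal{C}$-term is nonnegative by \eqref{MO_c}, and the task is to control $[\z(t,\omega)]^{-1}|b(\w,\v^2,\w)|$ by the available dissipation plus a Gr\"onwall remainder. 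In $d=2$ with $r\geq 1$, \eqref{b1} and Young give a bound $\tfrac{\mu}{2}\|\nabla\w\|_\H^2 + C(\omega,t)\|\nabla\v^2\|_\H^2\|\w\|_\H^2$, and Gr\"onwall closes since $\v^2\in\mathrm{L}^2_{\mathrm{loc}}(\tau,\infty;\V)$. In $d=3$ with $r>3$, I would bound $|b(\w,\v^2,\w)|\leq C\|\nabla\w\|_\H\,\||\v^2|\w\|_\H$ and then interpolate $\||\v^2|\w\|_\H^2\leq \||\v^2|^{(r-1)/2}\w\|_\H^{4/(r-1)}\|\w\|_\H^{2(r-3)/(r-1)}$ by H\"older, so that the $\||\v^2|^{(r-1)/2}\w\|_\H^2$ piece is absorbed by the dissipation from \eqref{MO_c} and only an $\|\w\|_\H^2$ remainder (with locally integrable coefficient) is left for Gr\"onwall. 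In $d=r=3$, the same bilinear bound with Young gives $[\z(t,\omega)]^{-1}|b(\w,\v^2,\w)|\leq \eta\|\nabla\w\|_\H^2 + \tfrac{1}{4\eta[\z(t,\omega)]^2}\||\v^2|\w\|_\H^2$; choosing $\eta=1/(2\beta)$ shows that the combined dissipation $\mu\|\nabla\w\|_\H^2+(\beta/(2[\z(t,\omega)]^2))\||\v^2|\w\|_\H^2$ absorbs this precisely when $2\beta\mu\geq 1$, in fact yielding the sharper inequality $\tfrac{\d}{\d t}\|\w\|_\H^2+2\alpha\|\w\|_\H^2\leq 0$. In all three cases one obtains $\|\w(t)\|_\H^2\leq e^{\kappa(\omega,t,\tau)}\|\w(\tau)\|_\H^2$, proving continuity.

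For (ii), the only $\omega$-dependence in \eqref{CCBF} is through $\z(\cdot,\omega)=e^{-\varepsilon\omega(\cdot)}$, which is continuous in $\omega$ with respect to the compact-open topology on $\Omega$. A slight variant of the estimate in (i), allowing both the initial datum and the coefficients $1/\z(\cdot,\omega)$, $1/[\z(\cdot,\omega)]^{r-1}$ to vary simultaneously, shows that $\omega_n\to\omega$ in $\Omega$ implies $\v_\varepsilon(t,\tau,\omega_n,\v_{\varepsilon,\tau})\to \v_\varepsilon(t,\tau,\omega,\v_{\varepsilon,\tau})$ in $\H$; continuity of this map into $\H$ yields the required Borel, hence $(\mathscr{F},\mathscr{B}(\H))$-, measurability. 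Alternatively, one may invoke measurability of the Faedo--Galerkin approximations $\v_\varepsilon^m$ (a finite-dimensional ODE system whose coefficients are measurable in $\omega$) and pass to the limit using the pathwise uniqueness just established.

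The main obstacle I foresee is the critical case $d=r=3$, where the absorption of the bilinear term by the combined dissipation has zero margin: Young's inequality produces a weight $1/(4\eta[\z(t,\omega)]^2)$ that must match exactly the prefactor $\beta/(2[\z(t,\omega)]^2)$ coming from \eqref{MO_c}, and the constraint $\eta\leq\mu$ then forces the hypothesis $2\beta\mu\geq 1$. Here the full form of \eqref{MO_c} (retaining both $\v^1$- and $\v^2$-dependent dissipations) and careful tracking of the $\z(t,\omega)$-prefactors are essential; the other cases $d=2$ or $d=3$ with $r>3$ are considerably softer, with the nonlinear damping providing a strict margin in the absorption.
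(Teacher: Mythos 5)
Your proposal is correct and follows essentially the same route as the paper: subtract the two equations, test with $\w$, discard the monotone $\mathcal{C}$-difference via \eqref{MO_c} while retaining half the dissipation, and split into the three cases with exactly the same absorption mechanism (your H\"older-then-Young interpolation for $d=3$, $r>3$ is the same estimate the paper performs pointwise, and your treatment of the critical case $d=r=3$ with $\eta=1/(2\beta)$ matches the paper's computation verbatim). Your sketch of the measurability part is a reasonable supplement; the paper's proof only addresses continuity in the initial data and leaves measurability to the Galerkin construction.
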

	\begin{proof}
		Let $\v^1_{\varepsilon}(t)$ and $\v^2_{\varepsilon}(t)$ be two solutions of \eqref{CCBF}. Then $\w(\cdot)=\v^1_{\varepsilon}(\cdot)-\v^2_{\varepsilon}(\cdot)$ with $\w(\tau)=\v^1_{\varepsilon,\tau}(x)-\v^2_{\varepsilon,\tau}(x)$ satisfies
		\begin{align}\label{Conti1}
			&	\frac{\d\w(t)}{\d t}+\mu \A\w(t)+\alpha\w(t)\nonumber\\&=-\frac{1}{\z(t,\omega)}\left\{\B\big(\v_{\varepsilon}^1(t)\big)-\B\big(\v_{\varepsilon}^2(t)\big)\right\} -\frac{\beta}{[\z(t,\omega)]^{r-1}}\left\{\mathcal{C}\big(\v_{\varepsilon}^1(t)\big)-\mathcal{C}\big(\v_{\varepsilon}^2(t)\big)\right\},
		\end{align}
		for a.e. $t\in[\tau,\tau+T]$	in $\V'+\widetilde{\L}^{\frac{r+1}{r}}$. Multiplying \eqref{Conti1} with $\w(t)$ and then integrating over $\R^d$, we get
		\begin{align}\label{Conti2}
			&	\frac{1}{2}\frac{\d}{\d t} \|\w(t)\|^2_{\H} +\mu \|\nabla\w(t)\|^2_{\H} + \alpha\|\w(t)\|^2_{\H} \nonumber\\&=-\frac{1}{\z(t,\omega)}\left\langle\B\big(\v_{\varepsilon}^1(t)\big)-\B\big(\v_{\varepsilon}^2(t)\big), \w(t)\right\rangle -\frac{\beta}{[\z(t,\omega)]^{r-1}}\left\langle\mathcal{C}\big(\v_{\varepsilon}^1(t)\big)-\mathcal{C}\big(\v_{\varepsilon}^2(t)\big),\w(t)\right\rangle ,
		\end{align}
		for a.e. $t\in[\tau,\tau+T] \text{ with } T>0$. From \eqref{MO_c}, we have
		\begin{align}\label{Conti3}
			-\beta \frac{1}{[\z(t,\omega)]^{r-1}}\left\langle\mathcal{C}\big(\v_{\varepsilon}^1\big)-\mathcal{C}\big(\v_{\varepsilon}^2\big),\w\right\rangle\leq - \frac{\beta}{2}\frac{1}{[\z(t,\omega)]^{r-1}}\||\w||\v_{\varepsilon}^2|^{\frac{r-1}{2}}\|^2_{\H}\leq0,
		\end{align}
		\vskip 2mm
		\noindent
		\textbf{Case I:} \textit{$d=2$ and $r\geq1$.} Using \eqref{b1}, \eqref{441} and Young's inequality, we obtain
		\begin{align}\label{Conti4}
			\left| \frac{1}{\z(t,\omega)}\left\langle\B\big(\v_{\varepsilon}^1\big)-\B\big(\v_{\varepsilon}^2\big), \w\right\rangle\right|&=\left|\frac{1}{ \z(t,\omega)}\left\langle\B\big(\w,\w \big), \v_{\varepsilon}^2\right\rangle\right|\nonumber\\&\leq\frac{\mu}{2}\|\nabla\w\|^2_{\H}+\frac{C}{[\z(t,\omega)]^4}\|\v_{\varepsilon}^2\|^4_{\widetilde{\L}^4}\|\w\|^2_{\H}\nonumber\\&\leq\frac{\mu}{2}\|\nabla\w\|^2_{\H}+Ce^{4\varepsilon\omega(t)}\|\v_{\varepsilon}^2\|^2_{\H}\|\nabla\v_{\varepsilon}^2\|^2_{\H}\|\w\|^2_{\H}.
		\end{align}
		Making use of \eqref{Conti3} and \eqref{Conti4} in \eqref{Conti2}, we get
		\begin{align*}
			&	\frac{\d}{\d t} \|\w(t)\|^2_{\H}  \leq Ce^{4\varepsilon\omega(t)}\|\v_{\varepsilon}^2\|^2_{\H}\|\nabla\v_{\varepsilon}^2\|^2_{\H}\|\w(t)\|^2_{\H},
		\end{align*} for a.e. $t\in[\tau,\tau+T]$ and an application of Gronwall's inequality implies 
		\begin{align*}
			\|\w(t)\|^2_{\H}&\leq e^{C\int_{\tau}^{t}e^{4\varepsilon\omega(s)}\|\v_{\varepsilon}^2(s)\|^2_{\H}\|\nabla\v_{\varepsilon}^2(s)\|^2_{\H}\d s}\|\w(\tau)\|^2_{\H}\nonumber\\&\leq e^{C\exp\left\{4\varepsilon\sup\limits_{s\in[\tau,\tau+T]}\omega(s)\right\}\sup\limits_{s\in[\tau,\tau+T]}\|\v_{\varepsilon}^2(s)\|^2_{\H}\int\limits_{\tau}^{\tau+T}\|\nabla\v_{\varepsilon}^2(s)\|^2_{\H}\d s}\|\w(\tau)\|^2_{\H}.
		\end{align*} Hence the proof is completed.
		\vskip 2mm
		\noindent
		\textbf{Case II:} \textit{$d= 3$ and $r>3$.} We estimate $|(\B(\w,\w),\v_{\varepsilon}^2)|$ using H\"older's and Young's inequalities as 
		\begin{align}\label{3d-ab12}
			\left|\frac{1}{\z(t,\omega)}(\B(\w,\w),\v_{\varepsilon}^2)\right|&\leq \frac{1}{\z(t,\omega)}\||\v_{\varepsilon}^2||\w|\|_{\H}\|\nabla\w\|_{\H}\nonumber\\&\leq\frac{\mu}{2}\|\nabla\w\|_{\H}^2+\frac{1}{2\mu[\z(t,\omega)]^2}\||\v_{\varepsilon}^2||\w|\|_{\H}^2. 
		\end{align}
		We  estimate the final term from \eqref{3d-ab12} using H\"older's and Young's inequalities as  (similarly as in \cite{MTM1})
		\begin{align}\label{3d-ab13}
			&	\frac{1}{[\z(t,\omega)]^2}\int_{\R^d}|\v_{\varepsilon}^2(x)|^2|\w(x)|^2\d x\nonumber\\&\leq\beta\mu \frac{1}{[\z(t,\omega)]^{r-1}} \left(\int_{\R^d}|\v_{\varepsilon}^2(x)|^{r-1}|\w(x)|^2\d x\right)+\frac{r-3}{r-1}\left[\frac{2}{\beta\mu (r-1)}\right]^{\frac{2}{r-3}}\left(\int_{\R^d}|\w(x)|^2\d x\right).
		\end{align}
		Using the estimate \eqref{3d-ab13} in \eqref{3d-ab12}, we get
		\begin{align}\label{3d-ab14}
			\left|\frac{1}{\z(t,\omega)}(\B(\w,\w),\v_{\varepsilon}^2)\right|&\leq\frac{\mu}{2}\|\nabla\w\|_{\H}^2+\frac{\beta}{2}\frac{1}{[\z(t,\omega)]^{r-1}}\||\w||\v_{\varepsilon}^2|^{\frac{r-1}{2}}\|^2_{\H}+\eta_1\|\w\|^2_{\H},
		\end{align}
		where $\eta_1= \frac{r-3}{2\mu(r-1)}\left[\frac{2}{\beta\mu (r-1)}\right]^{\frac{2}{r-3}}$ . By \eqref{441}, we have 
		\begin{align}\label{Conti7}
			\left|\frac{1}{\z(t,\omega)}\left\langle\B\big(\v_{\varepsilon}^1\big)-\B\big(\v_{\varepsilon}^2\big), \w\right\rangle\right|=\left|\frac{1}{\z(t,\omega)}\left\langle\B\big(\w,\w \big), \v_{\varepsilon}^2\right\rangle\right|.
		\end{align}
		Using  \eqref{Conti3}, \eqref{3d-ab14} and \eqref{Conti7} in \eqref{Conti2}, we get
		\begin{align*}
			&	\frac{\d}{\d t} \|\w(t)\|^2_{\H}  \leq2\eta_1\|\w(t)\|^2_{\H},
		\end{align*} for a.e. $t\in[\tau,\tau+T]$, which implies $\|\w(t)\|^2_{\H}\leq e^{2T\eta_1} \|\w(\tau)\|^2_{\H},$ as required.
		\vskip 2mm
		\noindent
		\textbf{Case III:} \textit{$d=3$ and $r=3$ with $2\beta\mu\geq1$.} Making use of \eqref{b1} and \eqref{441}, we find
		\begin{align}\label{Conti5}
			\left|\frac{1}{\z(t,\omega)}\left\langle\B\big(\v_{\varepsilon}^1\big)-\B\big(\v_{\varepsilon}^2\big), \w\right\rangle\right|&=\left|e^{\varepsilon\omega(t)}\left\langle\B\big(\w,\w \big), \v_{\varepsilon}^2\right\rangle\right|\nonumber\\&\leq\frac{1}{2\beta}\|\nabla\w\|^2_{\H}+\frac{\beta}{2}e^{2\varepsilon\omega(t)}\||\v_{\varepsilon}^2||\w|\|^2_{\H}.
		\end{align}
		Using \eqref{Conti3} and \eqref{Conti5} in \eqref{Conti2}, we get
		\begin{align*}
			&	\frac{1}{2}\frac{\d}{\d t} \|\w(t)\|^2_{\H} +\left(\mu-\frac{1}{2\beta}\right) \|\nabla\w(t)\|^2_{\H} \leq0,
		\end{align*} for a.e. $t\in[\tau,\tau+T]$.
		For $2\beta\mu\geq1$,  we deduce $	\|\w(t)\|^2_{\H}\leq \|\w(\tau)\|^2_{\H},$ which completes the proof.
	\end{proof}
	\section{Random pullback attractor for non-autonomous stochastic CBF equations}\label{sec5}\setcounter{equation}{0}
	This section is devoted to establish the existence of unique random $\mathfrak{D}$-pullback attractor for the system \eqref{SCBF}. We start with deriving uniform estimates on the solution of the system \eqref{CCBF} and then establish the $\mathfrak{D}$-asymptotic compactness of the solution by the concept given in \cite{Ball} for deterministic systems. 
	\begin{lemma}\label{LemmaUe}
		For $d=2$ with $r\geq1$, $d=3$ with $r>3$ and $d=r=3$ with $2\beta\mu\geq1$, assume that $\f\in \mathrm{L}^2_{\mathrm{loc}}(\mathbb{R};\V')$ satisfies \eqref{forcing1}. Then for every $\tau\in\R, \omega\in \Omega$ and $$D=\{D(\tau,\omega):\tau\in\R, \omega\in\Omega\}\in\mathfrak{D},$$ there exists $\mathfrak{T}=\mathfrak{T}(\tau,\omega,D)>0$ such that for all $t\geq \mathfrak{T}$ and $s\geq \tau-t$, the solution $\v_{\varepsilon}(\cdot)$ of the system \eqref{CCBF} with $\omega$ replaced by $\vartheta_{-\tau}\omega$ satisfies 
		\begin{align}
			&	\|\v_{\varepsilon}(s,\tau-t,\vartheta_{-\tau}\omega,\v_{\varepsilon,\tau-t})\|^2_{\H} &\nonumber\\&\quad\leq e^{\alpha(\tau-s)}+ \frac{e^{-\alpha s}}{\min\{\mu,\alpha\}}\int_{-\infty}^{s} e^{\alpha\xi} [\z(\xi,\vartheta_{-\tau}\omega)]^2 \|\f(\cdot,\xi)\|^2_{\V'}\d \xi,\label{ue}
			\\
			&\int_{\tau-t}^{s}e^{\alpha\xi}\|\nabla\v_{\varepsilon}(\xi,\tau-t,\vartheta_{-\tau}\omega,\v_{\varepsilon,\tau-t})\|^2_{\H}\d\xi\nonumber\\&\quad\leq \frac{e^{\alpha\tau}}{\mu}+ \frac{1}{\mu\min\{\mu,\alpha\}}\int_{-\infty}^{s} e^{\alpha\xi} [\z(\xi,\vartheta_{-\tau}\omega)]^2 \|\f(\cdot,\xi)\|^2_{\V'}\d \xi,\label{ue^1}
		\end{align}
		and
		\begin{align}\label{ue^2}
			&\int_{\tau-t}^{s}\frac{e^{\alpha\xi}}{[\z(\xi,\omega)]^{r-1}}\|\v_{\varepsilon}(\xi,\tau-t,\omega,\v_{\varepsilon,\tau-t})\|^{r+1}_{\wi\L^{r+1}}\d\xi\nonumber\\&\leq \frac{e^{\alpha\tau}}{2\beta}+ \frac{1}{2\beta\min\{\mu,\alpha\}}\int_{-\infty}^{s} e^{\alpha\xi} [\z(\xi,\vartheta_{-\tau}\omega)]^2 \|\f(\cdot,\xi)\|^2_{\V'}\d \xi,
		\end{align}
		where $\v_{\varepsilon,\tau-t}\in D(\tau-t,\vartheta_{-t}\omega).$
	\end{lemma}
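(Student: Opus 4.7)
The plan is to mimic the deterministic energy estimate from the proof of Theorem \ref{D-abH}, but now applied to the transformed random equation \eqref{CCBF} with $\omega$ replaced by $\vartheta_{-\tau}\omega$. First, I would take the $\H$-inner product of \eqref{CCBF} with $\v_{\varepsilon}(s)$. By \eqref{b0}, the bilinear term drops out ($\langle\B(\v_\varepsilon),\v_\varepsilon\rangle=0$), and by the identity $\langle\mathcal{C}(\v_\varepsilon),\v_\varepsilon\rangle=\|\v_\varepsilon\|_{\wi\L^{r+1}}^{r+1}$ the nonlinear damping term yields a good coercive quantity. Estimating the forcing via Cauchy-Schwarz and Young's inequality as in \eqref{S*1}, splitting the $\|\v_\varepsilon\|_\V^2$ between the Brinkman and Darcy terms, gives the pointwise-in-$s$ differential inequality
\begin{align*}
\frac{\d}{\d s}\|\v_\varepsilon(s)\|_\H^2 + \alpha\|\v_\varepsilon(s)\|_\H^2 + \mu\|\nabla\v_\varepsilon(s)\|_\H^2 + \frac{2\beta}{[\z(s,\vartheta_{-\tau}\omega)]^{r-1}}\|\v_\varepsilon(s)\|_{\wi\L^{r+1}}^{r+1} \leq \frac{[\z(s,\vartheta_{-\tau}\omega)]^2}{\min\{\mu,\alpha\}}\|\f(s)\|_{\V'}^2.
\end{align*}

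Next, I would multiply by the integrating factor $e^{\alpha s}$ and integrate from $\tau-t$ to $s$, producing the master inequality
\begin{align*}
e^{\alpha s}\|\v_\varepsilon(s)\|_\H^2 &+ \mu\int_{\tau-t}^{s} e^{\alpha\xi}\|\nabla\v_\varepsilon(\xi)\|_\H^2\d\xi + 2\beta\int_{\tau-t}^{s}\frac{e^{\alpha\xi}}{[\z(\xi,\vartheta_{-\tau}\omega)]^{r-1}}\|\v_\varepsilon(\xi)\|_{\wi\L^{r+1}}^{r+1}\d\xi \\
&\leq e^{\alpha(\tau-t)}\|\v_{\varepsilon,\tau-t}\|_\H^2 + \frac{1}{\min\{\mu,\alpha\}}\int_{\tau-t}^{s} e^{\alpha\xi}[\z(\xi,\vartheta_{-\tau}\omega)]^2\|\f(\xi)\|_{\V'}^2\d\xi.
\end{align*}
From this single identity all three inequalities \eqref{ue}, \eqref{ue^1}, \eqref{ue^2} will be extracted: dropping the nonnegative $\V$ and $\wi\L^{r+1}$ terms and multiplying by $e^{-\alpha s}$ yields \eqref{ue}; dropping the $\H$ and $\wi\L^{r+1}$ terms and the last two respectively give \eqref{ue^1} and \eqref{ue^2}, after enlarging the RHS integral to $\int_{-\infty}^{s}$.

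The one step that requires the random/tempered structure is controlling the initial-data term $e^{\alpha(\tau-t)}\|\v_{\varepsilon,\tau-t}\|_\H^2 = e^{\alpha\tau}\,e^{-\alpha t}\|\v_{\varepsilon,\tau-t}\|_\H^2$. Since $\v_{\varepsilon,\tau-t}\in D(\tau-t,\vartheta_{-t}\omega)$ and $D\in\mathfrak{D}$ satisfies the temperedness condition \eqref{D_1} (with constant $c=\alpha>0$), there exists $\mathfrak{T}=\mathfrak{T}(\tau,\omega,D)>0$ such that $e^{-\alpha t}\|\v_{\varepsilon,\tau-t}\|_\H^2 \leq 1$ for every $t\geq\mathfrak{T}$. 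This gives $e^{\alpha(\tau-t)}\|\v_{\varepsilon,\tau-t}\|_\H^2 \leq e^{\alpha\tau}$, and combined with $e^{-\alpha s}\cdot e^{\alpha\tau}\leq e^{\alpha(\tau-s)}$ it produces the initial-data contribution $e^{\alpha(\tau-s)}$ in \eqref{ue} and the constant $e^{\alpha\tau}/\mu$ in \eqref{ue^1} (respectively $e^{\alpha\tau}/(2\beta)$ in \eqref{ue^2}).

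The main obstacle is not technical computation but merely bookkeeping: one must check that the integrals $\int_{-\infty}^{s}e^{\alpha\xi}[\z(\xi,\vartheta_{-\tau}\omega)]^2\|\f(\xi)\|_{\V'}^2\d\xi$ on the right are finite. Since $\z(\xi,\vartheta_{-\tau}\omega)=e^{-\varepsilon(\omega(\xi-\tau)-\omega(-\tau))}$ grows only sub-exponentially in $|\xi|$ by \eqref{omega}, and we have assumed $\delta<\alpha$ in \eqref{forcing1}, the growth of $[\z]^2$ is dominated by the gap $\alpha-\delta$, so the integral converges; this is the standard interplay that makes the tempered class $\mathfrak{D}$ the correct setting. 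With these ingredients in place, the three displayed estimates follow directly from the master inequality above.
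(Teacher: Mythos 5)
Your proposal is correct and follows essentially the same route as the paper: the energy identity \eqref{ue0}--\eqref{ue1}, the variation-of-constants (integrating factor $e^{\alpha s}$) step giving the combined inequality \eqref{ue2}, the temperedness of $D$ to absorb the initial-data term as in \eqref{initial_data}--\eqref{ue3}, and the sublinear growth \eqref{omega} together with the gap $\alpha-\delta$ in \eqref{forcing1} to verify finiteness of the forcing integral as in \eqref{ue4}. No gaps.
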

	\begin{proof}
		From the first equation of the system \eqref{CCBF} and \eqref{b0}, we obtain
		\begin{align}\label{ue0}
			&	\frac{1}{2}\frac{\d}{\d t} \|\v_{\varepsilon}\|^2_{\H} +\mu\|\nabla\v_{\varepsilon}\|^2_{\H} + \alpha\|\v_{\varepsilon}\|^2_{\H} + \frac{\beta}{[\z(t,\omega)]^{r-1}} \|\v_{\varepsilon}(t)\|^{r+1}_{\wi \L^{r+1}}\nonumber\\ &\quad= \z(t,\omega)\left\langle\f,\v_{\varepsilon}\right\rangle\leq \frac{\min\{\mu,\alpha\}}{2}\|\v_{\varepsilon}\|^2_{\V}+\frac{[\z(t,\omega)]^2}{2\min\{\mu,\alpha\}}\|\f\|^2_{\V'},
		\end{align}
		so that 
		\begin{align}\label{ue1}
			\frac{\d}{\d t} \|\v_{\varepsilon}\|^2_{\H}+ \alpha\|\v_{\varepsilon}\|^2_{\H} +\mu\|\nabla\v_{\varepsilon}\|^2_{\H}+ \frac{2\beta}{[\z(t,\omega)]^{r-1}} \|\v_{\varepsilon}\|^{r+1}_{\wi \L^{r+1}} \leq \frac{[\z(t,\omega)]^2}{\min\{\mu,\alpha\}}\|\f\|^2_{\V'}.
		\end{align}
		Applying variation of constant formula to \eqref{ue1}, we obtain
		\begin{align*}
			&	\|\v_{\varepsilon}(s,\tau-t,\omega,\v_{\varepsilon,\tau-t})\|^2_{\H} + \mu\int_{\tau-t}^{s}e^{\alpha(\xi-s)}\|\nabla\v_{\varepsilon}(\xi,\tau-t,\omega,\v_{\varepsilon,\tau-t})\|^2_{\H}\d\xi\nonumber\\& \quad+ 2\beta\int_{\tau-t}^{s}\frac{e^{\alpha(\xi-s)}}{[\z(\xi,\omega)]^{r-1}}\|\v_{\varepsilon}(\xi,\tau-t,\omega,\v_{\varepsilon,\tau-t})\|^{r+1}_{\wi\L^{r+1}}\d\xi\nonumber\\&\leq e^{\alpha(\tau-t-s)}\|\v_{\varepsilon,\tau-t}\|^2_{\H}+ \frac{1}{\min\{\mu,\alpha\}}\int_{\tau-t}^{s} e^{\alpha(\xi-s)} [\z(\xi,\omega)]^2 \|\f(\cdot,\xi)\|^2_{\V'}\d \xi.
		\end{align*}
		Replacing $\omega$ by $\vartheta_{-\tau}\omega$ in the above inequality, we find 
		\begin{align}\label{ue2}
			&	\|\v_{\varepsilon}(s,\tau-t,\vartheta_{-\tau}\omega,\v_{\varepsilon,\tau-t})\|^2_{\H} + \mu\int_{\tau-t}^{s}e^{\alpha(\xi-s)}\|\nabla\v_{\varepsilon}(\xi,\tau-t,\vartheta_{-\tau}\omega,\v_{\varepsilon,\tau-t})\|^2_{\H}\d\xi\nonumber\\&\quad + 2\beta\int_{\tau-t}^{s}\frac{e^{\alpha(\xi-s)}}{[\z(\xi,\vartheta_{-\tau}\omega)]^{r-1}}\|\v_{\varepsilon}(\xi,\tau-t,\vartheta_{-\tau}\omega,\v_{\varepsilon,\tau-t})\|^{r+1}_{\wi\L^{r+1}}\d\xi\nonumber\\&\leq e^{\alpha(\tau-s)}e^{-\alpha t}\|\v_{\varepsilon,\tau-t}\|^2_{\H}+ \frac{e^{-\alpha s}}{\min\{\mu,\alpha\}}\int_{\tau-t}^{s} e^{\alpha\xi} [\z(\xi,\vartheta_{-\tau}\omega)]^2 \|\f(\cdot,\xi)\|^2_{\V'}\d \xi.
		\end{align}
		Since $\v_{\varepsilon,\tau-t}\in D(\tau-t,\vartheta_{-t}\omega)$, we have
		\begin{align}\label{initial_data}
			e^{-\alpha t}\|\v_{\varepsilon,\tau-t}\|^2_{\H}\leq e^{-\alpha t}\|D(\tau-t,\vartheta_{-t}\omega)\|^2_{\H}\to 0, \ \text{ as } \ t \to \infty.
		\end{align} 
		Therefore, there exists $\mathfrak{T}=\mathfrak{T}(\tau,\omega,D)>0$ such that $e^{-\alpha t}\|\v_{\varepsilon,\tau-t}\|^2_{\H}\leq 1$ for all $t\geq \mathfrak{T}.$ Thus 
		\begin{align}\label{ue3}
			e^{\alpha(\tau-s)}e^{-\alpha t}\|\v_{\varepsilon,\tau-t}\|^2_{\H}\leq e^{\alpha(\tau-s)}, \text{ for all } t\geq \mathfrak{T}.
		\end{align}
		Now, it is only left to estimate the final term of \eqref{ue2}. Let $\tilde{\omega}=\vartheta_{-\tau}\omega$. Then by \eqref{omega}, we have that there exists $R<0$ such that for all $\xi\leq R$,
		\begin{align*}
			-2\varepsilon\tilde{\omega}(\xi)\leq-(\alpha-\delta)\xi,
		\end{align*}
		where $\delta$ is the positive constant in \eqref{forcing1}. Therefore, 
		\begin{align*}
			[\z(t,\tilde{\omega})]^2=e^{-2\varepsilon\tilde{\omega}(\xi)}\leq e^{-(\alpha-\delta)\xi}
		\end{align*}
		and we have for all $\xi\leq R$,
		\begin{align*}
			e^{\alpha\xi} [\z(\xi,\vartheta_{-\tau}\omega)]^2 \|\f(\cdot,\xi)\|^2_{\V'}=e^{(\alpha-\delta)\xi} [\z(\xi,\vartheta_{-\tau}\omega)]^2 e^{\delta\xi} \|\f(\cdot,\xi)\|^2_{\V'}\leq e^{\delta\xi} \|\f(\cdot,\xi)\|^2_{\V'}.
		\end{align*}
		Therefore, \eqref{forcing1} gives us that for every $s\in\R, \tau\in\R$ and $\omega\in\Omega$,
		\begin{align}\label{ue4}
			\int_{-\infty}^{s}e^{\alpha\xi} [\z(\xi,\vartheta_{-\tau}\omega)]^2 \|\f(\cdot,\xi)\|^2_{\V'}\d \xi\leq \int_{-\infty}^{s}e^{\delta\xi} \|\f(\cdot,\xi)\|^2_{\V'}\d \xi <\infty.
		\end{align} 
		Hence, from \eqref{ue2}, \eqref{ue3} and \eqref{ue4}, the required results  \eqref{ue}-\eqref{ue^2} follows.
	\end{proof}
	The following Lemma is a direct consequence of Lemma \ref{LemmaUe}.
	\begin{lemma}\label{LemmaUe1}
		For $d=2$ with $r\geq1$, $d=3$ with $r>3$ and $d=r=3$ with $2\beta\mu\geq1$, assume that $\f\in \mathrm{L}^2_{\mathrm{loc}}(\mathbb{R};\V')$ satisfies \eqref{forcing1}. Then for every $\tau\in\R, \omega\in \Omega$ and $$D=\{D(\tau,\omega):\tau\in\R, \omega\in\Omega\}\in\mathfrak{D},$$ there exists $\mathfrak{T}=\mathfrak{T}(\tau,\omega,D)>0$ such that for every $k\geq 0$ and for all $t\geq \mathfrak{T}+k$, the solution $\v_{\varepsilon}(\cdot)$ of the system \eqref{CCBF} with $\omega$ replaced by $\vartheta_{-\tau}\omega$ satisfies 
		\begin{align*}
			&\|\v_{\varepsilon}(\tau-k,\tau-t,\vartheta_{-\tau}\omega,\v_{\varepsilon,\tau-t})\|^2_{\H} \nonumber\\&\quad\leq e^{\alpha k}+ \frac{e^{-\alpha(\tau-k) }}{\min\{\mu,\alpha\}}\int_{-\infty}^{\tau-k} e^{\alpha\xi} [\z(\xi,\vartheta_{-\tau}\omega)]^2 \|\f(\cdot,\xi)\|^2_{\V'}\d \xi,
		\end{align*}
		where $\v_{\varepsilon,\tau-t}\in D(\tau-t,\vartheta_{-t}\omega).$
	\end{lemma}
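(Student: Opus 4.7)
The plan is to recognize this as a direct specialization of Lemma \ref{LemmaUe} evaluated at the particular time $s=\tau-k$, and to verify that the hypothesis $s\geq \tau-t$ of that lemma is automatically absorbed by the assumption $t\geq \mathfrak{T}+k$.

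First, I would take exactly the same threshold $\mathfrak{T}=\mathfrak{T}(\tau,\omega,D)>0$ furnished by Lemma \ref{LemmaUe}, namely the one chosen so that $e^{-\alpha t}\|\v_{\varepsilon,\tau-t}\|_{\H}^{2}\leq 1$ for every $t\geq \mathfrak{T}$ and every $\v_{\varepsilon,\tau-t}\in D(\tau-t,\vartheta_{-t}\omega)$ (this is the step that used the temperedness condition \eqref{D_1} and the fact that the stochastic factor $[\z(\xi,\vartheta_{-\tau}\omega)]^{2}$ is controlled, via \eqref{omega}, in such a way that the integral in \eqref{ue4} is finite). Crucially, this $\mathfrak{T}$ depends only on $\tau$, $\omega$ and $D$, not on any additional parameter $k$, which is exactly what the statement of Lemma \ref{LemmaUe1} requires.

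Next, for an arbitrary $k\geq 0$ and any $t\geq \mathfrak{T}+k$, I would set $s:=\tau-k$. Then $s\geq \tau-t$ since $t\geq k$, and $t\geq \mathfrak{T}$, so both hypotheses of Lemma \ref{LemmaUe} are met. Substituting $s=\tau-k$ into \eqref{ue} yields
\begin{align*}
\|\v_{\varepsilon}(\tau-k,\tau-t,\vartheta_{-\tau}\omega,\v_{\varepsilon,\tau-t})\|_{\H}^{2}
&\leq e^{\alpha(\tau-(\tau-k))}+\frac{e^{-\alpha(\tau-k)}}{\min\{\mu,\alpha\}}\int_{-\infty}^{\tau-k}e^{\alpha\xi}[\z(\xi,\vartheta_{-\tau}\omega)]^{2}\|\f(\cdot,\xi)\|_{\V'}^{2}\d\xi \\
&= e^{\alpha k}+\frac{e^{-\alpha(\tau-k)}}{\min\{\mu,\alpha\}}\int_{-\infty}^{\tau-k}e^{\alpha\xi}[\z(\xi,\vartheta_{-\tau}\omega)]^{2}\|\f(\cdot,\xi)\|_{\V'}^{2}\d\xi,
\end{align*}
which is precisely the desired inequality.

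There is no real obstacle here: the content of the argument is entirely contained in Lemma \ref{LemmaUe}. The only point worth double-checking is that the $\mathfrak{T}$ produced by Lemma \ref{LemmaUe} is independent of the auxiliary time $k$, so that enlarging the waiting time from $\mathfrak{T}$ to $\mathfrak{T}+k$ is sufficient to guarantee both $t\geq \mathfrak{T}$ (needed to bound the initial data term) and $t\geq k$ (needed so that $\tau-k\geq \tau-t$, the admissible range of the intermediate time $s$ in Lemma \ref{LemmaUe}). Both conditions follow immediately from $t\geq \mathfrak{T}+k$, and the proof is complete.
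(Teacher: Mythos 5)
Your proposal is correct and follows exactly the paper's own argument: the paper likewise sets $s=\tau-k$, notes that $t\geq\mathfrak{T}+k$ gives both $t\geq\mathfrak{T}$ and $s\geq\tau-t$, and then invokes Lemma \ref{LemmaUe}. Your additional remark that $\mathfrak{T}$ is independent of $k$ is a worthwhile sanity check but does not change the substance.
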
 
	\begin{proof}
		Given $\tau\in \R$ and $k\geq 0$, let $s=\tau-k$. Let $\mathfrak{T}>0$ be the constant claimed in Lemma \ref{LemmaUe}. Also, $t\geq \mathfrak{T}+k$ implies $t\geq \mathfrak{T}$ and $s\geq \tau-t$. Hence, the required result follows from Lemma \ref{LemmaUe}. 
	\end{proof}
	Next, we give an important result on the weak convergence of solution of the system \eqref{CCBF}, which will help us to show the asymptotic compactness of the solution of system \eqref{CCBF}.
	\begin{lemma}\label{weak}
		For $d=2$ with $r\geq1$, $d=3$ with $r>3$ and $d=r=3$ with $2\beta\mu\geq1$, assume that $\f\in\mathrm{L}^2_{\mathrm{loc}}(\R;\V')$. Let $\tau\in\R, \omega\in \Omega$ and $\v_{\varepsilon,\tau}, \v_{\varepsilon,\tau,n}\in \H$ for all $n\in\N.$ If $$\v_{\varepsilon,\tau,n}\xrightharpoonup{w}\v_{\varepsilon,\tau}\ \text{ in }\ \H,$$  then the solution $\v_{\varepsilon}(\cdot)$ of the system \eqref{CCBF} has the following properties:
		\begin{itemize}
			\item [(i)] $\v_{\varepsilon}(\xi,\tau,\omega,\v_{\varepsilon,\tau,n})\xrightharpoonup{w}\v_{\varepsilon}(\xi,\tau,\omega,\v_{\varepsilon,\tau})$ in $\H$  for all  $\xi\geq \tau$.
			\item [(ii)] $\v_{\varepsilon}(\cdot,\tau,\omega,\v_{\varepsilon,\tau,n})\xrightharpoonup{w}\v_{\varepsilon}(\cdot,\tau,\omega,\v_{\varepsilon,\tau})$ in $\mathrm{L}^2(\tau,\tau+T;\V)$ and $\mathrm{L}^{r+1}(\tau,\tau+T;\wi\L^{r+1})$  for every $T>0$.
		\end{itemize}
	\end{lemma}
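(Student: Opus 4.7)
The approach is a standard weak compactness plus uniqueness argument, the only non-routine point being the passage to the limit in the nonlinearities on the unbounded domain $\R^d$. Set $\v_n(\cdot):=\v_{\varepsilon}(\cdot,\tau,\omega,\v_{\varepsilon,\tau,n})$ and $\v(\cdot):=\v_{\varepsilon}(\cdot,\tau,\omega,\v_{\varepsilon,\tau})$. Since $\v_{\varepsilon,\tau,n}\rightharpoonup\v_{\varepsilon,\tau}$ in $\H$, the sequence of initial data is bounded in $\H$. First I would apply the energy inequality \eqref{ue1} on the finite window $[\tau,\tau+T]$, with $\omega$ kept fixed (rather than shifted), to obtain uniform-in-$n$ bounds
\[
\sup_{n}\Bigl(\|\v_n\|_{\mathrm{L}^\infty(\tau,\tau+T;\H)}^{2} + \|\v_n\|_{\mathrm{L}^2(\tau,\tau+T;\V)}^{2} + \|\v_n\|_{\mathrm{L}^{r+1}(\tau,\tau+T;\wi\L^{r+1})}^{r+1}\Bigr)<\infty,
\]
and, reading off the equation \eqref{CCBF} term by term, a corresponding uniform bound for $\frac{\d\v_n}{\d t}$ in a suitable dual space of the form $\mathrm{L}^{p}(\tau,\tau+T;\V'+\wi\L^{\frac{r+1}{r}})$ with $p>1$. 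By Banach--Alaoglu I would then extract a subsequence (not relabeled) and a limit $\widetilde\v$ for which the three weak or weak-$\ast$ convergences above hold, together with the weak convergence of the time derivatives.

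The crucial step is passing to the limit in the nonlinear terms $\B(\v_n)$ and $\mathcal{C}(\v_n)$, which is exactly what global compact embeddings would provide on a bounded domain but which fails on $\R^d$. I expect this to be the main obstacle, and I would resolve it by localisation: on each ball $B_k:=\{x\in\R^d:|x|<k\}$ the embedding $\mathrm{H}^1(B_k)\hookrightarrow \mathrm{L}^2(B_k)$ is compact, so the Aubin--Lions lemma applied to the restriction of $\v_n$ to $B_k$ yields strong convergence $\v_n\to\widetilde\v$ in $\mathrm{L}^2(\tau,\tau+T;\mathrm{L}^2(B_k))$. Interpolated with the uniform $\mathrm{L}^{r+1}(\tau,\tau+T;\wi\L^{r+1})$ bound, this upgrades to strong convergence in $\mathrm{L}^s(\tau,\tau+T;\mathrm{L}^s(B_k))$ for every $s\in[2,r+1)$, and along a further subsequence to convergence almost everywhere on $(\tau,\tau+T)\times B_k$. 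Testing the weak formulation of \eqref{CCBF} against an arbitrary $\boldsymbol\varphi\in\mathcal{V}$ (compactly supported by definition) and a smooth scalar cutoff in time, this localised strong convergence is enough to send both $\langle\B(\v_n),\boldsymbol\varphi\rangle$ and $\frac{\beta}{[\z(t,\omega)]^{r-1}}\langle\mathcal{C}(\v_n),\boldsymbol\varphi\rangle$ to their analogues for $\widetilde\v$; the linear terms pass by the weak convergence already established.

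Identifying the initial datum $\widetilde\v(\tau)=\v_{\varepsilon,\tau}$ follows from the integral form of the limiting equation combined with the hypothesis $\v_{\varepsilon,\tau,n}\rightharpoonup\v_{\varepsilon,\tau}$ in $\H$. The uniqueness assertion established in Lemma~\ref{Continuity} then forces $\widetilde\v=\v$, and because the limit is thereby independent of the extracted subsequence, the \emph{entire} sequence $\v_n$ converges in the senses asserted in (ii), settling that part. For (i), fix $\xi\geq\tau$ and $\boldsymbol\psi\in\H$; take the inner product of the integral form of \eqref{CCBF} on $[\tau,\xi]$ with $\boldsymbol\psi$. Part (ii) already passes to the limit in every term on the right, while the initial-data convergence takes care of $(\v_{\varepsilon,\tau,n},\boldsymbol\psi)\to(\v_{\varepsilon,\tau},\boldsymbol\psi)$. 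This delivers $(\v_n(\xi),\boldsymbol\psi)\to(\v(\xi),\boldsymbol\psi)$ for every $\boldsymbol\psi\in\H$, which is the required weak convergence in $\H$ at each time $\xi\geq\tau$.
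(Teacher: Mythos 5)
Your argument is correct and is essentially the argument the paper has in mind: the paper gives no proof of Lemma \ref{weak} at all, deferring to Lemmas 5.2--5.3 of \cite{KM}, which carry out exactly this standard scheme (uniform energy bounds from \eqref{ue1} on a finite window, weak/weak-$\ast$ compactness plus a time-derivative bound, Aubin--Lions on balls to handle the loss of compact embeddings on $\R^d$, identification of the limit against compactly supported test functions, and uniqueness to upgrade from a subsequence to the whole sequence). The only cosmetic point is in part (i): the integral form of \eqref{CCBF} cannot be paired directly with an arbitrary $\boldsymbol\psi\in\H$ (the term $\langle\A\v_n,\boldsymbol\psi\rangle$ requires $\boldsymbol\psi\in\V$), so you should first test with $\boldsymbol\psi\in\mathcal{V}$ and then extend to all of $\H$ by density together with the uniform bound $\sup_n\|\v_n(\xi)\|_{\H}<\infty$.
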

	\begin{proof}
		This can be proved by the standard method as in \cite{KM} (see Lemmas 5.2 and 5.3 in \cite{KM}).
	\end{proof}
	Next, we prove the pullback asymptotic compactness of the solution of the system \eqref{CCBF}.
	\begin{lemma}\label{Asymptotic_v}
		For $d=2$ with $r\geq1$, $d=3$ with $r>3$ and $d=r=3$ with $2\beta\mu\geq1$, assume that $\f\in\mathrm{L}^2_{\mathrm{loc}}(\R;\V')$ and \eqref{forcing1} holds. Then for every $\tau\in \R, \omega\in \Omega,$ $$D=\{D(\tau,\omega):\tau\in \R,\omega\in \Omega\}\in \mathfrak{D}$$ and $t_n\to \infty, \v_{\varepsilon,0,n}\in D(\tau-t_n, \vartheta_{-t_{n}}\omega)$, the sequence $\v_{\varepsilon}(\tau,\tau-t_n,\vartheta_{-\tau}\omega,\v_{\varepsilon,0,n})$ of solutions of the system \eqref{CCBF} has a convergent subsequence in $\H$.
	\end{lemma}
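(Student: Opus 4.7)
The plan is to mirror the argument of Theorem \ref{D-asymp}, adapting the energy equation method of Ball to the random cocycle $\Phi_\varepsilon$ with the extra weight $\z(t,\vartheta_{-\tau}\omega)$ appearing on the nonlinear and forcing terms. Since $\omega\in\Omega$ is fixed throughout, $\z$ is just a deterministic continuous weight, so the structural difficulties are the same as in the deterministic setting and are governed by the tempered growth estimate used in \eqref{ue4}.

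First, I would fix $\tau\in\R$, $\omega\in\Omega$, $D\in\mathfrak{D}$, a sequence $t_n\to\infty$, and $\v_{\varepsilon,0,n}\in D(\tau-t_n,\vartheta_{-t_n}\omega)$. By Lemma \ref{LemmaUe} (applied with $s=\tau$), the sequence $\v_\varepsilon(\tau,\tau-t_n,\vartheta_{-\tau}\omega,\v_{\varepsilon,0,n})$ is bounded in $\H$ for $n$ large, so up to a subsequence it converges weakly to some $\tilde\v\in\H$, giving
\begin{align*}
\|\tilde\v\|_{\H}\leq\liminf_{n\to\infty}\|\v_\varepsilon(\tau,\tau-t_n,\vartheta_{-\tau}\omega,\v_{\varepsilon,0,n})\|_{\H}.
\end{align*}
The goal is to prove the matching $\limsup$ inequality. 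For each $k\in\N$, Lemma \ref{LemmaUe1} gives uniform (in $n$) boundedness of $\v_\varepsilon(\tau-k,\tau-t_n,\vartheta_{-\tau}\omega,\v_{\varepsilon,0,n})$ in $\H$, and a Cantor diagonal extraction yields a subsequence and elements $\tilde\v_k\in\H$ with $\v_\varepsilon(\tau-k,\tau-t_n,\vartheta_{-\tau}\omega,\v_{\varepsilon,0,n})\xrightharpoonup{w}\tilde\v_k$ in $\H$ for every $k$. Applying Lemma \ref{weak} (with initial time $\tau-k$) and using uniqueness of weak limits together with the cocycle identity $\v_\varepsilon(\tau,\tau-t_n,\vartheta_{-\tau}\omega,\v_{\varepsilon,0,n})=\v_\varepsilon(\tau,\tau-k,\vartheta_{-\tau}\omega,\v_\varepsilon(\tau-k,\tau-t_n,\vartheta_{-\tau}\omega,\v_{\varepsilon,0,n}))$ identifies $\v_\varepsilon(\tau,\tau-k,\vartheta_{-\tau}\omega,\tilde\v_k)=\tilde\v$, and Lemma \ref{weak}(ii) gives the needed weak convergence of the intermediate trajectories on $(\tau-k,\tau)$ in $\mathrm{L}^2(\V)$ and $\mathrm{L}^{r+1}(\wi\L^{r+1})$.

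Next I would write the energy equation. From \eqref{ue0} with equality in place of the inequality and applying the variation of constants formula on $[\tau-k,\tau]$, I obtain both
\begin{align*}
\|\tilde\v\|_{\H}^2&=e^{-2\alpha k}\|\tilde\v_k\|_{\H}^2-2\mu\int_{\tau-k}^{\tau}e^{2\alpha(\xi-\tau)}\|\nabla\v_\varepsilon(\xi;\tau-k,\vartheta_{-\tau}\omega,\tilde\v_k)\|_{\H}^2\d\xi\\
&\quad-2\beta\int_{\tau-k}^{\tau}\frac{e^{2\alpha(\xi-\tau)}}{[\z(\xi,\vartheta_{-\tau}\omega)]^{r-1}}\|\v_\varepsilon(\xi;\tau-k,\vartheta_{-\tau}\omega,\tilde\v_k)\|_{\wi\L^{r+1}}^{r+1}\d\xi\\
&\quad+2\int_{\tau-k}^{\tau}e^{2\alpha(\xi-\tau)}\z(\xi,\vartheta_{-\tau}\omega)\langle\f(\cdot,\xi),\v_\varepsilon(\xi;\tau-k,\vartheta_{-\tau}\omega,\tilde\v_k)\rangle\d\xi
\end{align*}
and the analogous identity for $\|\v_\varepsilon(\tau;\tau-t_n,\vartheta_{-\tau}\omega,\v_{\varepsilon,0,n})\|_{\H}^2$. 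Passing $n\to\infty$ in the latter: the initial-data term is handled via Lemma \ref{LemmaUe1} combined with \eqref{initial_data}, yielding
\begin{align*}
\limsup_{n\to\infty}e^{-2\alpha k}\|\v_\varepsilon(\tau-k,\tau-t_n,\vartheta_{-\tau}\omega,\v_{\varepsilon,0,n})\|_{\H}^2\leq\frac{e^{-\alpha\tau}}{\min\{\mu,\alpha\}}\int_{-\infty}^{\tau-k}e^{\alpha\xi}[\z(\xi,\vartheta_{-\tau}\omega)]^2\|\f(\cdot,\xi)\|_{\V'}^2\d\xi.
\end{align*}
The forcing integral passes to the limit by the weak convergence in $\mathrm{L}^2(\tau-k,\tau;\V)$ from Lemma \ref{weak}(ii), noting that $e^{2\alpha(\xi-\tau)}\z(\xi,\vartheta_{-\tau}\omega)\f(\cdot,\xi)$ is a fixed element of $\mathrm{L}^2(\tau-k,\tau;\V')$. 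For the dissipation and absorption terms, the weighted norms
\begin{align*}
\Bigl(\int_{\tau-k}^{\tau}e^{2\alpha(\xi-\tau)}\|\cdot\|_{\H}^2\d\xi\Bigr)^{1/2}\quad\text{and}\quad\Bigl(\int_{\tau-k}^{\tau}\frac{e^{2\alpha(\xi-\tau)}}{[\z(\xi,\vartheta_{-\tau}\omega)]^{r-1}}\|\cdot\|_{\wi\L^{r+1}}^{r+1}\d\xi\Bigr)^{1/(r+1)}
\end{align*}
are equivalent to the standard ones on $\mathrm{L}^2(\tau-k,\tau;\H)$ and $\mathrm{L}^{r+1}(\tau-k,\tau;\wi\L^{r+1})$ since the weight $e^{2\alpha(\xi-\tau)}[\z(\xi,\vartheta_{-\tau}\omega)]^{-(r-1)}$ is bounded above and below on the compact interval $[\tau-k,\tau]$. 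Hence weak lower semicontinuity of these norms applied to $\nabla\v_\varepsilon(\cdot)$ and $\v_\varepsilon(\cdot)$ yields the desired $\limsup$ inequalities for the negative dissipation terms.

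Combining all these bounds with the identity for $\|\tilde\v\|_{\H}^2$ gives
\begin{align*}
\limsup_{n\to\infty}\|\v_\varepsilon(\tau,\tau-t_n,\vartheta_{-\tau}\omega,\v_{\varepsilon,0,n})\|_{\H}^2\leq\|\tilde\v\|_{\H}^2+\frac{e^{-\alpha\tau}}{\min\{\mu,\alpha\}}\int_{-\infty}^{\tau-k}e^{\alpha\xi}[\z(\xi,\vartheta_{-\tau}\omega)]^2\|\f(\cdot,\xi)\|_{\V'}^2\d\xi.
\end{align*}
Letting $k\to\infty$ and invoking the tempered integrability \eqref{ue4} derived from the hypothesis \eqref{forcing1} together with the sub-exponential growth of $\omega$ kills the remainder, giving $\limsup_n\|\v_\varepsilon(\tau,\tau-t_n,\vartheta_{-\tau}\omega,\v_{\varepsilon,0,n})\|_{\H}\leq\|\tilde\v\|_{\H}$. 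Combined with the $\liminf$ bound, the weak convergence is upgraded to strong convergence in $\H$, finishing the proof. The main delicate point is the control of the initial data term in \eqref{ue4} for $k\to\infty$; this is precisely where we need the tempered bound on $\z$ and the condition \eqref{forcing1}, which is where the Darcy coefficient $\alpha>0$ becomes indispensable.
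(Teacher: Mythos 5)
Your proposal is correct and follows essentially the same route as the paper's proof: the cocycle decomposition at time $\tau-k$, diagonal extraction of the weak limits $\tilde\v_k$, identification via Lemma \ref{weak}, the Ball energy identity with the weight $e^{2\alpha(\xi-\tau)}$, weak lower semicontinuity of the equivalent weighted norms, and the $k\to\infty$ limit killed by the tempered integrability \eqref{ue4}. The only cosmetic difference is that you justify boundedness of the weight $e^{2\alpha(\xi-\tau)}[\z(\xi,\vartheta_{-\tau}\omega)]^{-(r-1)}$ simply by continuity on the compact interval $[\tau-k,\tau]$, where the paper invokes the sublinear growth \eqref{omega}; both are valid.
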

	\begin{proof}
		From Lemma \ref{LemmaUe1} with $k=0$, it follows  that, there exists $\mathfrak{T}=\mathfrak{T}(\tau,\omega,D)>0$ such that for all $t\geq \mathfrak{T}$,
		\begin{align}\label{ac1}
			\|\v_{\varepsilon}(\tau,\tau-t,\vartheta_{-\tau}\omega,\v_{\varepsilon,\tau-t})\|^2_{\H} \leq 1+ \frac{e^{-\alpha\tau }}{\min\{\mu,\alpha\}}\int_{-\infty}^{\tau} e^{\alpha\xi} [\z(\xi,\vartheta_{-\tau}\omega)]^2 \|\f(\cdot,\xi)\|^2_{\V'}\d \xi,
		\end{align}
		where $\v_{\varepsilon,\tau-t}\in D(\tau-t,\vartheta_{-t}\omega).$ Since $t_n\to \infty$, there exists $N_0\in\N$ such that $t_n\geq \mathfrak{T}$ for all $n\geq N_0$. Since $\v_{\varepsilon,0,n}\in D(\tau-t_n, \vartheta_{-t_{n}}\omega)$, \eqref{ac1} gives that 
		\begin{align}\label{ac2}
			\|\v_{\varepsilon}(\tau,\tau-t_n,\vartheta_{-\tau}\omega,\v_{\varepsilon,0,n})\|^2_{\H} \leq 1+ \frac{e^{-\alpha\tau }}{\min\{\mu,\alpha\}}\int_{-\infty}^{\tau} e^{\alpha\xi} [\z(\xi,\vartheta_{-\tau}\omega)]^2 \|\f(\cdot,\xi)\|^2_{\V'}\d \xi,
		\end{align}
		for all $n\geq N_0.$ From \eqref{ac2}, it is clear that $\v_{\varepsilon}(\tau,\tau-t_n,\vartheta_{-\tau}\omega,\v_{\varepsilon,0,n})$ is bounded in $\H$ for all $n\geq N_0$, and thus there exists $\tilde{\v}_{\varepsilon}\in \H$ and a subsequence (denote by the same notation) such that 
		\begin{align}\label{ac3}
			\v_{\varepsilon}(\tau,\tau-t_n,\vartheta_{-\tau}\omega,\v_{\varepsilon,0,n})\xrightharpoonup{w} \tilde{\v}_{\varepsilon} \text{ in } \H.
		\end{align}
		The above weak convergence also implies that
		\begin{align}\label{ac4}
			\|\tilde{\v}_{\varepsilon}\|_{\H}\leq\liminf_{n\to\infty}\|\v_{\varepsilon}(\tau,\tau-t_n,\vartheta_{-\tau}\omega,\v_{\varepsilon,0,n})\|_{\H}.
		\end{align}
		In order to show that the convergence in \eqref{ac3} is actually strong convergence, we only need to show 
		\begin{align}\label{ac5}
			\|\tilde{\v}_{\varepsilon}\|_{\H}\geq\limsup_{n\to\infty}\|\v_{\varepsilon}(\tau,\tau-t_n,\vartheta_{-\tau}\omega,\v_{\varepsilon,0,n})\|_{\H}.
		\end{align}
		To prove \eqref{ac5}, we use the method of energy equations presented in \cite{Ball}. 	For a given $k\in \N$,	it is easy to see that
		\begin{align}\label{ac6}
			\v_{\varepsilon}(\tau,\tau-t_n,\vartheta_{-\tau}\omega,\v_{\varepsilon,0,n})=\v_{\varepsilon}(\tau,\tau-k,\vartheta_{-\tau}\omega,\v_{\varepsilon}(\tau-k,\tau-t_n,\vartheta_{-\tau}\omega,\v_{\varepsilon,0,n})).
		\end{align} 
		For each $k$, let $N_k$ be sufficiently large such that $t_n\geq \mathfrak{T}+k$ for all $n\geq N_k$. From Lemma \ref{LemmaUe1}, we have 
		\begin{align*}
			\|\v_{\varepsilon}(\tau-k,\tau-t_n,\vartheta_{-\tau}\omega,\v_{\varepsilon,0,n})\|^2_{\H} \leq e^{\alpha k}+ \frac{e^{-\alpha(\tau-k) }}{\min\{\mu,\alpha\}}\int_{-\infty}^{\tau-k} e^{\alpha\xi} [\z(\xi,\vartheta_{-\tau}\omega)]^2 \|\f(\cdot,\xi)\|^2_{\V'}\d \xi,
		\end{align*}
		for $k\geq N_k.$ It is obvious from the above inequality that, for each fixed $k\in \N$, the sequence $\v_{\varepsilon}(\tau-k,\tau-t_n,\vartheta_{-\tau}\omega,\v_{\varepsilon,0,n})$ is bounded in $\H$. By the diagonal process, there exists a subsequence (denoted as the same) and $\tilde{\v}_{\varepsilon,k}\in \H$ for each $k\in\N$ such that 
		\begin{align}\label{ac7}
			\v_{\varepsilon}(\tau-k,\tau-t_n,\vartheta_{-\tau}\omega,\v_{\varepsilon,0,n})\xrightharpoonup{w}\tilde{\v}_{\varepsilon,k} \ \text{ in }\  \H.
		\end{align}
		It follows from \eqref{ac6},\eqref{ac7} and Lemma \ref{weak} that for $k\in\N$,
		\begin{align}\label{ac8}
			\v_{\varepsilon}(\tau,\tau-t_n,\vartheta_{-\tau}\omega,\v_{\varepsilon,0,n})\xrightharpoonup{w} \v_{\varepsilon}(\tau,\tau-k,\vartheta_{-\tau}\omega,\tilde{\v}_{\varepsilon,k}) \ \text{ in } \ \H,
		\end{align}
		\begin{align}\label{ac9}
			\v_{\varepsilon}(\cdot,\tau-k,\vartheta_{-\tau}\omega,\v_{\varepsilon}(\tau-k,&\tau-t_n,\vartheta_{-\tau}\omega,\tilde{\v}_{\varepsilon,k}))\xrightharpoonup{w}\v_{\varepsilon}(\cdot,\tau-k,\vartheta_{-\tau}\omega,\tilde{\v}_{\varepsilon,k})\nonumber\\
			&\text{	in }\mathrm{L}^2(\tau-k,\tau;\V),
		\end{align}
		and
		\begin{align}\label{ac9'}
			\v_{\varepsilon}(\cdot,\tau-k,\vartheta_{-\tau}\omega,\v_{\varepsilon}(\tau-k,&\tau-t_n,\vartheta_{-\tau}\omega,\tilde{\v}_{\varepsilon,k}))\xrightharpoonup{w}\v_{\varepsilon}(\cdot,\tau-k,\vartheta_{-\tau}\omega,\tilde{\v}_{\varepsilon,k})\nonumber\\&\text{	in }\mathrm{L}^{r+1}(\tau-k,\tau;\wi\L^{r+1}).
		\end{align}
		Using \eqref{ac3} and \eqref{ac8}, we get that 
		\begin{align}\label{ac10}
			\v_{\varepsilon}(\tau,\tau-k,\vartheta_{-\tau}\omega,\tilde{\v}_{\varepsilon,k})=\tilde{\v}_{\varepsilon}.
		\end{align}
		From \eqref{ue0}, we also have
		\begin{align}\label{ac11}
			\frac{\d}{\d t} \|\v_{\varepsilon}\|^2_{\H} +2\alpha\|\v_{\varepsilon}\|^2_{\H} + 2\mu\|\nabla\v_{\varepsilon}\|^2_{\H} +  \frac{2\beta}{[\z(t,\omega)]^{r-1}} \|\v_{\varepsilon}\|^{r+1}_{\wi \L^{r+1}} = 2\z(t,\omega)\left\langle\f,\v_{\varepsilon}\right\rangle.
		\end{align}
		Applying the variation of constant formula to \eqref{ac11}, we get that for each $\omega\in \Omega, s\in \R$ and $\tau\geq s$,
		\begin{align}\label{ac12}
			\|\v_{\varepsilon}(\tau,s,\omega,\v_{\varepsilon,s})\|^2_{\H} &= e^{2\alpha(s-\tau)}\|\v_{\varepsilon,s}\|^2_{\H} -2\mu\int_{s}^{\tau}e^{2\alpha(\xi-\tau)}\|\nabla\v_{\varepsilon}(\xi,s,\omega,\v_{\varepsilon,s})\|^2_{\H}\d\xi\nonumber\\&\quad-2\beta\int_{s}^{\tau}\frac{e^{2\alpha(\xi-\tau)}}{[\z(\xi,\omega)]^{r-1}}\|\v_{\varepsilon}(\xi,s,\omega,\v_{\varepsilon,s})\|^{r+1}_{\wi \L^{r+1}}\d\xi\nonumber\\&\quad+2\int_{s}^{\tau}e^{2\alpha(\xi-\tau)}\z(\xi,\omega)\left\langle\f(\cdot,\xi),\v_{\varepsilon}(\xi,s,\omega,\v_{\varepsilon,s})\right\rangle\d\xi.
		\end{align}
		From \eqref{ac10} and \eqref{ac12}, we obtain 
		\begin{align}\label{ac13}
			\|\tilde{\v}_{\varepsilon}\|^2_{\H}&=\|\v_{\varepsilon}(\tau,\tau-k,\vartheta_{-\tau}\omega,\tilde{\v}_{\varepsilon,k})\|^2_{\H} \nonumber\\&= e^{-2\alpha k}\|\tilde{\v}_{\varepsilon,k}\|^2_{\H} -2\mu\int_{\tau-k}^{\tau}e^{2\alpha(\xi-\tau)}\|\nabla\v_{\varepsilon}(\xi,\tau-k,\vartheta_{-\tau}\omega,\tilde{\v}_{\varepsilon,k})\|^2_{\H}\d\xi\nonumber\\&\quad-2\beta\int_{\tau-k}^{\tau}\frac{e^{2\alpha(\xi-\tau)}}{[\z(\xi,\vartheta_{-\tau}\omega)]^{r-1}}\|\v_{\varepsilon}(\xi,\tau-k,\vartheta_{-\tau}\omega,\tilde{\v}_{\varepsilon,k})\|^{r+1}_{\wi \L^{r+1}}\d\xi\nonumber\\&\quad+2\int_{\tau-k}^{\tau}e^{2\alpha(\xi-\tau)}\z(\xi,\vartheta_{-\tau}\omega)\left\langle\f(\cdot,\xi),\v_{\varepsilon}(\xi,\tau-k,\vartheta_{-\tau}\omega,\tilde{\v}_{\varepsilon,k})\right\rangle\d\xi.
		\end{align}
		Similarly, from \eqref{ac6} and \eqref{ac12}, we deduce
		\begin{align}\label{ac14}
			&\|\v_{\varepsilon}(\tau,\tau-t_n,\vartheta_{-\tau}\omega,\v_{\varepsilon,0,n})\|^2_{\H}\nonumber\\&=\|\v_{\varepsilon}(\tau,\tau-k,\vartheta_{-\tau}\omega,\v_{\varepsilon}(\tau-k,\tau-t_n,\vartheta_{-\tau}\omega,\v_{\varepsilon,0,n}))\|^2_{\H} \nonumber\\&= e^{-2\alpha k}\|\v_{\varepsilon}(\tau-k,\tau-t_n,\vartheta_{-\tau}\omega,\v_{\varepsilon,0,n})\|^2_{\H} \nonumber\\&\quad-2\mu\int_{\tau-k}^{\tau}e^{2\alpha(\xi-\tau)}\|\nabla\v_{\varepsilon}(\xi,\tau-k,\vartheta_{-\tau}\omega,\v_{\varepsilon}(\tau-k,\tau-t_n,\vartheta_{-\tau}\omega,\v_{\varepsilon,0,n}))\|^2_{\H}\d\xi\nonumber\\&\quad-2\beta\int_{\tau-k}^{\tau}\frac{e^{2\alpha(\xi-\tau)}}{[\z(\xi,\vartheta_{-\tau}\omega)]^{r-1}}\|\v_{\varepsilon}(\xi,\tau-k,\vartheta_{-\tau}\omega,\v_{\varepsilon}(\tau-k,\tau-t_n,\vartheta_{-\tau}\omega,\v_{\varepsilon,0,n}))\|^{r+1}_{\wi \L^{r+1}}\d\xi\nonumber\\&\quad+2\int_{\tau-k}^{\tau}e^{2\alpha(\xi-\tau)}\z(\xi,\vartheta_{-\tau}\omega)\left\langle\f(\cdot,\xi),\v_{\varepsilon}(\xi,\tau-k,\vartheta_{-\tau}\omega,\v_{\varepsilon}(\tau-k,\tau-t_n,\vartheta_{-\tau}\omega,\v_{\varepsilon,0,n}))\right\rangle\d\xi.
		\end{align}
		Our next aim is to pass to  limit  in each term of the right-hand side of \eqref{ac14} as $n\to \infty$. Using \eqref{ue2}, we estimate the first term with $s=\tau-k$ and $t=t_n$ as follows
		\begin{align}\label{ac15}
			&e^{-2\alpha k}\|\v_{\varepsilon}(\tau-k,\tau-t_n,\vartheta_{-\tau}\omega,\v_{\varepsilon,0,n})\|^2_{\H} \nonumber\\&\quad\leq e^{-\alpha k}\|\v_{\varepsilon}(\tau-k,\tau-t_n,\vartheta_{-\tau}\omega,\v_{\varepsilon,0,n})\|^2_{\H} \nonumber\\&\quad\leq e^{-\alpha t_n}\|\v_{\varepsilon,0,n}\|^2_{\H}+ \frac{e^{-\alpha \tau}}{\min\{\mu,\alpha\}}\int_{-\infty}^{\tau-k} e^{\alpha\xi} [\z(\xi,\vartheta_{-\tau}\omega)]^2 \|\f(\cdot,\xi)\|^2_{\V'}\d \xi.
		\end{align}
		Making use of the temperedness of $\v_{\varepsilon,0,n},$ that is, $\v_{\varepsilon,0,n}\in D(\tau-t_n,\vartheta_{-t_n}\omega)$, we get
		\begin{align}\label{ac16}
			e^{-\alpha t_n}\|\v_{\varepsilon,0,n}\|^2_{\H}\leq e^{-\alpha t_n}\|D(\tau-t_n,\vartheta_{-t_n}\omega)\|^2_{\H}\to 0 \ \text{ as }\  n\to \infty.
		\end{align}
		By  \eqref{ac16}, we obtain from \eqref{ac15} that
		\begin{align}\label{ac17}
			&\limsup_{n\to\infty}e^{-2\alpha k}\|\v_{\varepsilon}(\tau-k,\tau-t_n,\vartheta_{-\tau}\omega,\v_{\varepsilon,0,n})\|^2_{\H}\nonumber\\& \leq \frac{e^{-\alpha \tau}}{\mu}\int_{-\infty}^{\tau-k} e^{\alpha\xi} [\z(\xi,\vartheta_{-\tau}\omega)]^2 \|\f(\cdot,\xi)\|^2_{\V'}\d \xi.
		\end{align} 
		Using the weak convergence given in \eqref{ac9}, we have
		\begin{align}\label{ac18}
			&	\lim_{n\to\infty} 2\int_{\tau-k}^{\tau}e^{2\alpha(\xi-\tau)}\z(\xi,\vartheta_{-\tau}\omega)\left\langle\f(\cdot,\xi),\v_{\varepsilon}(\xi,\tau-k,\vartheta_{-\tau}\omega,\v_{\varepsilon}(\tau-k,\tau-t_n,\vartheta_{-\tau}\omega,\v_{\varepsilon,0,n}))\right\rangle\d\xi \nonumber\\&= 2\int_{\tau-k}^{\tau}e^{2\alpha(\xi-\tau)}\z(\xi,\vartheta_{-\tau}\omega)\left\langle\f(\cdot,\xi),\v_{\varepsilon}(\xi,\tau-k,\vartheta_{-\tau}\omega,\tilde{\v}_{\varepsilon,k})\right\rangle\d\xi.
		\end{align}
		Since $e^{-2\alpha k}\leq e^{2\alpha (\xi-\tau)}\leq 1$ for $\xi\in(\tau-k,\tau)$, 
		\begin{align*}
			\left(\int_{\tau-k}^{\tau}e^{2\alpha (\xi-\tau)}\|\nabla\cdot\|^2_{\H}\d \xi\right)^{1/2} 
		\end{align*}
		defines a norm, which is equivalent to standard norm in $\mathrm{L}^2(\tau-k,\tau;\H)$. Then, the weak lower semicontinuity property of norm implies that
		\iffalse 
		\begin{align*}
			&\liminf_{n\to\infty}	\left\{2\mu\int_{\tau-k}^{\tau}e^{2\alpha(\xi-\tau)}\|\nabla\v_{\varepsilon}(\xi,\tau-k,\vartheta_{-\tau}\omega,\v_{\varepsilon}(\tau-k,\tau-t_n,\vartheta_{-\tau}\omega,\v_{\varepsilon,0,n}))\|^2_{\H}\d\xi\right\} \nonumber\\&\geq 2\mu\int_{\tau-k}^{\tau}e^{2\alpha(\xi-\tau)}\|\nabla\v_{\varepsilon}(\xi,\tau-k,\vartheta_{-\tau}\omega,\tilde{\v}_{\varepsilon,k})\|^2_{\H}\d\xi,
		\end{align*} 
		or
		\fi 
		\begin{align}\label{ac19}
			&\limsup_{n\to\infty}	\left\{-2\mu\int_{\tau-k}^{\tau}e^{2\alpha(\xi-\tau)}\|\nabla\v_{\varepsilon}(\xi,\tau-k,\vartheta_{-\tau}\omega,\v_{\varepsilon}(\tau-k,\tau-t_n,\vartheta_{-\tau}\omega,\v_{\varepsilon,0,n}))\|^2_{\H}\d\xi\right\} \nonumber\\&\leq - 2\mu\int_{\tau-k}^{\tau}e^{2\alpha(\xi-\tau)}\|\nabla\v_{\varepsilon}(\xi,\tau-k,\vartheta_{-\tau}\omega,\tilde{\v}_{\varepsilon,k})\|^2_{\H}\d\xi.
		\end{align} 
		Finally, we estimate the third term of the right-hand side of \eqref{ac14}. Let $\tilde{\omega}=\vartheta_{-\tau}\omega$. By \eqref{omega}, we find that for each $r\geq1$ and for each $\omega\in \Omega$, there exists $R_1<0$ such that for all $\xi\leq R_1$,
		\begin{align}\label{ac20}
			|\varepsilon(r-1)\tilde{\omega}(\xi)|\leq-2\alpha\xi\quad \text{ or } \quad	2\alpha\xi\leq\varepsilon(r-1)\tilde{\omega}(\xi)\leq-2\alpha\xi.
		\end{align}
		By \eqref{ac20}, we have 
		\begin{align*}
			e^{2\alpha(2\xi-\tau)}\leq e^{2\alpha(\xi-\tau)}e^{\varepsilon(r-1)\tilde{\omega}(\xi)}\leq e^{-2\alpha\tau}.
		\end{align*}
		Also, $\frac{e^{2\alpha(\xi-\tau)}}{[\z(\xi,\vartheta_{-\tau}\omega)]^{r-1}}=e^{2\alpha(\xi-\tau)}e^{\varepsilon(r-1)\tilde{\omega}(\xi)}$ and $\xi\in(\tau-k,\tau)$ implies that 
		\begin{align*}
			e^{2\alpha(\tau-2k)}\leq e^{2\alpha(2\xi-\tau)}\leq\frac{e^{2\alpha(\xi-\tau)}}{[\z(\xi,\vartheta_{-\tau}\omega)]^{r-1}}\leq e^{-2\alpha\tau}.
		\end{align*}
		Therefore
		\begin{align*}
			\left(\int_{\tau-k}^{\tau}\frac{e^{2\alpha(\xi-\tau)}}{[\z(\xi,\vartheta_{-\tau}\omega)]^{r-1}}\|\cdot\|^{r+1}_{\wi \L^{r+1}}\d \xi\right)^{\frac{1}{r+1}} 
		\end{align*}
		defines a norm, which is equivalent to the standard norm in $\mathrm{L}^{r+1}(\tau-k,\tau;\wi\L^{r+1})$. Again, using the weak lower semicontinuity property of norm, we obtain that
		\iffalse 
		\begin{align*}
			&	\liminf_{n\to\infty}\left\{2\beta\int_{\tau-k}^{\tau}\frac{e^{2\alpha(\xi-\tau)}}{[\z(\xi,\vartheta_{-\tau}\omega)]^{r-1}}\|\v_{\varepsilon}(\xi,\tau-k,\vartheta_{-\tau}\omega,\v_{\varepsilon}(\tau-k,\tau-t_n,\vartheta_{-\tau}\omega,\v_{\varepsilon,0,n}))\|^{r+1}_{\wi \L^{r+1}}\right\}\nonumber\\&\geq 2\beta\int_{\tau-k}^{\tau}\frac{e^{2\alpha(\xi-\tau)}}{[\z(\xi,\vartheta_{-\tau}\omega)]^{r-1}}\|\v_{\varepsilon}(\xi,\tau-k,\vartheta_{-\tau}\omega,\tilde{\v}_{\varepsilon,k})\|^{r+1}_{\wi \L^{r+1}}
		\end{align*}
		or
		\fi 
		\begin{align}\label{ac21}
			&	\limsup_{n\to\infty}\left\{-2\beta\int_{\tau-k}^{\tau}\frac{e^{2\alpha(\xi-\tau)}}{[\z(\xi,\vartheta_{-\tau}\omega)]^{r-1}}\|\v_{\varepsilon}(\xi,\tau-k,\vartheta_{-\tau}\omega,\v_{\varepsilon}(\tau-k,\tau-t_n,\vartheta_{-\tau}\omega,\v_{\varepsilon,0,n}))\|^{r+1}_{\wi \L^{r+1}}\right\}\nonumber\\&\leq- 2\beta\int_{\tau-k}^{\tau}\frac{e^{2\alpha(\xi-\tau)}}{[\z(\xi,\vartheta_{-\tau}\omega)]^{r-1}}\|\v_{\varepsilon}(\xi,\tau-k,\vartheta_{-\tau}\omega,\tilde{\v}_{\varepsilon,k})\|^{r+1}_{\wi \L^{r+1}}.
		\end{align}
		Combining \eqref{ac17}-\eqref{ac19} and \eqref{ac21}, and using it in \eqref{ac14}, we get 
		\begin{align}\label{ac22}
			&\limsup_{n\to\infty}\|\v_{\varepsilon}(\tau,\tau-t_n,\vartheta_{-\tau}\omega,\v_{\varepsilon,0,n})\|^2_{\H}\nonumber\\&\leq\frac{e^{-\alpha \tau}}{\min\{\mu,\alpha\}}\int_{-\infty}^{\tau-k} e^{\alpha\xi} [\z(\xi,\vartheta_{-\tau}\omega)]^2 \|\f(\cdot,\xi)\|^2_{\V'}\d \xi\nonumber\\&\quad-2\mu\int_{\tau-k}^{\tau}e^{2\alpha(\xi-\tau)}\|\nabla\v_{\varepsilon}(\xi,\tau-k,\vartheta_{-\tau}\omega,\tilde{\v}_{\varepsilon,k})\|^2_{\H}\d\xi\nonumber\\&\quad-2\beta\int_{\tau-k}^{\tau}\frac{e^{2\alpha(\xi-\tau)}}{[\z(\xi,\vartheta_{-\tau}\omega)]^{r-1}}\|\v_{\varepsilon}(\xi,\tau-k,\vartheta_{-\tau}\omega,\tilde{\v}_{\varepsilon,k})\|^{r+1}_{\wi \L^{r+1}}\d\xi\nonumber\\&\quad+2\int_{\tau-k}^{\tau}e^{2\alpha(\xi-\tau)}\z(\xi,\vartheta_{-\tau}\omega)\left\langle\f(\cdot,\xi),\v_{\varepsilon}(\xi,\tau-k,\vartheta_{-\tau}\omega,\tilde{\v}_{\varepsilon,k})\right\rangle\d\xi.
		\end{align}
		Now, using \eqref{ac13} in \eqref{ac22}, we deduce
		\begin{align}\label{ac23}
			&\limsup_{n\to\infty}\|\v_{\varepsilon}(\tau,\tau-t_n,\vartheta_{-\tau}\omega,\v_{\varepsilon,0,n})\|^2_{\H}\nonumber\\&\quad\leq\frac{e^{-\alpha \tau}}{\min\{\mu,\alpha\}}\int_{-\infty}^{\tau-k} e^{\alpha\xi} [\z(\xi,\vartheta_{-\tau}\omega)]^2 \|\f(\cdot,\xi)\|^2_{\V'}\d \xi+\|\tilde{\v}_{\varepsilon}\|^2_{\H}.
		\end{align}
		Passing the limit $k\to \infty$ in \eqref{ac23}, we get
		\begin{align}\label{ac24}
			\limsup_{n\to\infty}\|\v_{\varepsilon}(\tau,\tau-t_n,\vartheta_{-\tau}\omega,\v_{\varepsilon,0,n})\|^2_{\H}\leq\|\tilde{\v}_{\varepsilon}\|^2_{\H}.
		\end{align}
		By \eqref{ac3}-\eqref{ac4} and \eqref{ac24}, we conclude the proof.
	\end{proof}
	Now, we are presenting the results for $\u_{\varepsilon}(\cdot)$, that is, the solution of the system \eqref{SCBF}. Since the transformation is same as taken in \cite{PeriodicWang}, \cite{non-autoUpperWang} etc., the proof of following results can be done similarly as in \cite{PeriodicWang} or \cite{non-autoUpperWang}.
	\begin{lemma}\label{LemmaUe3}
		For $d=2$ with $r\geq1$, $d=3$ with $r>3$ and $d=r=3$ with $2\beta\mu\geq1$, assume that $\f\in \mathrm{L}^2_{\mathrm{loc}}(\mathbb{R};\V')$ satisfies \eqref{forcing1}. Then for every $\tau\in\R, \omega\in \Omega$ and $$D=\{D(\tau,\omega):\tau\in\R, \omega\in\Omega\}\in\mathfrak{D},$$ there exists $\mathfrak{T}=\mathfrak{T}(\tau,\omega,D)>0$ such that for all $t\geq \mathfrak{T}$, the solution $\u_{\varepsilon}(\cdot)$ of the system \eqref{SCBF} with $\omega$ replaced by $\vartheta_{-\tau}\omega$ satisfies 
		\begin{align}\label{K0}
			&	\|\u_{\varepsilon}(\tau,\tau-t,\vartheta_{-\tau}\omega,\u_{\varepsilon,\tau-t})\|^2_{\H} \nonumber\\&\leq [\z(\tau,\vartheta_{-\tau}\omega)]^{-2}+ \frac{[\z(\tau,\vartheta_{-\tau}\omega)]^{-2}}{\min\{\mu,\alpha\}}\int_{-\infty}^{\tau} e^{\alpha(\xi-\tau)} [\z(\xi,\vartheta_{-\tau}\omega)]^2 \|\f(\cdot,\xi)\|^2_{\V'}\d \xi,
		\end{align}
		where $\u_{\varepsilon,\tau-t}\in D(\tau-t,\vartheta_{-t}\omega).$
	\end{lemma}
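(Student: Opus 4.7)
The plan is to deduce Lemma \ref{LemmaUe3} directly from Lemma \ref{LemmaUe} by translating the bound on $\v_\varepsilon$ into a bound on $\u_\varepsilon$ through the Ornstein--Uhlenbeck type transformation \eqref{Trans2}. Concretely, with $\omega$ replaced by $\vartheta_{-\tau}\omega$, the relation reads
\begin{equation*}
\u_\varepsilon(s,\tau-t,\vartheta_{-\tau}\omega,\u_{\varepsilon,\tau-t}) = [\z(s,\vartheta_{-\tau}\omega)]^{-1}\,\v_\varepsilon(s,\tau-t,\vartheta_{-\tau}\omega,\v_{\varepsilon,\tau-t}),
\end{equation*}
where the initial data correspond via $\v_{\varepsilon,\tau-t}=\z(\tau-t,\vartheta_{-\tau}\omega)\,\u_{\varepsilon,\tau-t}$. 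Evaluating at $s=\tau$ and squaring the $\H$-norm yields the prefactor $[\z(\tau,\vartheta_{-\tau}\omega)]^{-2}$ appearing in \eqref{K0}.

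The first step is therefore to set $s=\tau$ in \eqref{ue} of Lemma \ref{LemmaUe}, which collapses $e^{\alpha(\tau-s)}$ to $1$ and turns $e^{-\alpha s}\int_{-\infty}^{s}e^{\alpha \xi}(\cdots)\d\xi$ into $\int_{-\infty}^{\tau}e^{\alpha(\xi-\tau)}(\cdots)\d\xi$. Multiplying through by $[\z(\tau,\vartheta_{-\tau}\omega)]^{-2}$ then produces exactly the right-hand side of \eqref{K0}. No further analytical work is needed once Lemma \ref{LemmaUe} is in hand; the time $\mathfrak{T}$ produced there transfers verbatim.

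The one nontrivial point — and the place where care is needed — is to justify invoking Lemma \ref{LemmaUe} with $\v_{\varepsilon,\tau-t}$ as initial datum: Lemma \ref{LemmaUe} requires the initial datum to lie in a family $\widetilde{D}\in\mathfrak{D}$ evaluated at $(\tau-t,\vartheta_{-t}\omega)$, whereas here $\v_{\varepsilon,\tau-t}=\z(\tau-t,\vartheta_{-\tau}\omega)\,\u_{\varepsilon,\tau-t}$ with $\u_{\varepsilon,\tau-t}\in D(\tau-t,\vartheta_{-t}\omega)$. I would define
\begin{equation*}
\widetilde{D}(\tau-t,\vartheta_{-t}\omega) := \z(\tau-t,\vartheta_{-\tau}\omega)\,D(\tau-t,\vartheta_{-t}\omega),
\end{equation*}
and verify $\widetilde{D}\in\mathfrak{D}$. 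Since $\z(\tau-t,\vartheta_{-\tau}\omega)=e^{-\varepsilon[\omega(-t)-\omega(-\tau)]}$ and $\omega(-t)/t\to 0$ as $t\to\infty$ by \eqref{omega}, the multiplicative factor grows only subexponentially; combined with the temperedness \eqref{D_1} of $D$ one checks
\begin{equation*}
\lim_{t\to\infty}e^{-ct}\,[\z(\tau-t,\vartheta_{-\tau}\omega)]^{2}\,\|D(\tau-t,\vartheta_{-t}\omega)\|_{\H}^{2}=0\quad\text{for every }c>0,
\end{equation*}
so $\widetilde{D}\in\mathfrak{D}$. This is the only mildly subtle step; everything else is algebraic substitution. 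With this verified, Lemma \ref{LemmaUe} applies to $\v_\varepsilon$ starting from $\widetilde{D}$, and the bound \eqref{K0} follows immediately from the transformation and the $s=\tau$ specialization described above.
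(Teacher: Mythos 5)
Your proposal is correct and follows exactly the route the paper intends: the paper's own ``proof'' of Lemma \ref{LemmaUe3} consists of invoking Lemma \ref{LemmaUe} together with the transformation \eqref{Trans2} and deferring the details to Lemma 5.1 of \cite{PeriodicWang}, which is precisely the substitution $s=\tau$ and multiplication by $[\z(\tau,\vartheta_{-\tau}\omega)]^{-2}$ that you carry out. Your additional check that the rescaled initial-data family $\z(\tau-t,\vartheta_{-\tau}\omega)D(\tau-t,\vartheta_{-t}\omega)$ remains tempered (via the sublinear growth \eqref{omega} of $\omega$) is exactly the point the paper leaves implicit, and you handle it correctly.
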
 
	\begin{proof}
		The proof can be completed with the help of Lemma \ref{LemmaUe}. See Lemma 5.1 in \cite{PeriodicWang}  also.
	\end{proof}
	\begin{lemma}\label{PA1}
		For $d=2$ with $r\geq1$, $d=3$ with $r>3$ and $d=r=3$ with $2\beta\mu\geq1$, assume that $\f\in \mathrm{L}^2_{\mathrm{loc}}(\mathbb{R};\V')$ satisfies \eqref{forcing2}. Then the continuous cocycle $\Phi_{\varepsilon}$ associated with the system \eqref{SCBF} possesses a closed measurable $\mathfrak{D}$-pullback absorbing set $\mathcal{K}_{\varepsilon}=\{\mathcal{K}_{\varepsilon}(\tau,\omega):\tau\in\R,\omega\in\Omega\}\in\mathfrak{D}$ and $\mathcal{K}_{\varepsilon}(\tau,\omega)$ is denoted by 
		\begin{align}\label{AB1}
			\mathcal{K}_{\varepsilon}(\tau,\omega)=\{\u\in\H:\|\u\|^2_{\H}\leq\mathcal{M}_{\varepsilon}(\tau,\omega)\},
		\end{align}
		where $\mathcal{M}_{\varepsilon}(\tau,\omega)$ is given by
		\begin{align}\label{AB2}
			\mathcal{M}_{\varepsilon}(\tau,\omega)=[\z(\tau,\vartheta_{-\tau}\omega)]^{-2}+ \frac{[\z(\tau,\vartheta_{-\tau}\omega)]^{-2}}{\min\{\mu,\alpha\}}\int_{-\infty}^{\tau} e^{\alpha(\xi-\tau)} [\z(\xi,\vartheta_{-\tau}\omega)]^2 \|\f(\cdot,\xi)\|^2_{\V'}\d \xi.
		\end{align}
	\end{lemma}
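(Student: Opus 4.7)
The plan is to verify the four defining properties of $\mathcal{K}_\varepsilon$ in turn: closedness of each fibre $\mathcal{K}_\varepsilon(\tau,\omega)$, $(\mathscr{F},\mathscr{B}(\H))$-measurability of $\omega\mapsto\mathcal{K}_\varepsilon(\tau,\omega)$, the $\mathfrak{D}$-pullback absorbing property for the cocycle $\Phi_\varepsilon$, and temperedness $\mathcal{K}_\varepsilon\in\mathfrak{D}$.

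Closedness is immediate since each $\mathcal{K}_\varepsilon(\tau,\omega)$ is a closed ball in $\H$ of radius $\sqrt{\mathcal{M}_\varepsilon(\tau,\omega)}$. Measurability reduces to the $\mathscr{F}$-measurability of $\omega\mapsto\mathcal{M}_\varepsilon(\tau,\omega)$: the integrand $[\z(\xi,\vartheta_{-\tau}\omega)]^2=e^{-2\varepsilon[\omega(\xi-\tau)-\omega(-\tau)]}$ is continuous in $\omega$ for each fixed $\xi$ under the compact-open topology on $\Omega$, and the almost-sure pointwise bound $e^{\alpha\xi}[\z(\xi,\vartheta_{-\tau}\omega)]^2\leq e^{\delta\xi}$ for $\xi$ sufficiently negative (from the proof of Lemma \ref{LemmaUe}), combined with \eqref{forcing1}, lets one pass the $\omega$-limit through the improper integral by dominated convergence. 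The absorbing property is an immediate translation of Lemma \ref{LemmaUe3}: for any $D\in\mathfrak{D}$, the threshold $\mathfrak{T}=\mathfrak{T}(\tau,\omega,D)$ supplied by that lemma, together with the cocycle identity $\Phi_\varepsilon(t,\tau-t,\vartheta_{-t}\omega,\cdot)=\u_\varepsilon(\tau,\tau-t,\vartheta_{-\tau}\omega,\cdot)$, yields $\Phi_\varepsilon(t,\tau-t,\vartheta_{-t}\omega,D(\tau-t,\vartheta_{-t}\omega))\subset\mathcal{K}_\varepsilon(\tau,\omega)$ for every $t\geq\mathfrak{T}$.

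The temperedness step is the main calculation. Using $\vartheta_{-(\tau-t)}\circ\vartheta_{-t}=\vartheta_{-\tau}$, one rewrites
\[
\mathcal{M}_\varepsilon(\tau-t,\vartheta_{-t}\omega)=e^{2\varepsilon[\omega(-t)-\omega(-\tau)]}\Bigl\{1+\frac{1}{\min\{\mu,\alpha\}}\int_{-\infty}^{\tau-t}e^{\alpha(\xi-(\tau-t))}[\z(\xi,\vartheta_{-\tau}\omega)]^2\|\f(\cdot,\xi)\|_{\V'}^2\,\d\xi\Bigr\}.
\]
Fix $c>0$. The prefactor $e^{-ct}\cdot e^{2\varepsilon[\omega(-t)-\omega(-\tau)]}$ is of the form $e^{-ct+o(t)}$ by \eqref{omega} and hence tends to zero. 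For the integral, choose $t$ large enough that $\tau-t$ lies below the random threshold $R(\omega,\tau)<0$ supplied by the proof of Lemma \ref{LemmaUe}; then $[\z(\xi,\vartheta_{-\tau}\omega)]^2\leq e^{-(\alpha-\delta)\xi}$ for all $\xi\leq\tau-t$, which majorises the integral by $e^{-\alpha(\tau-t)}\int_{-\infty}^{\tau-t}e^{\delta\xi}\|\f(\cdot,\xi)\|_{\V'}^2\,\d\xi$. The remaining task is thus to show, after substituting $s=\tau-t$, that $e^{(c-\alpha)s}\int_{-\infty}^se^{\delta\xi}\|\f(\cdot,\xi)\|_{\V'}^2\,\d\xi\to 0$ as $s\to-\infty$, which follows from the equivalent form of \eqref{forcing2} obtained by the change of variables $\eta=\xi-s$. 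Combining this with the prefactor estimate gives $e^{-ct}\mathcal{M}_\varepsilon(\tau-t,\vartheta_{-t}\omega)\to 0$, hence $\mathcal{K}_\varepsilon\in\mathfrak{D}$.

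The principal obstacle lies in this last step, where one must keep track of the shifts in both $\tau$ and $\omega$, control the exponential Wiener factors via the sublinearity \eqref{omega} of Brownian paths, and invoke \eqref{forcing2} at the correct rate. The remark in the text that the proof ``can be done similarly as in'' \cite{PeriodicWang} or \cite{non-autoUpperWang} signals that this temperedness computation -- routine but delicate -- is the only non-standard ingredient.
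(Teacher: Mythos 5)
Your overall structure (closedness, measurability, absorption via Lemma \ref{LemmaUe3}, temperedness) is the right decomposition, and the first three steps are fine; the paper itself gives no details here and simply defers to Lemma \ref{LemmaUe3} and to Lemma 5.2 of \cite{PeriodicWang}, so the temperedness computation is indeed the substance of the proof. However, your temperedness argument has a genuine gap. After majorising the Wiener factor by $[\z(\xi,\vartheta_{-\tau}\omega)]^2\leq e^{-(\alpha-\delta)\xi}$ you reduce the problem to showing $e^{(c-\alpha)s}\int_{-\infty}^{s}e^{\delta\xi}\|\f(\cdot,\xi)\|^2_{\V'}\,\d\xi\to0$ as $s\to-\infty$ for \emph{every} $c>0$. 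Writing $I(s)=\int_{-\infty}^{s}e^{\delta\xi}\|\f(\cdot,\xi)\|^2_{\V'}\,\d\xi$, the change of variables you mention shows that \eqref{forcing2} is equivalent to $e^{(c''-\delta)s}I(s)\to0$ for all $c''>0$, i.e.\ it controls $e^{\gamma s}I(s)$ only for exponents $\gamma>-\delta$. Your target exponent is $\gamma=c-\alpha$, which satisfies $\gamma>-\delta$ only when $c>\alpha-\delta$; for $0<c\leq\alpha-\delta$ the reduction is not covered by \eqref{forcing2}, and it is genuinely false in general (e.g.\ any $\f$ with $I(s)=e^{\delta s}(1+s^2)^{-1}$ near $-\infty$ satisfies \eqref{forcing2} but has $e^{(c-\alpha)s}I(s)\to\infty$ for small $c$). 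The culprit is that the bound $e^{-2\varepsilon\tilde{\omega}(\xi)}\leq e^{-(\alpha-\delta)\xi}$, borrowed from the proof of Lemma \ref{LemmaUe} where it only needs to guarantee finiteness of the integral, is far too lossy for a statement that must hold for arbitrarily small $c$.

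The repair is to exploit the full strength of the sublinearity \eqref{omega}: for any $\epsilon'>0$ there is a (random) threshold $R_{\epsilon'}<0$ with $2\varepsilon|\vartheta_{-\tau}\omega(\xi)|\leq\epsilon'|\xi|$ for $\xi\leq R_{\epsilon'}$, hence $[\z(\xi,\vartheta_{-\tau}\omega)]^2\leq e^{-\epsilon'\xi}$ there. Choosing $\epsilon'=\min\{c/4,(\alpha-\delta)/2\}$ and pulling out $e^{(\alpha-\delta-\epsilon')(\xi-(\tau-t))}\leq1$, the quantity to be controlled becomes (up to constants and an $e^{o(|s|)}$ Wiener prefactor) $e^{(c-\delta-\epsilon')s}I(s)$ with $c-\delta-\epsilon'\geq 3c/4-\delta>-\delta$, which does tend to zero by \eqref{forcing2}, with enough room left over to absorb the $e^{o(|s|)}$ factor. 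In other words, the decay rate extracted from the Brownian path must be chosen \emph{after} $c$ is fixed, not once and for all as $\alpha-\delta$.
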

	\begin{proof}
		The proof follows by using Lemma \ref{LemmaUe3}. See Lemma 5.2 in \cite{PeriodicWang} also.
	\end{proof}
	\begin{lemma}\label{PA2}
		For $d=2$ with $r\geq1$, $d=3$ with $r>3$ and $d=r=3$ with $2\beta\mu\geq1$, assume that $\f\in \mathrm{L}^2_{\mathrm{loc}}(\mathbb{R};\V')$ satisfies \eqref{forcing2}. Then the continuous cocycle $\Phi_{\varepsilon}$ associated with system \eqref{SCBF} is $\mathfrak{D}$-pullback asymptotically compact in $\H$, that is, for every $\tau\in\R, \omega\in\Omega, D=\{D(\tau,\omega):\tau\in\R,\omega\in\Omega\}\in\mathfrak{D}$, and $t_n\to\infty, \u_{\varepsilon,0,n}\in D(\tau-t_n,\vartheta_{-t_n}\omega),$ the sequence $\Phi_{\varepsilon}(t_n,\tau-t_n,\vartheta_{-t_n}\omega,\u_{\varepsilon,0,n})$ has a convergent subsequence in $\H$. 
	\end{lemma}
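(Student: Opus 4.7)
The plan is to reduce the pullback asymptotic compactness of $\Phi_{\varepsilon}$ to the corresponding property of $\v_{\varepsilon}$ already established in Lemma \ref{Asymptotic_v}, exploiting the fact that the transformation \eqref{Trans2} is merely multiplication by the scalar random factor $\z(t,\omega)$.

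First, I would rewrite the cocycle evaluation in terms of $\v_{\varepsilon}$. Using \eqref{Phi1} together with the group identity $\vartheta_{-(\tau-t_n)}\vartheta_{-t_n}\omega=\vartheta_{-\tau}\omega$, one immediately obtains
\begin{equation*}
\Phi_{\varepsilon}(t_n,\tau-t_n,\vartheta_{-t_n}\omega,\u_{\varepsilon,0,n})=[\z(\tau,\vartheta_{-\tau}\omega)]^{-1}\v_{\varepsilon}(\tau,\tau-t_n,\vartheta_{-\tau}\omega,\v_{\varepsilon,0,n}),
\end{equation*}
where $\v_{\varepsilon,0,n}:=\z(\tau-t_n,\vartheta_{-\tau}\omega)\u_{\varepsilon,0,n}$. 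Since the prefactor is a deterministic constant (for fixed $\tau,\omega$) independent of $n$, it suffices to produce a strongly convergent subsequence of $\{\v_{\varepsilon}(\tau,\tau-t_n,\vartheta_{-\tau}\omega,\v_{\varepsilon,0,n})\}$ in $\H$.

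Second, I would verify that the transformed initial data lie in a suitable tempered family. Using $\vartheta_{-\tau}\omega=\vartheta_{-\sigma}(\vartheta_{-t}\omega)$ with $\sigma=\tau-t$, the family
\begin{equation*}
\widetilde{D}(\sigma,\tilde\omega):=\{\z(\sigma,\vartheta_{-\sigma}\tilde\omega)\u:\u\in D(\sigma,\tilde\omega)\}
\end{equation*}
satisfies $\v_{\varepsilon,0,n}\in\widetilde{D}(\tau-t_n,\vartheta_{-t_n}\omega)$, and a short computation yields
\begin{equation*}
\|\widetilde{D}(\tau-t,\vartheta_{-t}\omega)\|_{\H}^{2}=e^{-2\varepsilon[\omega(-t)-\omega(-\tau)]}\|D(\tau-t,\vartheta_{-t}\omega)\|_{\H}^{2}.
\end{equation*}
The main (though mild) obstacle is confirming $\widetilde{D}\in\mathfrak{D}$: given any $c>0$, the sub-exponential growth of $\omega$ recorded in \eqref{omega} permits the bound $e^{-2\varepsilon\omega(-t)}\leq e^{ct/2}$ for all $t$ sufficiently large, so that the temperedness of $D$ applied at rate $c/2$ absorbs the excess and yields the desired condition \eqref{D_1} for $\widetilde{D}$.

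Finally, I would apply Lemma \ref{Asymptotic_v} with $\widetilde{D}$ in place of $D$: it produces a subsequence of $\v_{\varepsilon}(\tau,\tau-t_n,\vartheta_{-\tau}\omega,\v_{\varepsilon,0,n})$ converging strongly in $\H$. Multiplying by the nonzero deterministic scalar $[\z(\tau,\vartheta_{-\tau}\omega)]^{-1}$ preserves this convergence and delivers the required subsequential limit of $\Phi_{\varepsilon}(t_n,\tau-t_n,\vartheta_{-t_n}\omega,\u_{\varepsilon,0,n})$ in $\H$.
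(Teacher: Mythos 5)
Your proof is correct and follows essentially the same route as the paper: the paper's own argument is a one-line reduction via the transformation \eqref{Trans2} to Lemma \ref{Asymptotic_v} (citing Lemma 5.3 of \cite{PeriodicWang} for details). Your only addition is the explicit verification that the rescaled initial data form a family $\widetilde{D}\in\mathfrak{D}$, using \eqref{omega} to absorb the factor $e^{-2\varepsilon[\omega(-t)-\omega(-\tau)]}$ at rate $c/2$ — a detail the paper leaves implicit, and which you handle correctly.
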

	\begin{proof}
		By \eqref{Trans2}, one can write
		\begin{align*}
			\u_{\varepsilon}(\tau,\tau-t_n,\vartheta_{-\tau}\omega,\u_{\varepsilon,0,n})=\frac{\v_{\varepsilon}(\tau,\tau-t_n,\vartheta_{-\tau}\omega,\v_{\varepsilon,0,n})}{\z(\tau,\vartheta_{-\tau}\omega)},
		\end{align*}
		and the proof can be completed using Lemmas \ref{LemmaUe3} and \ref{Asymptotic_v}. See Lemma 5.3  in \cite{PeriodicWang} also.
	\end{proof}
	Now, we present the main result of this section, that is, the existence and uniqueness of tempered pullback attractors for SCBF equations. From Lemmas \ref{PA1} and  \ref{PA2} and the abstract theory of random pullback attractors for non-autonomous non-compact random dynamical system (Theorem 2.23, \cite{SandN_Wang}), we conclude the following result immediately.
	\begin{theorem}\label{PullbackAttractor}
		For $d=2$ with $r\geq1$, $d=3$ with $r>3$ and $d=r=3$ with $2\beta\mu\geq1$, assume that $\f\in \mathrm{L}^2_{\mathrm{loc}}(\mathbb{R};\V')$ satisfies \eqref{forcing2}. Then, the continuous cocycle $\Phi_{\varepsilon}$ associated with system \eqref{SCBF} has a unique random $\mathfrak{D}$-pullback attractor $$\mathscr{A}_{\varepsilon}=\{\mathscr{A}_{\varepsilon}(\tau,\omega):\tau\in\R,\omega\in\Omega\}\in\mathfrak{D}$$ in $\H$. Furthermore, for each $\tau\in\Omega$ and $\omega\in\Omega$,
		\begin{align}
			\mathscr{A}_{\varepsilon}(\tau,\omega)&=\Omega(\mathcal{K}_{\varepsilon},\tau,\omega)=\bigcup_{D\in\mathfrak{D}}\Omega(D,\tau,\omega)\label{AT1}\\
			&=\{\psi(0,\tau,\omega):\psi\text{ is any } \mathfrak{D}\text{-complete orbit of } \Phi_{\varepsilon}\},\label{AT2}
		\end{align}
		where
		\begin{align*}
			\Omega(\mathcal{K}_{\varepsilon},\tau,\omega)=\bigcap_{s\geq0}\overline{\bigcup_{t\geq s}\Phi_{\varepsilon}(t,\tau-t,\vartheta_{-t}\omega,\mathcal{K}_{\varepsilon}(\tau-t,\vartheta_{-t}\omega))}. 
		\end{align*}
	\end{theorem}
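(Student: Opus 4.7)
The plan is to invoke the abstract existence theorem for random pullback attractors of non-autonomous non-compact random dynamical systems, namely Theorem 2.23 of \cite{SandN_Wang}, so essentially no new analysis is required at this stage. The strategy is to check, one by one, the hypotheses of that abstract result and then read off both the existence/uniqueness statement and the structural formulas \eqref{AT1}--\eqref{AT2}.

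First I would list the structural prerequisites that are already in place. The base system $(\mathbb{R},\{\theta_{t}\}_{t\in\mathbb{R}})$ is a group via the shift \eqref{theta}, and $(\Omega,\mathscr{F},\mathbb{P},\{\vartheta_{t}\}_{t\in\mathbb{R}})$ is a metric dynamical system via \eqref{vartheta}. The map $\Phi_{\varepsilon}$ defined in \eqref{Phi1} has been shown in Section \ref{sec4} to be a continuous cocycle on $\H$ over these two driving systems, using the well-posedness of \eqref{CCBF}, the continuity in initial data established in Lemma \ref{Continuity}, the $(\mathscr{F},\mathscr{B}(\H))$-measurability in $\omega$ of the conjugated flow $\v_{\varepsilon}$, and the identity \eqref{Phi4}. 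The universe $\mathfrak{D}$ defined by \eqref{D_11} is neighborhood closed, as already noted after its definition.

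Next I would verify the two dynamical hypotheses of Theorem 2.23 of \cite{SandN_Wang}. The existence of a closed measurable $\mathfrak{D}$-pullback absorbing set $\mathcal{K}_{\varepsilon}=\{\mathcal{K}_{\varepsilon}(\tau,\omega):\tau\in\mathbb{R},\omega\in\Omega\}$ given by \eqref{AB1}--\eqref{AB2}, together with its membership in $\mathfrak{D}$, is exactly the content of Lemma \ref{PA1}; measurability of $\omega\mapsto \mathcal{M}_{\varepsilon}(\tau,\omega)$ follows from the measurability of $\omega\mapsto \z(\cdot,\vartheta_{-\tau}\omega)$ together with Fubini's theorem applied to the integral in \eqref{AB2}, and the tempered growth condition \eqref{D_1} for $\mathcal{K}_{\varepsilon}$ is guaranteed by the hypothesis \eqref{forcing2} combined with the sub-exponential growth of $\omega$ in \eqref{omega}. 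The $\mathfrak{D}$-pullback asymptotic compactness of $\Phi_{\varepsilon}$ in $\H$ is precisely Lemma \ref{PA2}, which transports the asymptotic compactness of $\v_{\varepsilon}$ from Lemma \ref{Asymptotic_v} through the conjugation \eqref{Trans2}.

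With these ingredients, the abstract theorem applies directly and yields a unique random $\mathfrak{D}$-pullback attractor $\mathscr{A}_{\varepsilon}\in\mathfrak{D}$ that is invariant, compact-valued, measurable, and attracts every family in $\mathfrak{D}$ in the pullback sense. The representation \eqref{AT1} is the standard formula expressing $\mathscr{A}_{\varepsilon}(\tau,\omega)$ as the $\Omega$-limit set of the absorbing family $\mathcal{K}_{\varepsilon}$, which in turn equals the union of $\Omega$-limit sets over all $D\in\mathfrak{D}$ by invariance and the absorbing property; the representation \eqref{AT2} is the complete-orbit characterization, which again is part of the abstract conclusion of Theorem 2.23 in \cite{SandN_Wang}. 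Since every step is either already proved in this paper or is part of the abstract result, there is no genuine obstacle; the only routine task is to check that the measurability/temperedness hypotheses of \cite{SandN_Wang} are met in the precise form stated there, which reduces to standard bookkeeping on $\mathcal{K}_{\varepsilon}$.
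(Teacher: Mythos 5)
Your proposal matches the paper's argument exactly: the paper deduces Theorem \ref{PullbackAttractor} immediately from Lemma \ref{PA1} (closed measurable $\mathfrak{D}$-pullback absorbing set), Lemma \ref{PA2} ($\mathfrak{D}$-pullback asymptotic compactness), and Theorem 2.23 of \cite{SandN_Wang}, which is precisely the route you take. Your additional verification of the cocycle, measurability, and neighborhood-closedness prerequisites is correct bookkeeping that the paper leaves implicit.
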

	Next, we provide the result on the existence of periodic random pullback attractor for the system \eqref{SCBF}. Assume that $\f:\R\to\V'$ is periodic with period $T>0$ and $\f\in\mathrm{L}^2_{\mathrm{loc}}(\R;\V')$. Then \eqref{forcing2} holds for any $\delta>0$ and  $\Phi_{\varepsilon}$ is also periodic with period $T>0$. Certainly, for every $\tau\in\R, \omega\in\Omega, t\geq0$ and $\bar{\u}\in\H$, we obtain
	\begin{align*}
		\Phi_{\varepsilon}(t,\tau+T,\omega,\bar{\u})&=\u_{\varepsilon}(t+\tau+T,\tau+T,\vartheta_{-\tau-T}\omega,\bar{\u})\nonumber\\&=\u_{\varepsilon}(t+\tau,\tau,\vartheta_{-\tau}\omega,\bar{\u})=\Phi_{\varepsilon}(t,\tau,\omega,\bar{\u}).
	\end{align*}
	Also, by \eqref{K0} we get that the $\mathfrak{D}$-pullback absorbing set $\mathcal{K}_{\varepsilon}$ of $\Phi_{\varepsilon}$ defined by \eqref{K1} is also periodic with period $T>0$, that is, $\mathcal{K}_{\varepsilon}(\tau+T,\omega)=\mathcal{K}_{\varepsilon}(\tau,\omega)$ for all $\tau\in\R$ and $\omega\in\Omega$. Hence, by the theory of periodic random pullback attractor (Theorem 2.24, \cite{SandN_Wang}), we obtain the following result.
	\begin{theorem}\label{PRPA}
		For $d=2$ with $r\geq1$, $d=3$ with $r>3$ and $d=r=3$ with $2\beta\mu\geq1$, assume that $\f:\R\to\V'$ is periodic with period $T>0$ and $\f\in\mathrm{L}^2_{\mathrm{loc}}(\R;\V')$ satisfies \eqref{forcing1}. Then the continuous cocycle $\Phi_{\varepsilon}$ associated with system \eqref{SCBF} has a unique $T$-periodic random $\mathfrak{D}$-pullback attractor $\mathscr{A}_{\varepsilon}\in\mathfrak{D}$ in $\H$.
	\end{theorem}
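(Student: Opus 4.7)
The proof is essentially a matter of verifying the hypotheses of the abstract periodic random pullback attractor theorem (Theorem 2.24 of \cite{SandN_Wang}) in our setting, and all the hard analytic work has already been done in establishing Theorem \ref{PullbackAttractor}. The plan is to (i) upgrade the continuous cocycle $\Phi_\varepsilon$ to a $T$-periodic cocycle; (ii) exhibit a $T$-periodic $\mathfrak{D}$-pullback absorbing set; and (iii) invoke the existing $\mathfrak{D}$-pullback asymptotic compactness (Lemma \ref{PA2}) so that the abstract machinery applies.

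For step (i), the key observation is that the $T$-periodicity of $\f$ transfers to the solution map via uniqueness of solutions to \eqref{CCBF}: the shift-by-$T$ computation given in the paragraph preceding the theorem, namely $\Phi_\varepsilon(t,\tau+T,\omega,\bar{\u})=\u_\varepsilon(t+\tau+T,\tau+T,\vartheta_{-\tau-T}\omega,\bar{\u})=\u_\varepsilon(t+\tau,\tau,\vartheta_{-\tau}\omega,\bar{\u})=\Phi_\varepsilon(t,\tau,\omega,\bar{\u})$, relies on the fact that the coefficients of \eqref{CCBF} depend on the time variable only through $\f(\cdot)$ and through $\z(\cdot,\omega)$, the latter being absorbed by the shift on $\omega$. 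Since $\f$ is $T$-periodic and $\mathrm{L}^2_{\mathrm{loc}}$, the $T$-translated system has the same law as the original, and uniqueness of solutions then forces $\Phi_\varepsilon(\cdot,\tau+T,\omega,\cdot)=\Phi_\varepsilon(\cdot,\tau,\omega,\cdot)$.

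For step (ii), I would verify directly from the explicit formula \eqref{AB2} for the radius $\mathcal{M}_\varepsilon(\tau,\omega)$ of the absorbing set $\mathcal{K}_\varepsilon(\tau,\omega)$ that $\mathcal{M}_\varepsilon(\tau+T,\omega)=\mathcal{M}_\varepsilon(\tau,\omega)$. Writing out $\mathcal{M}_\varepsilon(\tau+T,\omega)$, one changes variables $\xi\mapsto\xi+T$ inside the integral on $(-\infty,\tau+T]$, uses the periodicity $\|\f(\cdot,\xi+T)\|_{\V'}=\|\f(\cdot,\xi)\|_{\V'}$, and then simplifies the exponential factors together with $\z(\tau+T,\vartheta_{-\tau-T}\omega)$ and $\z(\xi+T,\vartheta_{-\tau-T}\omega)$, which, by definition of $\vartheta$, reduce to $\z(\tau,\vartheta_{-\tau}\omega)$ and $\z(\xi,\vartheta_{-\tau}\omega)$ respectively. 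This gives $\mathcal{K}_\varepsilon(\tau+T,\omega)=\mathcal{K}_\varepsilon(\tau,\omega)$, so $\mathcal{K}_\varepsilon$ is a periodic closed measurable $\mathfrak{D}$-pullback absorbing set in $\mathfrak{D}$.

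With the periodicity of $\Phi_\varepsilon$, the periodic absorbing set $\mathcal{K}_\varepsilon\in\mathfrak{D}$, and the $\mathfrak{D}$-pullback asymptotic compactness from Lemma \ref{PA2}, all the hypotheses of Theorem 2.24 of \cite{SandN_Wang} are in place; quoting it yields a unique $T$-periodic random $\mathfrak{D}$-pullback attractor $\mathscr{A}_\varepsilon\in\mathfrak{D}$. The only mildly delicate point is checking that the condition \eqref{forcing1} together with $T$-periodicity of $\f$ implies the stronger tail-decay assumption \eqref{forcing2} used in Lemmas \ref{PA1}--\ref{PA2}; this is immediate, since for $T$-periodic $\f\in\mathrm{L}^2_{\mathrm{loc}}(\R;\V')$ the quantity $\int_{-\infty}^{0}e^{\delta\xi}\|\f(\cdot,\xi+s)\|_{\V'}^2\d\xi$ is uniformly bounded in $s$ by a geometric sum of the $\mathrm{L}^2$-norm of $\f$ over a single period, and the factor $e^{cs}$ drives the product to zero as $s\to-\infty$ for any $c>0$ and any $\delta\geq 0$. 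I expect this tail-decay verification, rather than any of the abstract steps, to be the only place that requires a moment of care.
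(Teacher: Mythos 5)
Your proposal is correct and follows essentially the same route as the paper: periodicity of $\f$ gives \eqref{forcing2}, the shift computation gives $T$-periodicity of $\Phi_{\varepsilon}$, the explicit formula \eqref{AB2} gives $T$-periodicity of the absorbing set $\mathcal{K}_{\varepsilon}$, and Theorem 2.24 of \cite{SandN_Wang} is then invoked. One tiny slip: your geometric-sum bound for $\int_{-\infty}^{0}e^{\delta\xi}\|\f(\cdot,\xi+s)\|^2_{\V'}\d\xi$ requires $\delta>0$ (not ``any $\delta\geq 0$''), but since the hypothesis only asks for some $\delta\in[0,\alpha)$ this is harmless, and the paper likewise records that \eqref{forcing2} holds for any $\delta>0$.
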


	\section{Upper semicontinuity of random $\mathfrak{D}$-pullback attractors}\label{sec6}\setcounter{equation}{0}
	In this section, we establish the result on upper semicontinuity of random $\mathfrak{D}$-pullback attractors for the system \eqref{SCBF}, that is, $\mathscr{A}_{\varepsilon}(\tau,\omega)$, when $\varepsilon\to0$, by following the concept introduced in \cite{non-autoUpperWang} for non-autonomous non-compact random dynamical systems. Also, we consider $0<\varepsilon\leq1$ throughout this section. 
	
	Given $0<\varepsilon\leq 1$, It implies from Lemma \ref{PA1} that, for every $D=\{D(\omega)\}_{\omega\in\Omega}\in\mathfrak{D}$ and $\mathbb{P}$-a.e. $\omega\in \Omega$, there exists $\mathfrak{T}(\tau,\omega,D)>0$, which does not depend on $\varepsilon$, such that for all $t\geq \mathfrak{T}(\tau,\omega,D),$
	\begin{align*}
		\|\Phi_{\varepsilon}(t,\tau-t,\vartheta_{-t}\omega,D(\vartheta_{-t}\omega))\|^2_{\H}\leq \mathcal{M}_{\varepsilon}(\tau,\omega),
	\end{align*}
	where $\mathcal{M}_{\varepsilon}(\tau,\omega)$ is given by \eqref{AB2}. Given $0<\varepsilon\leq1$, $\tau\in\R$ and $\omega\in\Omega$, from \eqref{AB1} we write $\mathcal{K}_{\varepsilon}$ for the $\mathfrak{D}$-pullback absorbing set of $\Phi_{\varepsilon}$ as 
	\begin{align}\label{ab_H}
		\mathcal{K}_{\varepsilon}(\tau,\omega)=\{\u\in\H:\|\u\|^2_{\H}\leq \mathcal{M}_{\varepsilon}(\tau,\omega)\}.
	\end{align}
	Given $\tau\in\R$ and $\omega\in\Omega$, denote by 
	\begin{align}\label{ab_H3}
		\mathcal{B}(\tau,\omega)=\{\u\in\H:\|\u\|^2_{\H}\leq \mathcal{R}(\tau,\omega)\}.
	\end{align}
	where $\mathcal{R}(\tau,\omega)$ is given by
	\begin{align}\label{ab_H4}
		\mathcal{R}(\tau,\omega)=e^{2\left|\omega(-\tau)\right|}+ \frac{1}{\min\{\mu,\alpha\}}\int_{-\infty}^{\tau} e^{\alpha(\xi-\tau)} e^{2\left|\omega(\xi-\tau)\right|} \|\f(\cdot,\xi)\|^2_{\V'}\d \xi.
	\end{align}
	Then for every $0<\varepsilon\leq1$, we have
	\begin{align}\label{K1}
		\bigcup_{0<\varepsilon\leq1}\mathcal{K}_{\varepsilon}(\tau,\omega)\subseteq\mathcal{B}(\tau,\omega).
	\end{align}
	Note that Lemma \ref{weak} can also be written in the following form, which will help us to prove Lemma \ref{precompact}.
	\begin{lemma}\label{weak1}
		For $d=2$ with $r\geq1$, $d=3$ with $r>3$ and $d=r=3$ with $2\beta\mu\geq1$, assume that $\f\in\mathrm{L}^2_{\mathrm{loc}}(\R;\V')$. Let $\tau\in\R, \omega\in \Omega, \varepsilon_n\to\varepsilon_0$ and $ \v_{n,j}, \tilde{\v}_{j}\in \H$ for all $n\in\N.$ If $\v_{n,j}\xrightharpoonup{w}\tilde{\v}_{j}$ in $\H$, then the solutions $\v_{\varepsilon_n}(\cdot)$ and $\v_{\varepsilon_0}(\cdot)$ of the system \eqref{CCBF} with $\varepsilon$ replaced by $\varepsilon_n$ and $\varepsilon_0$, respectively. Then, we have
		\begin{itemize}
			\item [(i)] $\v_{\varepsilon_n}(\xi,\tau,\omega,\v_{n,j})\xrightharpoonup{w}\v_{\varepsilon_0}(\xi,\tau,\omega,\tilde{\v}_{j})$ in $\H$ for all $\xi\geq \tau$.
			\item [(ii)] $\v_{\varepsilon_n}(\cdot,\tau,\omega,\v_{n,j})\xrightharpoonup{w}\v_{\varepsilon_0}(\cdot,\tau,\omega,\tilde{\v}_{j})$ in $\mathrm{L}^2(\tau,\tau+T;\V)$ and $\mathrm{L}^{r+1}(\tau,\tau+T;\wi\L^{r+1})$ for every $T>0$.
		\end{itemize}
	\end{lemma}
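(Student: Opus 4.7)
The plan is to adapt the proof of Lemma \ref{weak} (the case $\varepsilon_n\equiv\varepsilon_0$) by additionally tracking the parameter convergence $\varepsilon_n\to\varepsilon_0$. Fix $T>0$. Since $\omega\in\mathrm{C}(\R,\R)$ is bounded on $[\tau,\tau+T]$ and $\{\varepsilon_n\}$ is bounded, the $\varepsilon$-dependent multipliers $e^{\pm\varepsilon_n\omega(t)}$ are uniformly bounded in $n$ on $[\tau,\tau+T]$ and converge uniformly there to $e^{\pm\varepsilon_0\omega(t)}$. The weak convergence $\v_{n,j}\xrightharpoonup{w}\tilde{\v}_{j}$ in $\H$ gives $\sup_n\|\v_{n,j}\|_\H<\infty$, so testing \eqref{CCBF} with $\v_{\varepsilon_n}$ and arguing as in \eqref{ue0}--\eqref{ue1} yields an $n$-uniform bound
\begin{align*}
\sup_n\left\{\|\v_{\varepsilon_n}\|_{\mathrm{L}^\infty(\tau,\tau+T;\H)}+\|\v_{\varepsilon_n}\|_{\mathrm{L}^2(\tau,\tau+T;\V)}+\|\v_{\varepsilon_n}\|_{\mathrm{L}^{r+1}(\tau,\tau+T;\wi\L^{r+1})}\right\}<\infty,
\end{align*}
and via the equation a uniform bound on $\partial_t\v_{\varepsilon_n}$ in $\mathrm{L}^p(\tau,\tau+T;(\V\cap\wi\L^{r+1})')$ for some $p>1$.

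Next, Banach--Alaoglu extracts a subsequence with $\v_{\varepsilon_n}\xrightharpoonup{w}\v^\ast$ in $\mathrm{L}^2(\tau,\tau+T;\V)$ and $\mathrm{L}^{r+1}(\tau,\tau+T;\wi\L^{r+1})$, and weakly-$\ast$ in $\mathrm{L}^\infty(\tau,\tau+T;\H)$. The local Aubin--Lions lemma (using $\mathrm{H}^1(B_R)\hookrightarrow\L^2(B_R)$ compactly on each ball $B_R\subset\R^d$) gives strong convergence $\v_{\varepsilon_n}\to\v^\ast$ in $\mathrm{L}^2(\tau,\tau+T;\L^2(B_R;\R^d))$ and hence a.e.\ on $[\tau,\tau+T]\times\R^d$ along a further subsequence. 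Combined with the uniform bound in $\mathrm{C}([\tau,\tau+T];\H)$ and equicontinuity into a weaker space (Arzel\`a--Ascoli), this also yields $\v_{\varepsilon_n}(\xi,\tau,\omega,\v_{n,j})\xrightharpoonup{w}\v^\ast(\xi)$ in $\H$ for every $\xi\in[\tau,\tau+T]$.

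The limit is identified by passing to $n\to\infty$ in the weak formulation against test functions $\phi\in\mathcal{V}$. The Stokes and linear damping terms converge by weak convergence; the forcing $\z(t,\omega)\f(t)$ converges strongly in $\mathrm{L}^2(\tau,\tau+T;\V')$ since the coefficient converges uniformly and $\f\in\mathrm{L}^2_{\mathrm{loc}}(\R;\V')$. For the convective term, split
\begin{align*}
e^{\varepsilon_n\omega(t)}\langle\B(\v_{\varepsilon_n}),\phi\rangle-e^{\varepsilon_0\omega(t)}\langle\B(\v^\ast),\phi\rangle&=\bigl(e^{\varepsilon_n\omega(t)}-e^{\varepsilon_0\omega(t)}\bigr)\langle\B(\v_{\varepsilon_n}),\phi\rangle\\&\quad+e^{\varepsilon_0\omega(t)}\langle\B(\v_{\varepsilon_n})-\B(\v^\ast),\phi\rangle;
\end{align*}
the first term vanishes by uniform convergence of the coefficient together with the $n$-uniform bound on $\B(\v_{\varepsilon_n})$ in $\mathrm{L}^2(\tau,\tau+T;\V')$ (via \eqref{b1} for $d=2$ and \eqref{b3} for $d=3$), while the second is handled by $\B(\v_{\varepsilon_n})-\B(\v^\ast)=\B(\v_{\varepsilon_n}-\v^\ast,\v_{\varepsilon_n})+\B(\v^\ast,\v_{\varepsilon_n}-\v^\ast)$ and the local strong convergence on $\mathrm{supp}(\phi)$. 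The Forchheimer term $e^{(r-1)\varepsilon_n\omega(t)}\langle\mathcal{C}(\v_{\varepsilon_n}),\phi\rangle$ is treated by the same splitting, using the a.e.\ convergence together with uniform integrability of $|\v_{\varepsilon_n}|^{r-1}\v_{\varepsilon_n}$ in $\mathrm{L}^{\frac{r+1}{r}}$ to invoke Vitali.

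To pin down the initial condition, test the integrated equation against $\varphi(t)\phi$ with $\varphi\in\mathrm{C}^1([\tau,\tau+T])$, $\varphi(\tau+T)=0$, $\varphi(\tau)=1$: integration by parts together with $(\v_{n,j},\phi)\to(\tilde{\v}_j,\phi)$ yields $\v^\ast(\tau)=\tilde{\v}_j$, so $\v^\ast=\v_{\varepsilon_0}(\cdot,\tau,\omega,\tilde{\v}_j)$ by the uniqueness statement used in Lemma \ref{Continuity}. Uniqueness of the limit upgrades the subsequential convergence to that of the full sequence, proving (i) and (ii). The principal technical obstacle is the joint handling of the $\varepsilon$-dependent coefficients and the nonlinearities on the unbounded domain: the splitting above must be carried out against compactly supported test functions before passing to the limit, with the convergence made precise by balancing uniform convergence of the multiplier against the merely $\mathrm{L}^p$-type uniform bounds on $\B(\v_{\varepsilon_n})$ and $\mathcal{C}(\v_{\varepsilon_n})$.
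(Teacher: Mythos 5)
Your proposal is correct in substance. Note first that the paper does not actually prove this lemma: it observes that Lemma \ref{weak1} is a restatement of Lemma \ref{weak} with a varying parameter, and Lemma \ref{weak} itself is dispatched by citing the ``standard method'' of Lemmas 5.2 and 5.3 in \cite{KM}. What you have written is a correct, self-contained instantiation of that standard method: uniform energy bounds from \eqref{ue0}--\eqref{ue1} (valid uniformly in $n$ because $e^{\pm\varepsilon_n\omega(t)}$ is uniformly bounded and uniformly convergent on $[\tau,\tau+T]$ and weakly convergent initial data are norm-bounded), Banach--Alaoglu plus a local Aubin--Lions argument on balls, identification of the limit in the weak formulation against compactly supported test functions, recovery of the initial datum $\tilde{\v}_j$, and uniqueness of the limit to upgrade from subsequences to the full sequence. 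The extra ingredient beyond Lemma \ref{weak} --- splitting each nonlinear term into a piece controlled by the uniform convergence $e^{\varepsilon_n\omega}\to e^{\varepsilon_0\omega}$ and a piece handled exactly as in the fixed-$\varepsilon$ case --- is precisely what is needed and is handled correctly. Two minor points: (a) for $d=3$ the bound \eqref{b3} gives $\B(\v_{\varepsilon_n})$ uniformly bounded in $\mathrm{L}^{4/3}(\tau,\tau+T;\V')$, not $\mathrm{L}^{2}(\tau,\tau+T;\V')$ as you state; this does not affect the argument, since you only need $\langle\B(\v_{\varepsilon_n}),\phi\rangle$ uniformly integrable in time to kill the first term of your splitting. (b) An alternative, equally standard identification of the nonlinear limits (and the one more commonly used in the cited literature for CBF-type equations) is the Minty--Browder monotonicity trick exploiting \eqref{MO_c} and \eqref{441}, which avoids the pointwise a.e.\ convergence and Vitali step; your local-compactness route is a legitimate substitute.
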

	
	\begin{lemma}\label{precompact}
		For $0<\varepsilon\leq 1, d=2$ with $r\geq1, d=3$ with $r>3$ and $d=r=3$ with $2\beta\mu\geq1$, assume that $\f\in\mathrm{L}^2_{\mathrm{loc}}(\R;\V')$ and satisfies \eqref{forcing2}. Let $\tau\in\R$ and $\omega\in\Omega$ be fixed. If $\varepsilon_n\to\varepsilon_0$ as $n\to\infty$ and $\u_n\in\mathscr{A}_{\varepsilon_n}(\tau,\omega)$, then the sequence $\{\u_n\}_{n\in\N}$ has a convergent subsequence in $\H$.
	\end{lemma}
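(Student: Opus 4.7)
The plan is to adapt the energy equation argument of Lemma \ref{Asymptotic_v} to the setting of varying noise intensities $\varepsilon_n$, using invariance of the random attractor as the analogue of pullback absorption. Since $\u_n\in\mathscr{A}_{\varepsilon_n}(\tau,\omega)$ and $\mathscr{A}_{\varepsilon_n}$ is strictly invariant under $\Phi_{\varepsilon_n}$, for every $m\in\N$ one can pick $\u_{n,m}\in\mathscr{A}_{\varepsilon_n}(\tau-m,\vartheta_{-m}\omega)$ with
\begin{equation*}
\u_n=\Phi_{\varepsilon_n}(m,\tau-m,\vartheta_{-m}\omega,\u_{n,m})=\u_{\varepsilon_n}(\tau,\tau-m,\vartheta_{-\tau}\omega,\u_{n,m}).
\end{equation*}
Because $\mathscr{A}_{\varepsilon_n}\in\mathfrak{D}$ and \eqref{K1} gives $\mathscr{A}_{\varepsilon_n}(\tau-m,\vartheta_{-m}\omega)\subseteq\mathcal{B}(\tau-m,\vartheta_{-m}\omega)$, the family $\{\u_{n,m}\}_n$ is bounded in $\H$ by a constant independent of $n$ (but depending on $m,\tau,\omega$). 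After applying the transformation \eqref{Trans2}, $\v_{n,m}:=\z_{\varepsilon_n}(\tau-m,\vartheta_{-\tau}\omega)\u_{n,m}$ is likewise bounded uniformly in $n$, since $\varepsilon_n\in(0,1]$ forces the cocycle factor $\z$ to be bounded on finite time intervals.

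Next I would run a diagonal extraction: pass to a subsequence (not relabeled) such that $\u_n\xrightharpoonup{w}\tilde{\u}$ in $\H$ and, for each $m\in\N$, $\v_{n,m}\xrightharpoonup{w}\tilde{\v}_m$ in $\H$. Lemma \ref{weak1} then yields
\begin{equation*}
\v_{\varepsilon_n}(\xi,\tau-m,\vartheta_{-\tau}\omega,\v_{n,m})\xrightharpoonup{w}\v_{\varepsilon_0}(\xi,\tau-m,\vartheta_{-\tau}\omega,\tilde{\v}_m)\quad\text{in }\H,\ \xi\in[\tau-m,\tau],
\end{equation*}
together with the corresponding weak convergences in $\mathrm{L}^2(\tau-m,\tau;\V)$ and $\mathrm{L}^{r+1}(\tau-m,\tau;\wi\L^{r+1})$. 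Combined with pointwise convergence $\z_{\varepsilon_n}(\tau,\vartheta_{-\tau}\omega)\to\z_{\varepsilon_0}(\tau,\vartheta_{-\tau}\omega)$ and the identity $\u_n=\v_{\varepsilon_n}(\tau,\tau-m,\vartheta_{-\tau}\omega,\v_{n,m})/\z_{\varepsilon_n}(\tau,\vartheta_{-\tau}\omega)$, uniqueness of weak limits gives $\tilde{\u}=\v_{\varepsilon_0}(\tau,\tau-m,\vartheta_{-\tau}\omega,\tilde{\v}_m)/\z_{\varepsilon_0}(\tau,\vartheta_{-\tau}\omega)$ for every $m$, so $\tilde{\u}$ is genuinely well defined.

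As in Lemma \ref{Asymptotic_v}, strong convergence reduces to the upper bound $\limsup_n\|\u_n\|_{\H}\leq\|\tilde{\u}\|_{\H}$. I would write the energy identity \eqref{ac12} for $\v_{\varepsilon_n}(\tau,\tau-m,\vartheta_{-\tau}\omega,\v_{n,m})$, divide by $[\z_{\varepsilon_n}(\tau,\vartheta_{-\tau}\omega)]^2$, and pass to the limit $n\to\infty$ term by term. The initial-data term is controlled by $e^{-2\alpha m}\|\mathcal{B}(\tau-m,\vartheta_{-m}\omega)\|_\H^2$, which is uniform in $n$ and becomes arbitrarily small as $m\to\infty$ thanks to \eqref{forcing2} and the temperedness built into \eqref{ab_H4}. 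The forcing term converges by the weak convergence in $\mathrm{L}^2(\tau-m,\tau;\V)$ together with the uniform convergence of the scalar factor $\z_{\varepsilon_n}(\xi,\vartheta_{-\tau}\omega)\to\z_{\varepsilon_0}(\xi,\vartheta_{-\tau}\omega)$ on $[\tau-m,\tau]$. The viscous term is handled by weak lower semicontinuity in $\mathrm{L}^2(\tau-m,\tau;\H)$ with the $\varepsilon$-independent weight $e^{2\alpha(\xi-\tau)}$.

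The main obstacle is the Forchheimer term
\begin{equation*}
-2\beta\int_{\tau-m}^{\tau}\frac{e^{2\alpha(\xi-\tau)}}{[\z_{\varepsilon_n}(\xi,\vartheta_{-\tau}\omega)]^{r-1}}\|\v_{\varepsilon_n}(\xi,\tau-m,\vartheta_{-\tau}\omega,\v_{n,m})\|_{\wi\L^{r+1}}^{r+1}\,\d\xi,
\end{equation*}
where both the weight and the integrand depend on $n$. My strategy is to split the weight as $[\z_{\varepsilon_n}]^{-(r-1)}=[\z_{\varepsilon_0}]^{-(r-1)}+([\z_{\varepsilon_n}]^{-(r-1)}-[\z_{\varepsilon_0}]^{-(r-1)})$. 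For the first piece, the fixed weight $e^{2\alpha(\xi-\tau)}[\z_{\varepsilon_0}(\xi,\vartheta_{-\tau}\omega)]^{-(r-1)}$ defines a norm equivalent to the standard $\mathrm{L}^{r+1}(\tau-m,\tau;\wi\L^{r+1})$ one, so weak lower semicontinuity yields the required $\limsup$-inequality. For the second piece, the difference of weights tends to zero uniformly in $\xi\in[\tau-m,\tau]$ because $\varepsilon_n\to\varepsilon_0$ and $\omega$ is continuous on the compact interval $[-m-\tau,-\tau]$, while $\|\v_{\varepsilon_n}\|_{\mathrm{L}^{r+1}(\tau-m,\tau;\wi\L^{r+1})}^{r+1}$ is bounded uniformly in $n$ by \eqref{ue^2}; hence this piece vanishes in the limit. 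Plugging the limits back, using the analogue of \eqref{ac13} to identify the right-hand side with $\|\tilde{\u}\|_\H^2$ up to a tail integral in the forcing, and then sending $m\to\infty$ using \eqref{forcing2} yields $\limsup_n\|\u_n\|_\H\leq\|\tilde{\u}\|_\H$, which combined with weak lower semicontinuity closes the argument.
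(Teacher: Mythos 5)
Your proposal follows essentially the same route as the paper: strict invariance of $\mathscr{A}_{\varepsilon_n}$ to pull the points back $j$ steps, the transformation \eqref{Trans2}, a diagonal weak-limit extraction, Lemma \ref{weak1} to identify the limit, and the Ball energy-equation argument to upgrade weak to strong convergence. The paper simply refers the energy-equation step back to Lemma \ref{Asymptotic_v}, whereas you additionally (and correctly) handle the $n$-dependence of the weight $[\z_{\varepsilon_n}]^{-(r-1)}$ in the Forchheimer term by splitting off the $\varepsilon_0$-weight and controlling the remainder via uniform convergence of the weights on the compact interval together with the uniform-in-$n$ bound from \eqref{ue^2} — a detail the paper glosses over.
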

	\begin{proof}
		It follows from \eqref{ab_H2} that for every $\tau\in\R$ and $\omega\in\Omega$,
		\begin{align}\label{PC1}
			\lim_{\varepsilon_n\to\varepsilon_0}\mathcal{M}_{\varepsilon_n}(\tau,\omega)=\mathcal{M}_{\varepsilon_0}(\tau,\omega).
		\end{align}
		Since $\varepsilon_n\to 0$, \eqref{PC1} implies that for every $\tau\in\R$ and $\omega\in\Omega$, there exists $\mathcal{N}_1=\mathcal{N}_1(\tau,\omega)$ such that for all $n\geq\mathcal{N}_1$,
		\begin{align}\label{PC2}
			\mathcal{M}_{\varepsilon_n}(\tau,\omega)\leq2\mathcal{M}_{\varepsilon_0}(\tau,\omega).
		\end{align}
		It is given that $\u_n\in\mathscr{A}_{\varepsilon_n}(\tau,\omega)$ and by the property of attractors we know that $\mathscr{A}_{\varepsilon_n}(\tau,\omega)\subseteq\mathcal{K}_{\varepsilon_n}(\tau,\omega),$ by \eqref{AB1} and \eqref{PC2} we obtain, for all $n\geq\mathcal{N}_1$,
		\begin{align}\label{PC3}
			\|\u_n\|^2_{\H}\leq2\mathcal{M}_{\varepsilon_0}(\tau,\omega).
		\end{align}
		It is clear that $\{\u_n\}_{n\in\N}$ is a bounded sequence in $\H$ and therefore, there exists a subsequence (for convenience, we use the same notation) and $\hat{\u}\in\H$ such that 
		\begin{align}\label{PC4}
			\u_n\xrightharpoonup{w}\hat{\u} \ \text{ in }\  \H.
		\end{align}
		In order to complete the proof, we show that the weak convergence in \eqref{PC4} is nothing but a strong convergence in $\H$. Since $\u_n\in\mathscr{A}_{\varepsilon_n}(\tau,\omega)$, by the invariance property of $\mathscr{A}_{\varepsilon_n}(\tau,\omega)$, for all $j\geq1$, there exists $\u_{n,j}\in\mathscr{A}_{\varepsilon_n}(\tau-j,\vartheta_{-j}\omega)$ such that 
		\begin{align}\label{PC5}
			\u_n=\Phi_{\varepsilon_n}(j,\tau-j,\vartheta_{-j}\omega,\u_{n,j})=\u_{\varepsilon_n}(\tau,\tau-j,\vartheta_{-\tau}\omega,\u_{n,j}).
		\end{align}
		Since $\u_{n,j}\in\mathscr{A}_{\varepsilon_n}(\tau-j,\vartheta_{-j}\omega)$ and $\mathscr{A}_{\varepsilon_n}(\tau-j,\vartheta_{-j}\omega)\subseteq\mathcal{K}_{\varepsilon_n}(\tau-j,\vartheta_{-j}\omega),$ by \eqref{AB1} and \eqref{PC2} we get that for each $j\geq1$ and $n\geq\mathcal{N}_1(\tau-j,\vartheta_{-j}\omega)$,
		\begin{align}\label{PC6}
			\|\u_{n,j}\|^2_{\H}\leq2\mathcal{M}_{\varepsilon_0}(\tau-j,\vartheta_{-j}\omega).
		\end{align}
		From \eqref{Trans2}, we infer 
		\begin{align}\label{PC7}
			\v_{\varepsilon_n}(\tau,\tau-j,\vartheta_{-\tau}\omega,\v_{n,j})=e^{-\varepsilon_n\vartheta_{-\tau}\omega(\tau)}\u_{\varepsilon_n}(\tau,\tau-j,\vartheta_{-\tau}\omega,\u_{n,j}),
		\end{align}
		where 
		\begin{align}\label{PC8}
			\v_{n,j}=e^{-\varepsilon_n\vartheta_{-\tau}\omega(\tau-j)}\u_{n,j}.
		\end{align}
		Now, \eqref{PC5} and \eqref{PC7} imply that 
		\begin{align}\label{PC9}
			\u_n=e^{\varepsilon_n\vartheta_{-\tau}\omega(\tau)}\v_{\varepsilon_n}(\tau,\tau-j,\vartheta_{-\tau}\omega,\v_{n,j}).
		\end{align}  
		By \eqref{PC6} and \eqref{PC8} we get, for $n\geq\mathcal{N}_1(\tau-j,\vartheta_{-j}\omega)$,
		\begin{align}\label{PC10}
			\|\v_{n,j}\|^2_{\H}\leq2\mathcal{M}_{\varepsilon_0}(\tau-j,\vartheta_{-j}\omega)e^{-2\varepsilon_n\vartheta_{-\tau}\omega(\tau-j)}.
		\end{align}
		Using \eqref{PC4} and \eqref{PC9} we obtain, as $n\to\infty$,
		\begin{align}\label{PC11}
			\v_{\varepsilon_n}(\tau,\tau-j,\vartheta_{-\tau}\omega,\v_{n,j}) \xrightharpoonup{w}\hat{\v} \text{ in } \H,
		\end{align}
		where $\hat{\v}=e^{-\varepsilon_0\vartheta_{-\tau}\omega(\tau)}\hat{\u}$. By \eqref{PC10}, it is obvious that for each $j\geq1$, the sequence $\{\v_{n,j}\}_{n\in\N}$ is bounded in $\H$, and hence, by a diagonal process, it is easy to find a subsequence (for convenience, keep the labeling same) such that for every $j\geq1$, there exists $\tilde{\v}_j\in\H$ such that
		\begin{align}\label{PC12}
			\v_{n,j}\xrightharpoonup{w}\tilde{\v}_j  \ \text{ in }\ \H \text{ as } n\to\infty.
		\end{align}
		By \eqref{PC12} and Lemma \ref{weak1}, we obtain, as $n\to\infty$,
		\begin{align}\label{PC13}
			\v_{\varepsilon_n}(\tau,\tau-j,\vartheta_{-\tau}\omega,\v_{n,j})\xrightharpoonup{w}\v_{\varepsilon_0}(\tau,\tau-j,\vartheta_{-\tau}\omega,\tilde{\v}_{j})\ \text{ in }\ \H,
		\end{align}
		and
		\begin{align*}
			\v_{\varepsilon_n}(\cdot,\tau-j,\vartheta_{-\tau}\omega,\v_{n,j})\xrightharpoonup{w}\v_{\varepsilon_0}(\cdot,\tau-j,\vartheta_{-\tau}\omega,\tilde{\v}_{j}),
		\end{align*}
		in  $\mathrm{L}^2(\tau-j,\tau;\V)$ and  $\mathrm{L}^{r+1}(\tau-j,\tau;\wi\L^{r+1})$. Now, by \eqref{PC11} and \eqref{PC13}, we get
		\begin{align}\label{PC15}
			\hat{\v}=\v_{\varepsilon_0}(\tau,\tau-j,\vartheta_{-\tau}\omega,\tilde{\v}_{j}).
		\end{align}
		Since we have obtained the same convergence as in Lemma \ref{Asymptotic_v}, further calculation is same as in Lemma \ref{Asymptotic_v}, and hence we omit it here. Following the same calculations as in Lemma \ref{Asymptotic_v} (after \eqref{ac10}), that is, using the idea of energy equations, we obtain
		\begin{align}
			\v_{\varepsilon_n}(\tau,\tau-j,\vartheta_{-\tau}\omega,\v_{n,j}) \to\hat{\v}\  \text{ in } \ \H,
		\end{align}
		where $\hat{\v}=e^{-\varepsilon_0\vartheta_{-\tau}\omega(\tau)}\hat{\u}$ or equivalently
		\begin{align}\label{PC16}
			\u_{\varepsilon_n}(\tau,\tau-j,\vartheta_{-\tau}\omega,\u_{n,j}) \to\hat{\u} \ \text{ in } \ \H.
		\end{align}
		From \eqref{PC5} and \eqref{PC16}, we infer that
		\begin{align}
			\u_{n} \to\hat{\u} \ \text{ in }\  \H,
		\end{align}
		which completes the proof.
	\end{proof}
	\begin{theorem}\label{Perturbation_v}
		For $0<\varepsilon\leq 1, d=2$ with $r\geq1, d=3$ with $r>3$ and $d=r=3$ with $2\beta\mu\geq1$, assume that $\f\in\mathrm{L}^2_{\mathrm{loc}}(\R;\V')$.  Let $\v_{\varepsilon}$ and $\u$ be the solutions of systems \eqref{CCBF} and \eqref{CBF}, respectively. Then, for every $\tau\in\R, \omega\in \Omega, T>0$ and $t\in[\tau,\tau+T]$,
		\begin{align}\label{P}
			\lim_{\varepsilon\to0}\|\v_{\varepsilon}(t,\tau,\omega,\v_{\varepsilon,\tau})-\u(t;\tau,\u_{\tau})\|^2_{\H}=0.
		\end{align}
	\end{theorem}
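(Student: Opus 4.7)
The plan is to set $\w_\varepsilon(t) := \v_\varepsilon(t,\tau,\omega,\v_{\varepsilon,\tau}) - \u(t;\tau,\u_\tau)$, writing $\z := \z(t,\omega) = e^{-\varepsilon\omega(t)}$, derive an integrated energy inequality for $\|\w_\varepsilon\|^2_\H$, and close by Gronwall's inequality. Subtracting \eqref{CBF} from \eqref{CCBF} and rearranging yields, in $\V'+\wi\L^{(r+1)/r}$,
\begin{align*}
\tfrac{\d\w_\varepsilon}{\d t} + \mu\A\w_\varepsilon + \alpha\w_\varepsilon + \tfrac{1}{\z}\bigl[\B(\v_\varepsilon)-\B(\u)\bigr] + \tfrac{\beta}{\z^{r-1}}\bigl[\mathcal{C}(\v_\varepsilon)-\mathcal{C}(\u)\bigr] = R_\varepsilon(t),
\end{align*}
with perturbative residual $R_\varepsilon(t) := (\z-1)\f + (1-\z^{-1})\B(\u) + \beta(1-\z^{-(r-1)})\mathcal{C}(\u)$. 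Since $\omega$ is continuous on the compact interval $[\tau,\tau+T]$, each of the scalar prefactors $|\z-1|$, $|1-\z^{-1}|$, $|1-\z^{-(r-1)}|$ vanishes uniformly on $[\tau,\tau+T]$ as $\varepsilon\downarrow 0$; this is the entire source of the convergence \eqref{P}.

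Next I would pair with $\w_\varepsilon$ in $\H$ and integrate on $[\tau,t]$. The $\mathcal{C}$-difference is nonnegative by \eqref{MO_c}; the $\B$-difference reduces via \eqref{441} to $-\langle\B(\w_\varepsilon,\w_\varepsilon),\u\rangle/\z$ and is bounded exactly as in the three cases of Lemma \ref{Continuity} (with $\u$ playing the role there played by $\v^2_\varepsilon$), consuming part of the dissipation/coercivity and leaving $\int_\tau^t \lambda(s)\|\w_\varepsilon(s)\|^2_\H \d s$ for some $\lambda\in\L^1(\tau,\tau+T)$ depending only on $\u$, $\omega$, and the fixed parameters. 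Rather than attempt a pointwise Young-type absorption of the residual—which fails cleanly in the critical case $d=r=3$ with $2\beta\mu=1$—I would apply Cauchy--Schwarz in the $\V$-$\V'$ and $\wi\L^{r+1}$-$\wi\L^{(r+1)/r}$ dualities and then in time, obtaining
\begin{align*}
\int_\tau^{\tau+T}|\langle R_\varepsilon,\w_\varepsilon\rangle| \d s &\leq C\eta_\varepsilon\Bigl(\int_\tau^{\tau+T}(\|\f\|^2_{\V'}+\|\B(\u)\|^2_{\V'}) \d s\Bigr)^{1/2}\Bigl(\int_\tau^{\tau+T}\|\w_\varepsilon\|^2_\V \d s\Bigr)^{1/2}\\
&\quad + C\eta'_\varepsilon\Bigl(\int_\tau^{\tau+T}\|\u\|^{r+1}_{\wi\L^{r+1}} \d s\Bigr)^{r/(r+1)}\Bigl(\int_\tau^{\tau+T}\|\w_\varepsilon\|^{r+1}_{\wi\L^{r+1}} \d s\Bigr)^{1/(r+1)},
\end{align*}
where $\eta_\varepsilon,\eta'_\varepsilon\to 0$ as $\varepsilon\downarrow 0$.

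The regularity $\u\in\L^2(\tau,\tau+T;\V)\cap\L^{r+1}(\tau,\tau+T;\wi\L^{r+1})$ established in \cite{MTM}, the $\V'$-boundedness of $\B(\u)$ in each admissible regime (via \eqref{b1}, \eqref{b3}, or the interpolation \eqref{3d-ab13}), and the uniform-in-$\varepsilon$ energy bound for $\v_\varepsilon$ on $[\tau,\tau+T]$ (the analogue of \eqref{ue^1}--\eqref{ue^2}) imply that each $\L^p$ integral in the display is bounded independently of $\varepsilon$; hence the residual contribution $\sigma_\varepsilon\to 0$. Gronwall's inequality applied to $\|\w_\varepsilon(t)\|^2_\H \leq \|\w_\varepsilon(\tau)\|^2_\H + \int_\tau^t\lambda(s)\|\w_\varepsilon(s)\|^2_\H \d s + \sigma_\varepsilon$ then yields \eqref{P}, provided $\|\v_{\varepsilon,\tau}-\u_\tau\|_\H\to 0$ (automatic under the natural coupling $\v_{\varepsilon,\tau}=\z(\tau,\omega)\u_\tau$ when \eqref{SCBF} is started from $\u_\tau$, since $\z(\tau,\omega)=e^{-\varepsilon\omega(\tau)}\to 1$). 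The main obstacle is precisely the critical 3D case: after the $\B$-difference estimate there is no $\|\nabla\w_\varepsilon\|^2_\H$ slack left in the dissipation, so no pointwise Young-type absorption of $(\z-1)\f$ or $(1-\z^{-1})\B(\u)$ is available; the Cauchy--Schwarz-in-time argument above sidesteps this by exploiting the uniform $\L^2(\tau,\tau+T;\V)\cap\L^{r+1}(\tau,\tau+T;\wi\L^{r+1})$-boundedness of $\v_\varepsilon$ on the fixed compact interval rather than asking the dissipation to absorb the residual pointwise in $t$.
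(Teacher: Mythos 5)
Your proposal is correct and follows the same skeleton as the paper's proof: set the difference $\y_{\varepsilon}=\v_{\varepsilon}-\u$, pair with $\y_{\varepsilon}$, kill the nonlinear-damping difference with the monotonicity \eqref{MO_c}, reduce the bilinear difference via \eqref{441} and estimate it exactly as in the three cases of Lemma \ref{Continuity}, and close with Gronwall using the continuity of $\omega$ on $[\tau,\tau+T]$ to make all prefactors $|e^{\pm\varepsilon\omega(t)}-1|$ vanish uniformly. Where you genuinely diverge is in the treatment of the residual terms $(\z-1)\f$, $(1-\z^{-1})\B(\u)$, $\beta(1-\z^{-(r-1)})\mathcal{C}(\u)$: the paper absorbs part of them pointwise in $t$ by Young's inequality into the dissipation (see \eqref{P7}, which spends $\frac{\min\{\mu,\alpha\}}{4}\|\y_{\varepsilon}\|^2_{\V}$) and dumps the rest into an additive $P(t)$ whose time integral carries the vanishing factor, whereas you integrate first and apply H\"older in the $\V$--$\V'$ and $\wi\L^{r+1}$--$\wi\L^{\frac{r+1}{r}}$ dualities against the $\varepsilon$-uniform bounds on $\int_{\tau}^{\tau+T}\|\v_{\varepsilon}\|^2_{\V}\d s$ and $\int_{\tau}^{\tau+T}\|\v_{\varepsilon}\|^{r+1}_{\wi\L^{r+1}}\d s$. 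Your variant buys something real: in the critical case $d=r=3$ with $2\beta\mu=1$ the estimate \eqref{P16} already consumes the entire $\mu\|\nabla\y_{\varepsilon}\|^2_{\H}$ budget, so the paper's pointwise absorption in \eqref{P7} has no gradient slack left to draw on, and your integrated Cauchy--Schwarz argument sidesteps this borderline bookkeeping cleanly. One caveat you should tighten: the claim that $\B(\u)\in\mathrm{L}^2(\tau,\tau+T;\V')$ does not follow from \eqref{b3} alone in 3D (that inequality only yields $\|\B(\u)\|_{\V'}\leq C\|\u\|_{\H}^{1/2}\|\nabla\u\|_{\H}^{3/2}\in\mathrm{L}^{4/3}(\tau,\tau+T)$); you need instead $\|\B(\u)\|_{\V'}\leq\|\u\|^2_{\wi\L^4}$ together with the interpolation \eqref{211} between $\H$ and $\wi\L^{r+1}$ and the fact that $\u\in\mathrm{L}^{r+1}(\tau,\tau+T;\wi\L^{r+1})$ with $r\geq3$, which is exactly where the admissible regimes enter. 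With that adjustment (and the observation, which you make, that $\v_{\varepsilon,\tau}=e^{-\varepsilon\omega(\tau)}\u_{\tau}\to\u_{\tau}$ handles the initial datum), your argument is complete.
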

	\begin{proof}
		Let $\y_{\varepsilon}=\v_{\varepsilon}-\u$. Then from \eqref{CCBF} and \eqref{CBF}, we get
		\begin{align}\label{P1}
			\frac{\d\y_{\varepsilon}(t)}{\d t}+\mu \A\y_{\varepsilon}(t)+\alpha\y_{\varepsilon}(t)&=-\frac{1}{\z(t,\omega)}\B\big(\v_{\varepsilon}(t)\big)+\B\big(\u(t)\big)-\frac{\beta}{[\z(t,\omega)]^{r-1}} \mathcal{C}\big(\v_{\varepsilon}(t)\big)\nonumber\\&\quad+\beta\mathcal{C}\big(\u(t)\big) +(\z(t,\omega)-1)\f(t).
		\end{align}
		Taking the inner product with $\y_{\varepsilon}$ to the equation \eqref{P1}, we obtain
		\begin{align}\label{P3}
			\frac{1}{2}\frac{\d}{\d t}\|\y_{\varepsilon}\|^2_{\H}&=-\mu\|\nabla\y_{\varepsilon}\|^2_{\H}-\alpha\|\y_{\varepsilon}\|^2_{\H}-e^{\varepsilon\omega(t)}b(\v_{\varepsilon},\v_{\varepsilon},\y_{\varepsilon}) +b(\u,\u,\y_{\varepsilon}) \nonumber\\&\quad-e^{\varepsilon(r-1)\omega(t)}\left\langle\mathcal{C}(\v_{\varepsilon}),\y_{\varepsilon}\right\rangle+\left\langle\mathcal{C}(\u),\y_{\varepsilon}\right\rangle+(e^{-\varepsilon\omega(t)}-1)\left\langle\f,\y_{\varepsilon}\right\rangle\nonumber\\&=-\mu\|\nabla\y_{\varepsilon}\|^2_{\H}-\alpha\|\y_{\varepsilon}\|^2_{\H}-e^{\varepsilon\omega(t)}b(\y_{\varepsilon},\u,\y_{\varepsilon}) -(e^{\varepsilon\omega(t)}-1)b(\u,\u,\y_{\varepsilon}) \nonumber\\&\quad-e^{\varepsilon(r-1)\omega(t)}\left\langle\mathcal{C}(\v_{\varepsilon})-\mathcal{C}(\u),\v_{\varepsilon}-\u\right\rangle-(e^{\varepsilon(r-1)\omega(t)}-1)\left\langle\mathcal{C}(\u),\y_{\varepsilon}\right\rangle\nonumber\\&\quad+(e^{-\varepsilon\omega(t)}-1)\left\langle\f,\y_{\varepsilon}\right\rangle.
		\end{align}
		By \eqref{MO_c}, we have 
		\begin{align}\label{P8}
			&-e^{\varepsilon(r-1)\omega(t)}\left\langle\mathcal{C}(\v_{\varepsilon})-\mathcal{C}(\u),\v_{\varepsilon}-\u\right\rangle\nonumber\\ &\leq-\frac{\beta}{2}e^{\varepsilon(r-1)\omega(t)}\||\y_{\varepsilon}||\v_{\varepsilon}|^{\frac{r-1}{2}}\|^2_{\H}-\frac{\beta}{2}e^{\varepsilon(r-1)\omega(t)}\||\y_{\varepsilon}||\u|^{\frac{r-1}{2}}\|^2_{\H}\leq0.
		\end{align}
		\vskip 2mm
		\noindent
		\textbf{Case I:} \textit{$d=2$ and $r\geq1$.} Applying \eqref{b1}, H\"older's and Young's inequalities, we estimate the right hand side terms of \eqref{P3} as follows
		\begin{align}
			\left|e^{\varepsilon\omega(t)}b(\y_{\varepsilon},\u,\y_{\varepsilon})\right|&\leq Ce^{\varepsilon\omega(t)}\|\y_{\varepsilon}\|_{\H}\|\nabla\y_{\varepsilon}\|_{\H}\|\nabla\u\|_{\H}\nonumber\\&\leq Ce^{2\varepsilon\omega(t)}\|\nabla\u\|^2_{\H}\|\y_{\varepsilon}\|^2_{\H}+\frac{\mu}{4}\|\nabla\y_{\varepsilon}\|^2_{\H},\label{P4}\\
			\left|(e^{\varepsilon\omega(t)}-1)b(\u,\u,\y_{\varepsilon})\right|&\leq C\left|e^{\varepsilon\omega(t)}-1\right|\|\u\|^{1/2}_{\H}\|\nabla\u\|^{3/2}_{\H}\|\y_{\varepsilon}\|^{1/2}_{\H}\|\nabla\y_{\varepsilon}\|^{1/2}_{\H}\nonumber\\&\leq C\|\y_{\varepsilon}\|^2_{\H}\|\nabla\y_{\varepsilon}\|^2_{\H} + C\left|e^{\varepsilon\omega(t)}-1\right|^{4/3} \|\u\|^{2/3}_{\H}\|\nabla\u\|^2_{\H}\nonumber\\&\leq C\|\nabla\v_{\varepsilon}\|^2_{\H}\|\y_{\varepsilon}\|^2_{\H}+C\|\nabla\u\|^2_{\H}\|\y_{\varepsilon}\|^2_{\H} \nonumber\\&\quad+ C|e^{\varepsilon\omega(t)}-1|^{4/3} \|\u\|^{2/3}_{\H}\|\nabla\u\|^2_{\H},\label{P5}\\
			\left|(e^{\varepsilon(r-1)\omega(t)}-1)\left\langle\mathcal{C}(\u),\y_{\varepsilon}\right\rangle\right|&\leq\left|e^{\varepsilon(r-1)\omega(t)}-1\right|\|\u\|^r_{\widetilde{\L}^{r+1}}\|\y_{\varepsilon}\|_{\widetilde{\L}^{r+1}}\nonumber\\&\leq C\left|e^{\varepsilon(r-1)\omega(t)}-1\right|\|\u\|^{r+1}_{\widetilde{\L}^{r+1}}+C\left|e^{\varepsilon(r-1)\omega(t)}-1\right|\|\y_{\varepsilon}\|^{r+1}_{\widetilde{\L}^{r+1}},\label{P6}\\
			(e^{-\varepsilon\omega(t)}-1)\left\langle\f,\y_{\varepsilon}\right\rangle&\leq\frac{1}{\min\{\mu,\alpha\}}\left|e^{-\varepsilon\omega(t)}-1\right|^2\|\f\|^2_{\V'}+\frac{\min\{\mu,\alpha\}}{4}\|\y_{\varepsilon}\|^2_{\V}.\label{P7}
		\end{align}
		Combining \eqref{P8}-\eqref{P7} and putting in \eqref{P3}, we obtain
		\begin{align}\label{P9}
			\frac{\d}{\d t}\|\y_{\varepsilon}(t)\|^2_{\H}&\leq	P_1(t)\|\y_{\varepsilon}(t)\|^2_{\H} + P_2(t),
		\end{align}
		for a.e. $t\in[\tau,\tau+T]$, where
		\begin{align*}
			P_1(t)&=	C\left[(e^{2\varepsilon\omega(t)}+1)\|\nabla\u(t)\|^2_{\H}+\|\nabla\v_{\varepsilon}(t)\|^2_{\H}\right],\\
			P_2(t)&=C|e^{\varepsilon(r-1)\omega(t)}-1|\|\u(t)\|^{r+1}_{\widetilde{\L}^{r+1}}+C|e^{\varepsilon(r-1)\omega(t)}-1|\|\v_{\varepsilon}(t)\|^{r+1}_{\widetilde{\L}^{r+1}}\nonumber\\&\quad+C|e^{\varepsilon\omega(t)}-1|^{4/3} \|\u(t)\|^{2/3}_{\H}\|\nabla\u(t)\|^2_{\H}+\frac{2|e^{-\varepsilon\omega(t)}-1|^2}{\min\{\mu,\alpha\}}\|\f(t)\|^2_{\V'}.
		\end{align*}
		Now, applying Gronwall's inequality, we obtain that, for all $t\in[\tau,\tau+T]$
		\begin{align}\label{P10}
			&\|\v_{\varepsilon}(t,\tau,\omega,\v_{\varepsilon,\tau})-\u(t;\tau,\u_{\tau})\|^2_{\H}\nonumber\\&\leq\bigg(\|e^{-\varepsilon\omega(t)}\u_0-\u_0\|^2_{\H}+C\int_{\tau}^{\tau+T}\big\{|e^{\varepsilon(r-1)\omega(s)}-1|\|\u(s)\|^{r+1}_{\widetilde{\L}^{r+1}}+|e^{\varepsilon(r-1)\omega(s)}-1|\|\v_{\varepsilon}(s)\|^{r+1}_{\widetilde{\L}^{r+1}}\nonumber\\&\qquad+|e^{\varepsilon\omega(s)}-1|^{4/3}\|\u(s)\|^{2/3}_{\H}\|\nabla\u(s)\|^2_{\H}+\frac{2|e^{-\varepsilon\omega(s)}-1|^2}{\min\{\mu,\alpha\}}\|\f(s)\|^2_{\V'}\big\}\d s\bigg)\nonumber\\&\qquad\times e^{C\int_{\tau}^{\tau+T}\left\{(e^{2\varepsilon\omega(s)}+1)\|\nabla\u(s)\|^2_{\H}+\|\nabla\v_{\varepsilon}(s)\|^2_{\H}\right\}\d s}\nonumber\\&\leq\bigg(\|e^{-\varepsilon\omega(t)}\u_0-\u_0\|^2_{\H}+C\sup_{s\in[\tau,\tau+T]}|e^{\varepsilon(r-1)\omega(s)}-1|\int_{\tau}^{\tau+T}\left[\|\u(s)\|^{r+1}_{\widetilde{\L}^{r+1}}+\|\v_{\varepsilon}(s)\|^{r+1}_{\widetilde{\L}^{r+1}}\right]\d s\nonumber\\&\qquad+C\sup_{s\in[\tau,\tau+T]}|e^{\varepsilon\omega(s)}-1|^{4/3}\sup_{s\in[\tau,\tau+T]}\|\u(s)\|^{2/3}_{\H}\int_{\tau}^{\tau+T}\|\nabla\u(s)\|^2_{\H}\d s\nonumber\\&\qquad+\frac{2\sup\limits_{s\in[\tau,\tau+T]}|e^{-\varepsilon\omega(s)}-1|^2}{\min\{\mu,\alpha\}}\int_{\tau}^{\tau+T}\|\f(s)\|^2_{\V'}\d s\bigg)\nonumber\\&\qquad\times e^{C\sup\limits_{s\in[\tau,\tau+T]}\left|e^{2\varepsilon\omega(s)}+1\right|\int_{\tau}^{\tau+T}\|\nabla\u(s)\|^2_{\H}\d s+C\int_{\tau}^{\tau+T}\|\nabla\v_{\varepsilon}(s)\|^2_{\H}\d s},
		\end{align}
		where we have used the fact $\omega(t)$ is continuous on $\R$. Since $$\u,\v_{\varepsilon}\in\mathrm{C}([\tau,\tau+T];\H)\cap\mathrm{L}^2(\tau,\tau+T;\V)\cap\mathrm{L}^{r+1}(\tau,\tau+T;\widetilde{\L}^{r+1}) \text{ and } \f\in\mathrm{L}^2_{\mathrm{loc}}(\R;\V'),$$ we can take limit $\varepsilon\to0$ in \eqref{P10}, which completes the proof.
		\vskip 2mm
		\noindent
		\textbf{Case II:} \textit{$d=3$ and $r>3$.} Similar calculation as is \eqref{3d-ab14} and \eqref{3d-ab13}  imply
		\begin{align}\label{P11}
			\left|e^{\varepsilon\omega(t)}b(\y_{\varepsilon},\u,\y_{\varepsilon})\right|&=\left|e^{\varepsilon\omega(t)}b(\y_{\varepsilon},\y_{\varepsilon},\u)\right|\nonumber\\&\leq\frac{\mu}{4}\|\nabla\y_{\varepsilon}\|_{\H}^2+\frac{\beta}{4}e^{\varepsilon(r-1)\omega(t)}\||\y_{\varepsilon}||\u|^{\frac{r-1}{2}}\|^2_{\H}+C\|\y_{\varepsilon}\|^2_{\H}
		\end{align}
		and 
		\begin{align}\label{P12}
			&\left|(e^{\varepsilon\omega(t)}-1)b(\u,\u,\y_{\varepsilon})\right|\nonumber\\&\quad\leq\left|1-e^{-\varepsilon\omega(t)}\right|e^{\varepsilon\omega(t)}\|\nabla\u\|_{\H}\||\y_{\varepsilon}||\u|\|_{\H}\nonumber\\&\quad\leq\frac{1}{2\beta}\left|1-e^{-\varepsilon\omega(t)}\right|^2\|\nabla\u\|^2_{\H}+\frac{\beta}{2}e^{2\varepsilon\omega(t)}\||\y_{\varepsilon}||\u|\|^2_{\H}\nonumber\\&\quad\leq\frac{1}{2\beta}\left|1-e^{-\varepsilon\omega(t)}\right|^2\|\nabla\u\|^2_{\H}+\frac{\beta}{4}e^{\varepsilon(r-1)\omega(t)}\||\y_{\varepsilon}||\u|^{\frac{r-1}{2}}\|^2_{\H}+C\|\y_{\varepsilon}\|^2_{\H}.
		\end{align}
		Combining \eqref{P3}-\eqref{P8}, \eqref{P6}-\eqref{P7} and \eqref{P11}-\eqref{P12}, we obtain
		\begin{align}\label{P14}
			\frac{\d}{\d t}\|\y_{\varepsilon}(t)\|^2_{\H}&\leq	C\|\y_{\varepsilon}(t)\|^2_{\H} + P(t),
		\end{align}
		for a.e. $t\in[\tau,\tau+T]$, where
		\begin{align*}
			P(t)&=C|e^{\varepsilon(r-1)\omega(t)}-1|\|\u(t)\|^{r+1}_{\widetilde{\L}^{r+1}}+C|e^{\varepsilon(r-1)\omega(t)}-1|\|\v_{\varepsilon}(t)\|^{r+1}_{\widetilde{\L}^{r+1}}\nonumber\\&\quad+\frac{1}{\beta}\left|1-e^{-\varepsilon\omega(t)}\right|^2\|\nabla\u(t)\|^2_{\H}+\frac{2|e^{-\varepsilon\omega(t)}-1|^2}{\min\{\mu,\alpha\}}\|\f(t)\|^2_{\V'}.
		\end{align*}
		Now, applying Gronwall's inequality, we get that, for all $t\in[\tau,\tau+T]$
		\begin{align}\label{P15}
			&\|\v_{\varepsilon}(t,\tau,\omega,\v_{\varepsilon,\tau})-\u(t;\tau,\u_{\tau})\|^2_{\H}\nonumber\\&\leq\bigg(\|e^{-\varepsilon\omega(t)}\u_0-\u_0\|^2_{\H}+C\sup_{s\in[\tau,\tau+T]}|e^{\varepsilon(r-1)\omega(s)}-1|\int_{\tau}^{\tau+T}\left[\|\u(s)\|^{r+1}_{\widetilde{\L}^{r+1}}+\|\v_{\varepsilon}(s)\|^{r+1}_{\widetilde{\L}^{r+1}}\right]\d s\nonumber\\&\qquad+\frac{1}{\beta}\sup_{s\in[\tau,\tau+T]}|1-e^{-\varepsilon\omega(s)}|^{2}\int_{\tau}^{\tau+T}\|\nabla\u(s)\|^2_{\H}\d s+\frac{2\sup\limits_{s\in[\tau,\tau+T]}|e^{-\varepsilon\omega(s)}-1|^2}{\min\{\mu,\alpha\}}\nonumber\\&\qquad\times\int_{\tau}^{\tau+T}\|\f(s)\|^2_{\V'}\d s\bigg) e^{CT},
		\end{align}
		where we have used the fact $\omega(t)$ is continuous on $\R$. Since $$\u,\v_{\varepsilon}\in\mathrm{L}^2(\tau,\tau+T;\V)\cap\mathrm{L}^{r+1}(\tau,\tau+T;\widetilde{\L}^{r+1}) \text{ and } \f\in\mathrm{L}^2_{\mathrm{loc}}(\R;\V'),$$ we can take limit $\varepsilon\to0$ in \eqref{P10}, which completes the proof.
		\vskip 2mm
		\noindent
		\textbf{Case III:} \textit{When $d=r=3$ with $2\beta\mu\geq1$.} Applying H\"older's and Young's inequalities, we obtain
		\begin{align}
			\left|e^{\varepsilon\omega(t)}b(\y_{\varepsilon},\u,\y_{\varepsilon})\right|&=\left|e^{\varepsilon\omega(t)}b(\y_{\varepsilon},\y_{\varepsilon},\v_{\varepsilon})\right|\leq\frac{1}{2\beta}\|\nabla\y_{\varepsilon}\|_{\H}^2+\frac{\beta}{2}e^{2\varepsilon\omega(t)}\||\y_{\varepsilon}||\v_{\varepsilon}|\|^2_{\H}\label{P16}\\
			\left|(e^{\varepsilon\omega(t)}-1)b(\u,\u,\y_{\varepsilon})\right|&\leq\left|1-e^{-\varepsilon\omega(t)}\right|e^{\varepsilon\omega(t)}\|\nabla\u\|_{\H}\||\y_{\varepsilon}||\u|\|_{\H}\nonumber\\&\leq\frac{1}{2\beta}\left|1-e^{-\varepsilon\omega(t)}\right|^2\|\nabla\u\|^2_{\H}+\frac{\beta}{2}e^{2\varepsilon\omega(t)}\|\|\y_{\varepsilon}||\u|\|^2_{\H}.\label{P17}
		\end{align}
		Combining \eqref{P3}-\eqref{P8}, \eqref{P6}-\eqref{P7} and \eqref{P16}-\eqref{P17}, we obtain
		\begin{align}\label{P19}
			\frac{\d}{\d t}\|\y_{\varepsilon}(t)\|^2_{\H}&\leq	C\|\y_{\varepsilon}(t)\|^2_{\H} + P(t),
		\end{align}
		for a.e. $t\in[\tau,\tau+T]$, where $P(t)$ is same as defined in \eqref{P14} with $r=3$. Since $\u,\v_{\varepsilon}\in\mathrm{L}^2(\tau,\tau+T;\V)\cap\mathrm{L}^{4}(\tau,\tau+T;\widetilde{\L}^{4}) \text{ and } \f\in\mathrm{L}^2_{\mathrm{loc}}(\R;\V'),$ arguing similarly as in the for case $d=3$ with $r>3$, one can complete the proof.
	\end{proof}
	The following result is an immediate consequence of  Theorem \ref{Perturbation_v}.
	\begin{corollary}\label{Perturbation_u}
		For $0<\varepsilon\leq 1, d=2$ with $r\geq1, d=3$ with $r>3$ and $d=r=3$ with $2\beta\mu\geq1$, assume that $\f\in\mathrm{L}^2_{\mathrm{loc}}(\R;\H)$.  Let $\u_{\varepsilon}$ and $\u$ be the solutions of systems \eqref{SCBF} and \eqref{CBF}, respectively. Then, for every $\tau\in\R, \omega\in \Omega, T>0$ and $t\in[\tau,\tau+T]$,
		\begin{align}\label{P*}
			\lim_{\varepsilon\to0}\|\u_{\varepsilon}(t,\tau,\omega,\u_{\varepsilon,\tau})-\u(t;\tau,\u_{\tau})\|_{\H}=0.
		\end{align}
	\end{corollary}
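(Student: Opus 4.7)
The plan is to deduce this corollary directly from Theorem \ref{Perturbation_v} by inverting the Ornstein--Uhlenbeck type transformation \eqref{Trans2}. Recall that $\v_{\varepsilon}(t,\tau,\omega,\v_{\varepsilon,\tau})=\z(t,\omega)\u_{\varepsilon}(t,\tau,\omega,\u_{\varepsilon,\tau})$ with $\z(t,\omega)=e^{-\varepsilon\omega(t)}$ and $\v_{\varepsilon,\tau}=\z(\tau,\omega)\u_{\varepsilon,\tau}$. In particular, assuming $\u_{\varepsilon,\tau}\to \u_{\tau}$ in $\H$ as $\varepsilon\to 0$ (in the intended reading $\u_{\varepsilon,\tau}=\u_{\tau}$), we have $\v_{\varepsilon,\tau}=e^{-\varepsilon\omega(\tau)}\u_{\varepsilon,\tau}\to\u_{\tau}$ in $\H$, which is exactly the initial condition used for the deterministic solution $\u(t;\tau,\u_{\tau})$ in Theorem \ref{Perturbation_v}.

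Next, I would use the identity $\u_{\varepsilon}(t,\tau,\omega,\u_{\varepsilon,\tau})=e^{\varepsilon\omega(t)}\v_{\varepsilon}(t,\tau,\omega,\v_{\varepsilon,\tau})$ together with the triangle inequality to split
\begin{align*}
\|\u_{\varepsilon}(t,\tau,\omega,\u_{\varepsilon,\tau})-\u(t;\tau,\u_{\tau})\|_{\H}
&\leq e^{\varepsilon\omega(t)}\,\|\v_{\varepsilon}(t,\tau,\omega,\v_{\varepsilon,\tau})-\u(t;\tau,\u_{\tau})\|_{\H}\\
&\quad+\bigl|e^{\varepsilon\omega(t)}-1\bigr|\,\|\u(t;\tau,\u_{\tau})\|_{\H}.
\end{align*}
Since $\omega\in\C(\R;\R)$ and $t\in[\tau,\tau+T]$ is in a compact set, the prefactor $e^{\varepsilon\omega(t)}$ stays bounded as $\varepsilon\to 0$, while $|e^{\varepsilon\omega(t)}-1|\to 0$. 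The deterministic solution $\u(t;\tau,\u_{\tau})\in\mathrm{C}([\tau,\tau+T];\H)$ is independent of $\varepsilon$, hence $\|\u(t;\tau,\u_{\tau})\|_{\H}$ is a fixed finite constant (in fact bounded on $[\tau,\tau+T]$). Applying Theorem \ref{Perturbation_v} to the first term on the right-hand side, we see that it tends to zero as $\varepsilon\to 0$, and the second term obviously vanishes. This yields \eqref{P*}.

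There is essentially no obstacle in this argument since all the heavy lifting (the energy estimate, the Gronwall-type control of $\|\v_{\varepsilon}-\u\|_{\H}$ using the uniform $\mathrm{L}^2(\tau,\tau+T;\V)\cap\mathrm{L}^{r+1}(\tau,\tau+T;\wi\L^{r+1})$ bounds on both solutions, and the regime-dependent handling of the bilinear term $\B$) has already been carried out in Theorem \ref{Perturbation_v}. The only minor point worth checking is that the assumed regularity $\f\in\mathrm{L}^2_{\mathrm{loc}}(\R;\H)$ in the corollary is at least as strong as the hypothesis $\f\in\mathrm{L}^2_{\mathrm{loc}}(\R;\V')$ needed in Theorem \ref{Perturbation_v}, which is immediate from the continuous embedding $\H\equiv\H'\hookrightarrow\V'$; thus the hypothesis on $\f$ in the corollary implies the one in the cited theorem and the conclusion follows verbatim.
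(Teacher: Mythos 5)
Your proof is correct and follows essentially the same route as the paper: invert the transformation \eqref{Trans2}, split by the triangle inequality, and invoke Theorem \ref{Perturbation_v}. The only (cosmetic) difference is the arrangement of the splitting — the paper writes $\u_{\varepsilon}-\u=(e^{\varepsilon\omega(t)}-1)\v_{\varepsilon}+(\v_{\varepsilon}-\u)$ so the small factor multiplies $\|\v_{\varepsilon}\|_{\H}$ (bounded since $\v_{\varepsilon}\to\u$), whereas you attach it to $\|\u\|_{\H}$, which is marginally cleaner since that norm is independent of $\varepsilon$; both versions close the argument.
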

	\begin{proof}
		The estimate 
		\begin{align*}
			&\lim_{\varepsilon\to0}\|\u_{\varepsilon}(t,\tau,\omega,\u_{\varepsilon,\tau})-\u(t;\tau,\u_{\tau})\|_{\H}\nonumber\\&\quad\leq\lim_{\varepsilon\to0}\left|e^{\varepsilon\omega(t)}-1\right|\|\v_{\varepsilon}(t,\tau,\omega,\u_{\varepsilon,\tau})\|_{\H}+\lim_{\varepsilon\to0}\|\v_{\varepsilon}(t,\tau,\omega,\u_{\varepsilon,\tau})-\u(t;\tau,\u_{\tau})\|^2_{\H},
		\end{align*}
		concludes the proof using Theorem \ref{Perturbation_v}.
	\end{proof}
	We are now ready to prove the main result of this section, that is, upper semicontinuity of random pullback attractors for the system \eqref{SCBF}. The concept of upper semicontinuity of non-compact non-autonomous random dynamical system was introduced in \cite{non-autoUpperWang} (see Theorem 3.2, \cite{non-autoUpperWang}). 
	\begin{theorem}\label{Main_T3}
		For $0<\varepsilon\leq 1, d=2$ with $r\geq1, d=3$ with $r>3$ and $d=r=3$ with $2\beta\mu\geq1$, assume that $\f\in\mathrm{L}^2_{\mathrm{loc}}(\R;\V')$ and \eqref{forcing2} hold. Then for every $\omega\in \Omega$ and $\tau\in\R$,
		\begin{align}\label{U-SC}
			\lim_{\varepsilon\to0}\emph{dist}_{\H}\left(\mathscr{A}_{\varepsilon}(\tau,\omega),\mathscr{A}(\tau)\right)=0.
		\end{align}
	\end{theorem}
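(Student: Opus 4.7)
The plan is to verify the hypotheses of Theorem 3.2 of \cite{non-autoUpperWang}, which gives the upper semicontinuity of random pullback attractors for non-autonomous, non-compact random dynamical systems towards a deterministic global pullback attractor. Under that abstract framework, three conditions must be checked: convergence of the perturbed cocycles to the limiting deterministic cocycle, uniform boundedness of the family of absorbing sets (and hence of the random attractors) as $\varepsilon\to 0^+$ for fixed $(\tau,\omega)$, and a pre-compactness property of sequences $\{\u_n\}$ with $\u_n\in\mathscr{A}_{\varepsilon_n}(\tau,\omega)$ whenever $\varepsilon_n\to 0$. The existence of the deterministic global pullback attractor $\mathscr{A}$ for $\Phi_0$ is already guaranteed by Theorem \ref{Main_T1}, and the existence of $\mathscr{A}_{\varepsilon}$ by Theorem \ref{PullbackAttractor}.

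The first condition, namely that $\Phi_{\varepsilon}(t,\tau,\omega,\u_{\varepsilon,\tau})\to\Phi_0(t,\tau,\u_{\tau})$ in $\H$ as $\varepsilon\to 0$ whenever $\u_{\varepsilon,\tau}\to\u_{\tau}$ in $\H$, follows directly from Corollary \ref{Perturbation_u} (and a routine triangle-inequality argument to replace the fixed initial datum $\u_{\tau}$ by the convergent sequence $\u_{\varepsilon,\tau}$, using the Gronwall-type estimate from Theorem \ref{Perturbation_v} with $\u_{\varepsilon,\tau}$ in place of $\u_{\tau}$). The second condition is the content of the inclusion \eqref{K1}: for every $\tau\in\R$ and $\omega\in\Omega$,
\begin{align*}
\bigcup_{0<\varepsilon\leq 1}\mathscr{A}_{\varepsilon}(\tau,\omega) \subseteq \bigcup_{0<\varepsilon\leq 1}\mathcal{K}_{\varepsilon}(\tau,\omega)\subseteq\mathcal{B}(\tau,\omega),
\end{align*}
where $\mathcal{B}(\tau,\omega)$ defined in \eqref{ab_H3}--\eqref{ab_H4} is a bounded subset of $\H$ whose radius $\mathcal{R}(\tau,\omega)$ depends only on $(\tau,\omega)$ and not on $\varepsilon$. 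The finiteness of $\mathcal{R}(\tau,\omega)$ uses \eqref{forcing2} together with the temperedness of $\omega\mapsto e^{2|\omega(\xi-\tau)|}$ given by \eqref{omega}, exactly as in the bound \eqref{ue4}.

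The third condition, the pre-compactness of $\{\u_n\}$ with $\u_n\in\mathscr{A}_{\varepsilon_n}(\tau,\omega)$ and $\varepsilon_n\to 0$, is precisely Lemma \ref{precompact}. This is the most substantial ingredient and was already carried out via the energy-equation method of \cite{Ball}: one pulls back along the cocycle by an arbitrary integer $j$, uses the invariance $\u_n=\Phi_{\varepsilon_n}(j,\tau-j,\vartheta_{-j}\omega,\u_{n,j})$, passes to a diagonal subsequence to get weak limits $\tilde{\v}_j$ of the conjugated initial data, and then upgrades weak to strong convergence through the quadratic energy identity combined with Lemma \ref{weak1}.

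With all three abstract hypotheses in place, Theorem 3.2 of \cite{non-autoUpperWang} yields \eqref{U-SC}. The only nontrivial work is bookkeeping: one should check carefully that the ``uniform tail'' role played in \cite{non-autoUpperWang} by a single pullback absorbing set is played here by the family $\{\mathcal{K}_{\varepsilon}(\tau,\omega)\}_{0<\varepsilon\leq 1}$ together with its tempered envelope $\mathcal{B}(\tau,\omega)$, so that $\mathcal{B}\in\mathfrak{D}$ (which again reduces to \eqref{omega} and \eqref{forcing2}). The anticipated main obstacle is not analytical but notational: ensuring that the abstract measurability, temperedness, and convergence conditions from \cite{non-autoUpperWang} are translated correctly for the parametric space $\Omega_1=\R$ (for the forcing) combined with $\Omega_2=\Omega$ (for the noise), since Theorem \ref{Perturbation_v} and Lemma \ref{precompact} supply the estimates pointwise in $(\tau,\omega)$ as required.
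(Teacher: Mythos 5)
Your proposal is correct and follows essentially the same route as the paper: it verifies the three hypotheses of Theorem 3.2 of \cite{non-autoUpperWang} using Corollary \ref{Perturbation_u} for the convergence of cocycles, the uniform bound \eqref{K1} (together with \eqref{U-SC1}) for the absorbing sets, and Lemma \ref{precompact} for pre-compactness. Your explicit remark that Corollary \ref{Perturbation_u} must be combined with a triangle-inequality argument to handle a convergent sequence of initial data is a point the paper glosses over, but otherwise the two arguments coincide.
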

	\begin{proof}
		Let $\mathcal{K}_{\varepsilon}(\tau,\omega)$ and $\mathcal{K}_{0}(\tau)$ be the families of subsets of $\H$ given by \eqref{ab_H} and \eqref{ab_H1}, respectively. Also, let $\mathcal{K}_{\varepsilon}$ be a $\mathfrak{D}$-pullback absorbing set of $\Phi_{\varepsilon}$ and $\mathcal{K}_{0}$ be a $\mathfrak{D}_0$-pullback absorbing set of $\Phi_0$ in $\H$. By \eqref{AB1} and \eqref{ab_H2}, we obtain for every  $\tau\in\R$ and $\omega\in\Omega$,
		\begin{align}\label{U-SC1}
			\limsup_{\varepsilon\to0}\|\mathcal{K}_{\varepsilon}(\tau,\omega)\|^2_{\H}\leq\limsup_{\varepsilon\to0}\mathcal{M}_{\varepsilon}(\tau,\omega)=\mathcal{M}_0(\tau).
		\end{align}
		Consider a sequence $\varepsilon_n\to0$ and $\u_{0,n}\to\u_0$ in $\H$. By Corollary \ref{Perturbation_u} we find that for every $t\geq0, \tau\in\R$ and $\omega\in\Omega$,
		\begin{align}\label{U-SC2}
			\Phi_{\varepsilon}(t,\tau,\omega,\u_{0,n}) \to \Phi_0(t;\tau,\u_{0}) \ \text{ in } \ \H.
		\end{align}
		Hence \eqref{K1}, \eqref{U-SC1}, \eqref{U-SC2} and Lemma \ref{precompact} along with Theorem 3.2 in \cite{non-autoUpperWang} complete the proof.
	\end{proof}

	\begin{appendix}
		\renewcommand{\thesection}{\Alph{section}}
		\numberwithin{equation}{section}
		\section{Uniform tail estimate for the solution of SCBF equations}\label{sec7}\setcounter{equation}{0}
		One can obtain the results obtained in the previous sections using 	the uniform tail estimate for the solution of SCBF equations also. We omitted this method, since we have a restriction on $r\geq 2$ in two dimensions and we need $\f\in\mathrm{L}^2_{\mathrm{loc}}(\R;\H)$.  To prove the uniform tail estimate for the solution of the system \eqref{SCBF}, we assume that the external forcing term $\f\in\mathrm{L}^2_{\mathrm{loc}}(\R;\H)$ and there exists a number $\delta\in[0,\alpha)$ such that
		\begin{align}\label{forcing3}
			\int_{-\infty}^{\tau} e^{\delta\xi}\|\f(\cdot,\xi)\|^2_{\H}\d \xi<\infty, \ \text{ for all } \ \tau\in\R.
		\end{align}
		Moreover, \eqref{forcing3} implies that 
		\begin{align}\label{forcing4}
			\lim_{k\to\infty}\int_{-\infty}^{0}\int\limits_{|x|\geq k} e^{\delta\xi}|\f(x,\xi+\tau)|^2\d x\d \xi=0, \ \text{ for all }\ \tau\in\R.
		\end{align}
		\begin{lemma}\label{largeradius}
			For $0<\varepsilon\leq 1, d=2$ with $r\geq2, \ d=3$ with $r>3$ and $d=r=3$ with $2\beta\mu\geq1$, assume that $\f\in\mathrm{L}^2_{\mathrm{loc}}(\R;\H)$ and satisfies \eqref{forcing3}. Then, for any $\v_{\varepsilon,\tau-t}\in D(\tau-t,\vartheta_{-t}\omega),$ where $D=\{D(\tau,\omega):\tau\in\R, \omega\in \Omega\}\in\mathfrak{D}$, and for any $\eta>0$ and $\mathbb{P}$-a.e. $\omega\in \Omega$, there exist $\mathscr{T}^*=\mathscr{T}^*(\tau,\omega,D,\eta)\geq 1$ and $P^*=P^*(\tau,\omega,\eta)>0$ such that for all $t\geq \mathscr{T}^*$, the solution of \eqref{CCBF}  with $\omega$ replaced by $\vartheta_{-\tau}\omega$ satisfy
			\begin{align}\label{ep}
				\int_{|x|\geq P^*}|\v_{\varepsilon}(\tau,\tau-t,\vartheta_{-\tau}\omega,\v_{\varepsilon,\tau-t}) |^2\d x\leq \eta,
			\end{align}
			where $\mathscr{T}^*(\tau,\omega,D,\eta)$ and $P^*(\tau,\omega,\eta)$ are independent of $\varepsilon$.
		\end{lemma}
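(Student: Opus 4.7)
The strategy is the standard cut-off argument on the transformed equation \eqref{CCBF}. Choose a smooth scalar function $\rho \in \mathrm{C}^{\infty}([0,\infty);[0,1])$ with $\rho(s)=0$ for $s\leq 1$, $\rho(s)=1$ for $s\geq 2$, and $|\rho'|+|\rho''|\leq C$. For $k\geq 1$, set $\rho_k(x)=\rho(|x|^2/k^2)$, so that $\rho_k$ is supported in $\{|x|\geq k\}$, equals $1$ on $\{|x|\geq \sqrt{2}\,k\}$, and $|\nabla\rho_k|\leq C/k$, $|\Delta(\rho_k^2)|\leq C/k^2$, both concentrated on the annulus $\{k\leq |x|\leq \sqrt{2}\,k\}$. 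The plan is to derive a differential inequality for the localized energy $\int \rho_k^2 |\v_\varepsilon|^2\d x$ and exploit Lemma \ref{LemmaUe} together with the tail assumption \eqref{forcing4} to drive the right-hand side to zero.

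I would take the $\H$-inner product of \eqref{CCBF} (justified rigorously on the Galerkin approximations of the existence proof) with $\rho_k^2\v_\varepsilon$. The time derivative produces $\frac{1}{2}\frac{\d}{\d t}\int\rho_k^2|\v_\varepsilon|^2\d x$; the Stokes term yields $\mu\int\rho_k^2|\nabla\v_\varepsilon|^2\d x - \frac{\mu}{2}\int\Delta(\rho_k^2)|\v_\varepsilon|^2\d x$ after integration by parts; the Darcy term contributes $\alpha\int\rho_k^2|\v_\varepsilon|^2\d x$; the absorption term gives the nonnegative contribution $\frac{\beta}{[\z(t,\omega)]^{r-1}}\int\rho_k^2|\v_\varepsilon|^{r+1}\d x$. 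Using $\nabla\cdot\v_\varepsilon=0$, the convective term reduces to $-\frac{1}{2\z(t,\omega)}\int |\v_\varepsilon|^2(\v_\varepsilon\cdot\nabla\rho_k^2)\d x$, and the pressure recovered from the non-projected equation contributes a term of the same order, again localized on the annulus. The forcing is bounded by Young's inequality, producing $\frac{[\z(t,\omega)]^2}{\alpha}\int\rho_k^2|\f|^2\d x + \frac{\alpha}{2}\int\rho_k^2|\v_\varepsilon|^2\d x$, for which the assumption $\f\in\mathrm{L}^2_{\mathrm{loc}}(\R;\H)$ is essential.

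Every error term is supported on $\{k\leq |x|\leq \sqrt{2}\,k\}$ and carries a prefactor of $1/k$ or $1/k^2$. To treat the cubic tail $\int_{|x|\geq k}|\v_\varepsilon|^3\d x$ arising from convection, I would use the pointwise estimate $|\v|^3\leq |\v|^2+|\v|^{r+1}$, valid for $r\geq 2$; this is exactly where the two-dimensional restriction $r\geq 2$ enters. In three dimensions the analogous cubic estimate exploits $r>3$, or $r=3$ with $2\beta\mu\geq 1$, and is absorbed against $\mu\|\nabla\v_\varepsilon\|_\H^2$ and $(\beta/\z^{r-1})\|\v_\varepsilon\|_{\wi\L^{r+1}}^{r+1}$ exactly as in the proof of Lemma \ref{Continuity}. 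Combined with Lemma \ref{LemmaUe}, which bounds $\|\v_\varepsilon\|_\H^2$, $\int\|\nabla\v_\varepsilon\|_\H^2\d\xi$ and $\int[\z]^{-(r-1)}\|\v_\varepsilon\|_{\wi\L^{r+1}}^{r+1}\d\xi$ by quantities that are $\varepsilon$-independent modulo tempered factors in $\z(t,\omega)$, the annular contributions vanish uniformly in $\varepsilon\in(0,1]$ as $k\to\infty$, thanks to \eqref{omega}.

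Collecting these estimates produces a differential inequality of the form
\begin{equation*}
\frac{\d}{\d t}\int\rho_k^2|\v_\varepsilon|^2\d x+\alpha\int\rho_k^2|\v_\varepsilon|^2\d x\leq \Psi_k(t)+\frac{2[\z(t,\omega)]^2}{\min\{\mu,\alpha\}}\int_{|x|\geq k}|\f(x,t)|^2\d x,
\end{equation*}
where $\Psi_k(t)\to 0$ as $k\to\infty$ uniformly in $\varepsilon\in(0,1]$ on any compact time interval. Applying the variation of constants formula from $\tau-t$ to $\tau$ with $\omega$ replaced by $\vartheta_{-\tau}\omega$, using temperedness of the initial data so that $e^{-\alpha t}\|\v_{\varepsilon,\tau-t}\|_\H^2\to 0$, and invoking \eqref{forcing4}, one chooses $\mathscr{T}^*$ first and then $P^*=\sqrt{2}\,k^*$ large enough that each piece of the right-hand side is bounded by $\eta/3$, which yields \eqref{ep}. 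The main obstacle is the $\varepsilon$-independence of the cubic tail estimate: it forces both the two-dimensional condition $r\geq 2$ and the strengthened regularity $\f\in\mathrm{L}^2_{\mathrm{loc}}(\R;\H)$, and it is precisely this limitation that motivated the authors' preference for Ball's energy-equation method in the main body of the paper.
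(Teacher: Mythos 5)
Your overall strategy coincides with the paper's: a smooth cut-off supported outside $\{|x|\leq k\}$, a localized energy identity for \eqref{CCBF}, control of the annular error terms through the uniform estimates of Lemma \ref{LemmaUe}, and the forcing tail handled via \eqref{forcing4}, with $\mathscr{T}^*$ and $P^*$ chosen so that each piece is at most a fixed fraction of $\eta$. The structural skeleton, including the role of $r\geq 2$ in dimension two and the need for $\f\in\mathrm{L}^2_{\mathrm{loc}}(\R;\H)$, is the same.

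There is, however, a concrete gap in your treatment of the convective term. The cut-off produces the annular quantity $\frac{1}{\z(t,\omega)}\frac{C}{k}\int_{k\leq|x|\leq\sqrt2 k}|\v_{\varepsilon}|^{3}\d x$, and your pointwise bound $|\v|^{3}\leq|\v|^{2}+|\v|^{r+1}$ turns this into $\frac{C}{k\z(t,\omega)}\bigl(\|\v_{\varepsilon}\|_{\H}^{2}+\|\v_{\varepsilon}\|_{\wi\L^{r+1}}^{r+1}\bigr)$. Neither of these is a quantity that Lemma \ref{LemmaUe} controls after integration in time: \eqref{ue} and \eqref{ue^1} bound $e^{\alpha\xi}\|\v_{\varepsilon}\|_{\H}^{2}$ and $e^{\alpha\xi}\|\nabla\v_{\varepsilon}\|_{\H}^{2}$ \emph{without} the factor $[\z(\xi,\vartheta_{-\tau}\omega)]^{-1}=e^{\varepsilon\tilde\omega(\xi)}$, and \eqref{ue^2} bounds the $\wi\L^{r+1}$-integral with the weight $[\z]^{-(r-1)}$, not $[\z]^{-1}$; since the time integral runs over $(\tau-t,\tau)$ with $t\to\infty$, the stray factor $e^{\varepsilon\tilde\omega(\xi)}$ cannot simply be absorbed into a constant. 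The paper avoids this by interpolating $\|\v_{\varepsilon}\|_{\wi\L^{3}}^{3}\leq\|\v_{\varepsilon}\|_{\H}^{2(r-2)/(r-1)}\|\v_{\varepsilon}\|_{\wi\L^{r+1}}^{(r+1)/(r-1)}$ (Lemma \ref{Interpolation}, which is where $r\geq2$ enters) and then applying Young's inequality with exponents $\frac{r-1}{r-2}$ and $r-1$ so that the full weight $[\z]^{-1}$ lands on the second factor raised to the power $r-1$; the result is exactly $\frac{C}{k}\bigl(\|\v_{\varepsilon}\|_{\H}^{2}+[\z(t,\omega)]^{-(r-1)}\|\v_{\varepsilon}\|_{\wi\L^{r+1}}^{r+1}\bigr)$, i.e.\ precisely the two quantities estimated in \eqref{ue} and \eqref{ue^2}. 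Your version could presumably be repaired using the sublinear growth \eqref{omega} of $\tilde\omega$, but as written the weights do not match the available a priori bounds. A second, minor point: the separate three-dimensional absorption argument you import from Lemma \ref{Continuity} is unnecessary here, since the interpolation step above covers every case of the lemma at once (all admissible $r$ satisfy $r\geq2$), and no dissipation from $\mu\|\nabla\v_{\varepsilon}\|_{\H}^{2}$ needs to be spent on the convective annulus term.
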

		\begin{proof}
			Let $\upxi$ be a smooth function such that $0\leq\upxi(s)\leq 1$ for $s\in\R^+$ and 
			\begin{align}\label{xi}
				\upxi(s)=\begin{cases*}
					0,\quad \text{ for }0\leq s\leq 1,\\
					1, \quad\text{ for } s\geq2 .
				\end{cases*}
			\end{align}
			Then, there exists a positive constant $C$ such that $|\upxi^{'}(s)|\leq C$ for all $s\in\R^+$. Taking the inner product of first equation of \eqref{CCBF} with $\upxi\left(\frac{|x|^2}{k^2}\right)\v_{\varepsilon}$ in $\H$, we have
			\begin{align}\label{ep1}
				&\frac{1}{2} \frac{\d}{\d t} \int_{\R^d}\upxi\left(\frac{|x|^2}{k^2}\right)|\v_{\varepsilon}|^2\d x \nonumber\\&= -\mu \int_{\R^d}(\A\v_{\varepsilon}) \upxi\left(\frac{|x|^2}{k^2}\right) \v_{\varepsilon} \d x-\alpha \int_{\R^d}\upxi\left(\frac{|x|^2}{k^2}\right)|\v_{\varepsilon}|^2\d x-\frac{1}{\z(t,\omega)}b(\v_{\varepsilon},\v_{\varepsilon},\upxi\left(\frac{|x|^2}{k^2}\right)\v_{\varepsilon})\nonumber\\&\quad-\frac{\beta}{[\z(t,\omega)]^{r-1}} \int_{\R^d}\upxi\left(\frac{|x|^2}{k^2}\right)|\v_{\varepsilon}|^{r+1}\d x+\z(t,\omega) \int_{\R^d}\f\upxi\left(\frac{|x|^2}{k^2}\right)\v_{\varepsilon}\d x.
			\end{align}
			Now, we estimate each term on the right hand side of \eqref{ep1}. Applying the integration by parts, we obtain
			\begin{align}\label{ep2}
				-&\mu \int_{\R^d}(\A\v_{\varepsilon}) \upxi\left(\frac{|x|^2}{k^2}\right) \v_{\varepsilon} \d x= -\mu \int_{\R^d}|\nabla\v_{\varepsilon}|^2 \upxi\left(\frac{|x|^2}{k^2}\right)  \d x -\mu \int_{\R^d}\v_{\varepsilon} \upxi^{'}\left(\frac{|x|^2}{k^2}\right)\frac{2x}{k^2}\cdot\nabla \v_{\varepsilon} \d x,
			\end{align}
			and
			\begin{align}\label{ep2*}
				-\mu \int_{\R^d}\v_{\varepsilon} \upxi^{'}\left(\frac{|x|^2}{k^2}\right)\frac{2x}{k^2}\cdot\nabla \v_{\varepsilon} \d x&=  -\mu \int\limits_{k\leq|x|\leq \sqrt{2}k}\v_{\varepsilon} \upxi^{'}\left(\frac{|x|^2}{k^2}\right)\frac{2x}{k^2}\cdot\nabla \v_{\varepsilon} \d x\nonumber\\&\leq  \frac{2\sqrt{2}\mu}{k} \int\limits_{k\leq|x|\leq \sqrt{2}k}\left|\v_{\varepsilon}\right| \left|\upxi^{'}\left(\frac{|x|^2}{k^2}\right)\right|\left|\nabla \v_{\varepsilon}\right| \d x\nonumber\\&\leq \frac{C}{k} \int_{\R^d}\left|\v_{\varepsilon}\right| \left|\nabla \v_{\varepsilon}\right| \d x \leq \frac{C}{k} \left(\|\v_{\varepsilon}\|^2_{\H}+\|\nabla\v_{\varepsilon}\|^2_{\H}\right).
			\end{align}
			Also, for $r\geq2$, we obtain
			\begin{align}\label{ep3}
				-\frac{1}{\z(t,\omega)}b(\v_{\varepsilon},\v_{\varepsilon},\upxi\left(\frac{|x|^2}{k^2}\right)\v_{\varepsilon})&=\frac{1}{\z(t,\omega)}\int_{\R^d} \upxi^{'}\left(\frac{|x|^2}{k^2}\right)\frac{x}{k^2}\cdot\v_{\varepsilon} |\v_{\varepsilon}|^2 \d x\nonumber\\&= \frac{1}{\z(t,\omega)} \int\limits_{k\leq|x|\leq \sqrt{2}k} \upxi^{'}\left(\frac{|x|^2}{k^2}\right)\frac{x}{k^2}\cdot\v_{\varepsilon} |\v_{\varepsilon}|^2 \d x\nonumber\\&\leq \frac{1}{\z(t,\omega)}\frac{\sqrt{2}}{k} \int\limits_{k\leq|x|\leq \sqrt{2}k} \left|\upxi^{'}\left(\frac{|x|^2}{k^2}\right)\right| |\v_{\varepsilon}|^3 \d x\nonumber\\&\leq \frac{1}{\z(t,\omega)}\frac{C}{k}\|\v_{\varepsilon}\|^3_{\wi \L^3}\leq \frac{1}{\z(t,\omega)}\frac{C}{k}\|\v_{\varepsilon}\|^{\frac{2(r-2)}{r-1}}_{\H}\|\v_{\varepsilon}\|^{\frac{r+1}{r-1}}_{\wi \L^{r+1}}\nonumber\\&\leq \frac{C}{k}\left(\|\v_{\varepsilon}\|^2_{\H}+\frac{1}{[\z(t,\omega)]^{r-1}}\|\v_{\varepsilon}\|^{r+1}_{\wi \L^{r+1}}\right),
			\end{align}
			where we have used interpolation (Lemma \ref{Interpolation}) and Young's inequalities. Now, we estimate the last term of \eqref{ep1} as follows
			\begin{align}\label{ep4}
				&	\z(t,\omega) \int_{\R^d}\f\upxi\left(\frac{|x|^2}{k^2}\right)\v_{\varepsilon} \d x\leq \frac{\alpha}{2} \int_{\R^d}\upxi\left(\frac{|x|^2}{k^2}\right)|\v_{\varepsilon}|^2\d x +\frac{[\z(t,\omega)]^2}{2\alpha} \int_{\R^d}\upxi\left(\frac{|x|^2}{k^2}\right)|\f|^2\d x.
			\end{align}  
			Making use of \eqref{ep2}-\eqref{ep4} in \eqref{ep1}, we get
			\begin{align}\label{ep5}
				\frac{\d}{\d t} \int_{\R^d}\upxi\left(\frac{|x|^2}{k^2}\right)|\v_{\varepsilon}|^2\d x &\leq -\alpha \int_{\R^d}\upxi\left(\frac{|x|^2}{k^2}\right)|\v_{\varepsilon}|^2\d x+\frac{C}{k} \left(\|\v_{\varepsilon}\|^2_{\H}+\|\nabla\v_{\varepsilon}\|^2_{\H}\right)\nonumber\\&\quad+\frac{C}{k[\z(t,\omega)]^{r-1}}\|\v_{\varepsilon}\|^{r+1}_{\wi \L^{r+1}}+\frac{[\z(t,\omega)]^2}{\alpha} \int_{\R^d}\upxi\left(\frac{|x|^2}{k^2}\right)|\f|^2\d x.
			\end{align}
			Making use of Gronwall's inequality to the above equation \eqref{ep5} on $(\tau-t,\tau)$ and replacing $\omega$ by $\vartheta_{-\tau}\omega$, we find that, for $\tau\in\R, t\geq 0$ and $\omega\in \Omega$,
			\begin{align}\label{ep6}
				&\int_{\R^d}\upxi\left(\frac{|x|^2}{k^2}\right)|\v_{\varepsilon}(\tau,\tau-t,\vartheta_{-\tau}\omega,\v_{\varepsilon,\tau-t})|^2\d x \leq e^{-\alpha t}\int_{\R^d}\upxi\left(\frac{|x|^2}{k^2}\right)|\v_{\varepsilon,\tau-t}|^2\d x\nonumber\\&\quad+\frac{C}{k}\int_{\tau-t}^{\tau}e^{\alpha(s-\tau)} \left(\|\v_{\varepsilon}(s,\tau-t,\vartheta_{-\tau}\omega,\v_{\varepsilon,\tau-t})\|^2_{\H}+\|\nabla\v_{\varepsilon}(s,\tau-t,\vartheta_{-\tau}\omega,\v_{\varepsilon,\tau-t})\|^2_{\H}\right)\d s\nonumber\\&\quad+\frac{C}{k}\int_{\tau-t}^{\tau}\frac{e^{\alpha (s-\tau)}}{[\z(s,\vartheta_{-\tau}\omega)]^{r-1}}\|\v_{\varepsilon}(s,\tau-t,\vartheta_{-\tau}\omega,\v_{\varepsilon,\tau-t})\|^{r+1}_{\wi\L^{r+1}}\d s\nonumber\\&\quad+\frac{1}{\alpha}\int_{\tau-t}^{\tau}e^{\alpha (s-\tau)}[\z(s,\vartheta_{-\tau}\omega)]^2 \int_{\R^d}\upxi\left(\frac{|x|^2}{k^2}\right)|\f|^2\d x\d s.
			\end{align}
			From \eqref{initial_data}, it easy to obtain that, given $\eta>0,$ there exists $T_1=T_1(\tau,\omega,D,\eta)\geq 1$, independent of $\varepsilon$, such that  for all $t\geq T_1,$
			\begin{align}\label{ep7}
				e^{-\alpha t}\int_{\R^d}\upxi\left(\frac{|x|^2}{k^2}\right)|\v_{\varepsilon,\tau-t}|^2\d x\leq\frac{\eta}{5}.
			\end{align}
			Further, using \eqref{ue}, we get that
			\begin{align}\label{ep8}
				&\frac{C}{k}\int_{\tau-t}^{\tau}e^{\alpha (s-\tau)} \|\v_{\varepsilon}(s,\tau-t,\vartheta_{-\tau}\omega,\v_{\varepsilon,\tau-t})\|^2_{\H}\d s\nonumber\\&\leq\frac{C}{k}te^{-\alpha t} \|\v_{\varepsilon,\tau-t}\|^2_{\H}+ \frac{C}{k}\int_{\tau-t}^{\tau} \int_{\tau-t}^{s}e^{\alpha\xi} [\z(\xi,\vartheta_{-\tau}\omega)]^2 \|\f(\cdot,\xi)\|^2_{\H}\d \xi\d s\nonumber\\&\leq\frac{C}{k}te^{-\alpha t} \|\v_{\varepsilon,\tau-t}\|^2_{\H}+ \frac{C}{k}\int_{\tau-t}^{\tau} \int_{-t}^{s-\tau}e^{\alpha\xi} [\z(\xi+\tau,\vartheta_{-\tau}\omega)]^2 \|\f(\cdot,\xi+\tau)\|^2_{\H}\d \xi\d s\nonumber\\&\leq\frac{C}{k}te^{-\alpha t} \|\v_{\varepsilon,\tau-t}\|^2_{\H}+ \frac{C}{ k}\int_{\tau-t}^{\tau}e^{\frac{\alpha}{2}(s-\tau)} \int_{-t}^{s-\tau}e^{\frac{\alpha}{2}\xi}[\z(\xi+\tau,\vartheta_{-\tau}\omega)]^2\|\f(\cdot,\xi+\tau)\|^2_{\H}\d \xi\d s\nonumber\\&\leq\frac{C}{k}te^{-\alpha t} \|\v_{\varepsilon,\tau-t}\|^2_{\H}+ \frac{C}{ k}\int_{\tau-t}^{\tau}e^{\frac{\alpha}{2}(s-\tau)} \d s\int_{-\infty}^{0}e^{\frac{\alpha}{2}\xi}[\z(\xi+\tau,\vartheta_{-\tau}\omega)]^2\|\f(\cdot,\xi+\tau)\|^2_{\H}\d \xi\nonumber\\&\leq\frac{C}{k}te^{-\alpha t} \|\v_{\varepsilon,\tau-t}\|^2_{\H}+ \frac{C}{ k}\int_{-\infty}^{0}e^{\frac{\alpha}{2}\xi}[\z(\xi+\tau,\vartheta_{-\tau}\omega)]^2\|\f(\cdot,\xi+\tau)\|^2_{\H}\d \xi.
			\end{align}
			Since $\f\in\mathrm{L}^2_{\mathrm{loc}}(\R;\H)$ satisfying \eqref{forcing1} and $\v_{\varepsilon,\tau-t}\in D(\tau-t,\vartheta_{-t}\omega)$, there exists $T_2=T_2(\tau,\omega,D,\eta)>T_1$ and $P_1=P_1(\tau,\omega,\eta)>0,$ independent of $\varepsilon$, such that for all $t\geq T_2$ and $k\geq P_1$,
			\begin{align}\label{ep9}
				\frac{C}{k}\int_{\tau-t}^{\tau}e^{\alpha (s-\tau)} \|\v_{\varepsilon}(s,\tau-t,\vartheta_{-\tau}\omega,\v_{\varepsilon,\tau-t})\|^2_{\H}\d s\leq \frac{\eta}{5}.
			\end{align}
			From \eqref{ue^1}, we see that there exists $T_3=T_3(\tau,\omega,D,\eta)>T_1$ and $P_2=P_2(\tau,\omega,\eta)>0,$ independent of $\varepsilon$, such that for all $t\geq T_3$ and $k\geq P_2$,
			\begin{align}\label{ep10}
				\frac{C}{k}\int_{\tau-t}^{\tau}e^{\alpha (s-\tau)} \|\nabla\v_{\varepsilon}(s,\tau-t,\vartheta_{-\tau}\omega,\v_{\varepsilon,\tau-t})\|^2_{\H}\d s\leq \frac{\eta}{5}.
			\end{align}
			We find from \eqref{ue^2} that there exists $T_4=T_4(\tau,\omega,D,\eta)>T_1$ and $P_3=P_3(\tau,\omega,\eta)>0,$ independent of $\varepsilon$, such that for all $t\geq T_4$ and $k\geq P_3$,
			\begin{align}\label{ep11}
				\frac{C}{k}\int_{\tau-t}^{\tau}\frac{e^{\alpha (s-\tau)}}{[\z(s,\vartheta_{-\tau}\omega)]^{r-1}}\|\v_{\varepsilon}(s,\tau-t,\vartheta_{-\tau}\omega,\v_{\varepsilon,\tau-t})\|^{r+1}_{\wi\L^{r+1}}\d s\leq \frac{\eta}{5}.
			\end{align}
			We also get that there exists $T_5=T_5(\tau,\omega,D,\eta)>T_1$ and $P_4=P_4(\tau,\omega,\eta)>0,$ independent of $\varepsilon$, such that for all $t\geq T_5$ and $k\geq P_4$,
			\begin{align}\label{ep12}
				&\frac{1}{\alpha}\int_{\tau-t}^{\tau}e^{\alpha (s-\tau)}[\z(s,\vartheta_{-\tau}\omega)]^2 \int_{\R^d}\upxi\left(\frac{|x|^2}{k^2}\right)|\f(x,s)|^2\d x\d s\nonumber\\&\leq\frac{1}{\alpha}\int_{\tau-t}^{\tau}e^{\alpha (s-\tau)}[\z(s,\vartheta_{-\tau}\omega)]^2 \int\limits_{|x|\geq k}|\f(x,s)|^2\d x\d s\nonumber\\&\leq\frac{1}{\alpha}\int_{-\infty}^{0}\int\limits_{|x|\geq k}e^{\alpha s}[\z(s+\tau,\vartheta_{-\tau}\omega)]^2 |\f(x,s+\tau)|^2\d x\d s\leq \frac{\eta}{5},
			\end{align}
			where we have used the fact that $\f\in\mathrm{L}^2_{\mathrm{loc}}(\R;\H)$ and satisfying \eqref{forcing4}. 
			
			Let $\mathscr{T}^*=\mathscr{T}^*(\tau,\omega,D,\eta)=\max\{T_1,T_2,T_3,T_4,T_5\}, P^*=P^*(\tau,\omega,\eta)=\max\{P_1,P_2,P_3,P_4\}$, then it follows from \eqref{ep7}-\eqref{ep12} that, for all $t\geq \mathscr{T}^*$ and $k\geq P^*$, we obtain
			\begin{align*}
				\int_{\R^d}\upxi\left(\frac{|x|^2}{k^2}\right)|\v_{\varepsilon}(\tau,\tau-t,\vartheta_{-\tau}\omega,\v_{\varepsilon,\tau-t})|^2\d x\leq \eta,
			\end{align*}
			which implies \eqref{ep}.
		\end{proof}
		
	\end{appendix}

	\medskip\noindent
	{\bf Acknowledgments:}    The first author would like to thank the Council of Scientific $\&$ Industrial Research (CSIR), India for financial assistance (File No. 09/143(0938)/2019-EMR-I).  M. T. Mohan would  like to thank the Department of Science and Technology (DST), Govt of India for Innovation in Science Pursuit for Inspired Research (INSPIRE) Faculty Award (IFA17-MA110).


\begin{thebibliography}{99}
		\bibitem{SNA}	S.N. Antontsev and H.B. de Oliveira, The Navier-Stokes problem modified by an absorption term, \emph{Applicable Analysis}, {\bf 89}(12),  2010, 1805-1825. 
		\bibitem{Arnold}	L. Arnold, \emph{Random Dynamical Systems}, Springer-Verlag, Berlin, Heidelberg, New York, 1998.
		\bibitem{Ball} J. M. Ball, Global attractors for damped semilinear wave equations, \emph{Discrete and Continuous Dynamical Systems - Series B}, \textbf{10} (2004), 31-52.
		
		
		\bibitem{HBAM}		H. Bessaih and A. Millet,	On stochastic modified 3D Navier–Stokes equations with anisotropic viscosity, \emph{Journal of Mathematical Analysis and Applications}, 462 (2018), 915--956. 
		
		%\bibitem{Ball1} J. M. Ball, Continuity properties and global attractors of generalized semiflows and the Navier-Stokes equations, Journal of Nonlinear Science, \textbf{7} (1997), 475-502.
		%\bibitem{BLL}	P. Bates, H. Lisei and K. Lu, Attractors for stochastic lattice dynamical systems, \emph{Stochastic and Dynamics} \textbf{6} (1) (2006) 1-21.
		\bibitem{BCF}  	Z. Brze\'zniak, M. Capi\'nski and F. Flandoli, Pathwise global attractors for stationary random dynamical systems, \emph{Probability Theory and Related Fields}, \textbf{95} (1993), 87-102.
		
		\bibitem{ZBGD}  Z. Brze\'zniak and Gaurav Dhariwal, Stochastic tamed Navier-Stokes equations on $\mathbb{R}^3$: the existence and the uniqueness of solutions and the existence of an invariant measure,  \emph{Journal of Mathematical Fluid Mechanics}, {\bf 22}, Article number: 23 (2020).
		
		\bibitem{ZCQJ} 	Z. Cai and Q. Jiu, Weak and Strong solutions for the incompressible Navier-Stokes equations with damping, \emph{Journal of Mathematical Analysis and Applications}, {\bf 343} (2008), 799--809.
		
		
		\bibitem{CLR_USC} T. Caraballo, J. A. Langa and J. C. Robinson, Upper semicontinuity of attractors for small random perturbations of dynamical systems, \emph{Communications in Partial Differential Equations}, \textbf{23}(9-10), 1998, 1557-1581.
		\bibitem{CLR} T. Caraballo, G. Lukaszewicz and J. Real, Pullback attractors for asymptotically compact non-autonomous dynamical systems, \emph{Nonlinear Analysis: Theory, Methods \& Applications} \textbf{64}(3) (2006), 484-498.
		\bibitem{CLR1} T. Caraballo, G. Lukaszewicz and J. Real, Pullback attractors for non-autonomous 2D-Navier–Stokes equations in some unbounded domains, \emph{Comptes Rendus Mathematique}, \textbf{342} (4) (2006), 263-268.
		\bibitem{CLR2} A. Carvalho, J. A. Langa and J. Robinson, \emph{Attractors for Infinite-dimensional Non-autonomous Dynamical Systems}, Netherlands: Springer New York, 2013.
		\bibitem{CZ} P. Chen and X. Zhang, Upper semi-continuity of attractors for non-autonomous fractional stochastic parabolic equations with delay, \emph{Discrete and Continuous Dynamical Systems Series B}, \textbf{26}(8) (2021), 4325-4357.
		\bibitem{CDF}	H. Crauel, A. Debussche, F. Flandoli, Random attractors, \emph{Journal of Dynamics and Differential Equations} \textbf{9}(2)(1995), 307-341.
		\bibitem{CF}	H. Crauel and F. Flandoli, Attractors for random dynamical systems, \emph{Probability Theory and Related Fields}, \textbf{100}(1994), 365-393.
		%	\bibitem{ICh} I. Chueshov, \emph{Dynamics of Quasi-Stable Dissipative Systems}, Springer-Verlag, 2015.
		\bibitem{HCPEK}	H. Cui and P. E. Kloeden, Convergence rate of random attractors for 2D Navier-Stokes equation towards the deterministic singleton attractor,  Chapter 10 in \emph{Contemporary Approaches
			and Methods in Fundamental Mathematics and
			Mechanics}, Springer, 2021. 
		\bibitem{FY}	X. Feng and B. You, Random attractors for the two-dimensional stochastic g-Navier-Stokes equations, \emph{Stochastics}, \textbf{92}(4) (2020), 613-626.
		\bibitem{FS} F. Flandoli and B. Schmalfuss, Random attractors for the 3D stochastic Navier- Stokes equation with multiplicative white noise, \emph{Stochastics and Stochastic Reports}, \textbf{59}(1-2) (1996), 21-45.
		\bibitem{FHR} 	C. L. Fefferman, K. W. Hajduk and J. C. Robinson,	\emph{Simultaneous approximation in Lebesgue and Sobolev norms via eigenspaces}, https://arxiv.org/abs/1904.03337.
		%\bibitem{DFHM} D. Fujiwara, H. Morimoto, An $L^r$-theorem of the Helmholtz decomposition of vector fields, \emph{J. Fac. Sci. Univ. Tokyo Sect. IA Math.,} {\bf 24}(1977),	685-700. 
		\bibitem{GGW} A. Gu, B. Guo and B. Wang, Long term behavior of random Navier-Stokes equations driven by colored noise, \emph{Discrete and Continuous Dynamical Systems - Series B}, \textbf{25}(7) (2020), 2495-2532.
		
		
		\bibitem{HR}	K. W. Hajduk and J. C. Robinson, Energy equality for the 3D critical convective Brinkman-Forchheimer equations, \emph{Journal of Differential Equations}, {\bf 263}, 2017, 7141-7161.
		
		
		\bibitem{HZ} Z. Han and S. Zhou, Random exponential attractor for the 3D non-autonomous stochastic damped Navier–Stokes equation, \emph{Journal of Dynamics and Differential Equations}, (2021).
		\bibitem{XJXD}	X. Jia and X. Ding, Random attractors for stochastic retarded 2D-Navier-Stokes equations with additive noise, \emph{J. Funct. Spaces }, (2018), Art. ID 3105239, 14 pp.
		\bibitem{KT2}  V. K. Kalantarov and S. Zelik, Smooth attractors for the Brinkman-Forchheimer equations with fast growing nonlinearities, \emph{Commun. Pure Appl. Anal.}, {\bf 11}	(2012) 2037-2054.
		\bibitem{KM} K. Kinra and M. T. Mohan, Random attractors for 2D stochastic convective Brinkman-Forchheimer equations in some unbounded domains, \emph{Submitted}, \url{https://arxiv.org/pdf/2010.08753.pdf}.
		\bibitem{KM1} K. Kinra and M. T. Mohan, Existence and upper semicontinuity of random attractors for the 2D stochastic convective Brinkman-Forchheimer equations in bounded domains, \emph{Submitted}, \url{https://arxiv.org/pdf/2011.06206.pdf}.
	%	\bibitem{KM2} K. Kinra and M. T. Mohan, $\H^1$-random attractors for the 2D stochastic convective Brinkman-Forchheimer equations in some unbounded domains, \emph{Submitted}.
		\bibitem{KM3} K. Kinra and M. T. Mohan, Large time behavior of the deterministic and stochastic 3D convective Brinkman-Forchheimer equations in periodic domains, \emph{Submitted}, \url{https://arxiv.org/pdf/2012.13555.pdf}.
		\bibitem{KM4} K. Kinra and M. T. Mohan, Weak pullback mean random attractors for the stochastic convective Brinkman-Forchheimer equations and locally monotone stochastic partial differential equations, \emph{Submitted}, \url{https://arxiv.org/pdf/2012.12605.pdf}.
		\bibitem{KM5} K. Kinra and M. T. Mohan, Convergence of random attractors towards deterministic singleton attractor for 2D and 3D convective Brinkman-Forchheimer equations, \emph{Submitted}, \url{https://arxiv.org/pdf/2103.02154.pdf}.
		%\bibitem{LCL}Y. Li, H. Cui and J. Li, Upper semi-continuity and regularity of random attractors on p-times integrable spaces and applications, \emph{Nonlinear Analysis}, \textbf{109}, 2014, 33–44.
		\bibitem{LF} L. Liu and X. Fu, Existence and upper semicontinuity of $(L^2, L^q)$ pullback attractors for a stochastic p-laplacian equation, \emph{Communications on Pure \& Applied Analysis}, \textbf{6} (2), 2017, 443-474.
		
		
		\bibitem{WLMR} W. Liu and M. R\"ockner,	Local and global well-posedness of SPDE with generalized coercivity conditions, \emph{Journal of Differential Equations}, {\bf 254} (2013), 725--755. 
		
		\bibitem{WL}  W. Liu, Well-posedness of stochastic partial differential equations with Lyapunov condition, \emph{Journal of Differential Equations}, {\bf 255} (2013), 572--592. 
		
		%\bibitem{FLYB}	 F. Li, and B. You,  Random attractor for the stochastic Cahn-Hilliard-Navier-Stokes system with small additive noise, \emph{Stoch. Anal. Appl.}, {\bf 36}(3) (2018), 546-559. 
		\bibitem{LHGH1}	H. Liu and H. Gao, Stochastic 3D Navier–Stokes equations with nonlinear damping: martingale solution, strong solution and small time LDP, Chapter 2 in \emph{Interdisciplinary Mathematical Sciences Stochastic PDEs and Modelling of Multiscale Complex System}, 9-36, 2019.
		\bibitem{LG}	H. Liu and H. Gao, Ergodicity and dynamics for the stochastic 3D Navier-Stokes equations with damping, \emph{Communications in Mathematical Sciences}, \textbf{16}(1) (2018), 97-122.
		\bibitem{LW} X. Liu and Y. Wang, Pullback attractors for nonautonomous 2D-Navier-Stokes models with variable delays, \emph{Abstract and Applied Analysis}, \textbf{2013} (2013), Article ID 425031, 10 pages.
		\bibitem{PAM}	P.A. Markowich, E.S. Titi and S. Trabelsi,	Continuous data assimilation for the three-dimensional Brinkman-Forchheimer-extended Darcy model, \emph{Nonlinearity}, {\bf 29}(4), 2016, 1292-1328.
		\bibitem{MTM}	 M. T. Mohan, On the convective Brinkman-Forchheimer equations, \emph{Submitted}.
		\bibitem{MTM1}	 M. T. Mohan, Stochastic convective Brinkman-Forchheimer equations, \emph{Submitted}, \url{https://arxiv.org/abs/2007.09376}.
		\bibitem{MTM2}	M. T. Mohan, Asymptotic analysis of the 2D convective Brinkman-Forchheimer equations in unbounded domains: Global attractors and upper semicontinuity, \emph{Submitted}, \url{https://arxiv.org/abs/2010.12814}.
		
		%\bibitem{MTM3} M. T. Mohan, The $\H^1$-compact global attractor for the two dimentional convective Brinkman-Forchheimer equations in unbounded domains, \emph{Journal of Dynamical and Control Systems}, 2021,  DOI: \url{http://dx.doi.org/10.1007/s10883-021-09545-2}.
		
		\bibitem{MTSS} 	M. T. Mohan, and S. S Sritharan, 	Stochastic Euler equations of fluid dynamics with L\'evy noise,	\emph{Asymptot. Anal.}, {\bf 99} 1--2 (2016), 67--103.
		
		\bibitem{JCR}	J. C. Robinson, \emph{Infinite-Dimensional Dynamical Systems: An Introduction to Dissipative Parabolic PDEs and the Theory of Global Attractors}, Cambridge University Press, 2001. 
		%\bibitem{RRS}		J.C. Robinson,  J.L. Rodrigo and  W. Sadowski, \emph{The Three-Dimensional Navier–Stokes equations, Classical Theory}, Cambridge Studies in Advanced Mathematics, Cambridge University Press, Cambridge, UK, 2016.
		\bibitem{MRXZ} 	M. R\"ockner and X. Zhang, Tamed 3D Navier-Stokes equation: existence, uniqueness and regularity, \emph{Infinite Dimensional Analysis, Quantum Probability and Related Topics}, {\bf 12}(4), 2009,  525-549.
		\bibitem{MRXZ1}	M. R\"ockner and X. Zhang, Stochastic tamed 3D Navier-Stokes equation: existence, uniqueness and ergodicity, \emph{Probability Theory and Related Fields}, {\bf 145} (2009) 211--267.
		\bibitem{Rosa} R. Rosa, The global attractor for the 2D Navier-Stokes flow on some unbounded domains, \emph{Nonlinear Analysis: Theory, Methods \& Applications}, \textbf{32} (1998), 71-85.
		%	\bibitem{Slavik} J. Slav\'ik, Attractors for stochastic Reaction-Diffusion equation with additive homogeneous noise, \emph{Czechoslovak Mathematical Journal}, \textbf{71} (2021), 21-43.
		\bibitem{R.Temam}	R. Temam, \emph{Infinite-Dimensional Dynamical Systems in Mechanics and Physics}, vol. 68, Applied Mathematical Sciences,
		Springer, 1988.
		\bibitem{Temam1}	R. Temam, \emph{Navier-Stokes Equations and Nonlinear Functional Analysis}, Second Edition, CBMS-NSF
		Regional Conference Series in Applied Mathematics, 1995.
		\bibitem{Wang}B. Wang, Upper semicontinuity of random attractors for non-compact random dynamical systems, \emph{Electronic Journal of Differential Equations}, \textbf{2009} (2009), 1–18.
		\bibitem{SandN_Wang} B. Wang, Sufficient and necessary criteria for existence of pullback attractors for non-compact random dynamical systems, \emph{Journal of Differential Equations}, \textbf{253} (5) (2012), 1544-1583.
		\bibitem{PeriodicWang} B. Wang, Periodic random attractors for stochastic Navier-Stokes equations on unbounded domain, \emph{Electronic Journal of Differential Equations}, \textbf{2012} (59) (2012), 1-18.
		\bibitem{non-autoUpperWang} B. Wang, Existence and upper semicontinuity
		of attractors for stochastic equations with deterministic non-autonomous terms, \emph{Stochastics and Dynamics}, \textbf{14} (4) (2014), 1450009.
		%\bibitem{RB} R. Wang and B. Wang, Random dynamics of p-Laplacian lattice systems driven by infinite-dimensional nonlinear noise, \emph{Stochastic Processes and their Applications}, \textbf{130} (12) (2020), 7431-7462.
		\bibitem{RB1} R. Wang and B. Wang, Random dynamics of non-autonomous fractional stochastic p-Laplacian equations on $\mathbb{R}^N$, \emph{Banach Journal of Mathematical Analysis}, \textbf{15}(1), 19 (2021).
		%\bibitem{WLYJ}J. Wang, C. Li, L. Yang and M. Jia, Upper semi-continuity of random attractors and existence of invariant measures for nonlocal stochastic Swift-Hohenberg equation with multiplicative noise, arXiv preprint arXiv:2012.00271 (2020).
		%\bibitem{Yao} X. Yao, Existence of a random attractor for non-autonomous stochastic plate equations with additive noise and nonlinear damping on $\R^d$, \emph{Boundary Value Problems}, \textbf{2020}, 49 (2020). 
		\bibitem{You}	B. You, The existence of a random attractor for the three dimensional damped Navier-Stokes equations with additive noise, \emph{Stochastic Analysis and Applications}, \textbf{35}(4) (2017), 691-700.
		\bibitem{ZZXW}	Z. Zhang, X. Wu and M. Lu, On the uniqueness of strong solution to the incompressible Navier-Stokes equations with damping, \emph{Journal of Mathematical Analysis and Applications}, {\bf 377} (2011), 414--419.
		
		\bibitem{ZZ} W. Zhao and Y. Zhang, Upper semi-continuity of random attractors for a non-autonomous dynamical system with a weak convergence condition, \emph{Acta Mathematica Scientia}, \textbf{40} (2020), 921–933.
		
		
		
		\bibitem{ZW} S. Zhou and Z. Wang, Upper-semicontinuity of attractors for reaction-diffusion equation and damped wave equation in $\R^d$ perturbed by small multiplicative noises, \emph{Dynamic Systems \& Applications}, \textbf{22} (1), 2013, 15–31.
		
		
		\bibitem{YZ} 	Y. Zhou, Regularity and uniqueness for the 3D incompressible Navier-Stokes equations with damping, \emph{Applied Mathematics Letters}, {\bf 25} (2012), 1822--1825.
		
		\bibitem{ZXZZ}  K. Zhu, Y. Xie, F. Zhou and Q. Zhou, Pullback attractors for p-Laplacian equations with delays, \emph{Journal of Mathematical Physics}, \textbf{62}, 022702 (2021).
		
		
		
	\end{thebibliography}
\end{document}